\documentclass{article}
\usepackage{econometrics}

\usepackage{arxiv}
\usepackage[utf8]{inputenc} 
\usepackage[T1]{fontenc}    
\usepackage[table,xcdraw, dvipsnames]{xcolor}
\usepackage[colorlinks=true, linkcolor=blue, urlcolor=blue, citecolor=blue]{hyperref}
\usepackage{url}            
\usepackage{booktabs}       
\usepackage{amsfonts, amsmath, amssymb, bm}       
\usepackage{nicefrac}       
\usepackage{microtype}      
\usepackage{lipsum}		
\usepackage{graphicx}
\usepackage{natbib}
\usepackage{doi}
\usepackage{algorithm}
\usepackage{algpseudocode}
\usepackage{tikz}
\usepackage{econometrics}
\usepackage{breqn}
\usepackage{enumerate}
\usepackage{subfig}
\usepackage[autostyle=false, style=english]{csquotes}
\usepackage{booktabs} 
\MakeOuterQuote{"}
\usepackage{amsthm}
\usepackage[toc,page]{appendix}

\theoremstyle{plain}
\newtheorem{theorem}{Theorem}

\newtheorem{lemma}{Lemma}

\theoremstyle{definition}

\newtheorem{remark}{Remark}

\title{Random Walks with Traversal Costs: Variance-Aware Performance Analysis and Network Optimization}

\author{ \href{https://orcid.org/0009-0008-1891-2346}{\includegraphics[scale=0.06]{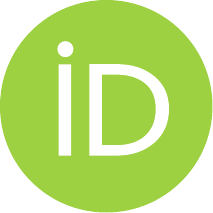}\hspace{1mm}Thao~Le}\\
	Department of Economics\\
	Vrije Universiteit Amsterdam\\
	\texttt{\href{mailto:t.p.t.le@vu.nl}{t.p.t.le@vu.nl}} \\
	\And
	\href{https://orcid.org/0009-0007-4945-6120}{\includegraphics[scale=0.06]{orcid.pdf}\hspace{1mm}Robbert~van~der~Burg} \\
	Department of Mathematics\\
	Vrije Universiteit Amsterdam\\
	\texttt{\href{mailto:r.vander.burg@vu.nl}{r.vander.burg@vu.nl}} \\
	\And
	\href{https://orcid.org/0000-0002-3389-2311}{\includegraphics[scale=0.06]{orcid.pdf}\hspace{1mm}Bernd~Heidergott} \\
	Department of Operations Analytics\\
	Vrije Universiteit Amsterdam\\
	\texttt{\href{mailto:b.f.heidergott@vu.nl}{b.f.heidergott@vu.nl}}
    \And
	\href{https://orcid.org/0000-0003-4072-3820}{\includegraphics[scale=0.06]{orcid.pdf}\hspace{1mm}Ines~Lindner} \\
	Department of Economics\\
	Vrije Universiteit Amsterdam\\
	\texttt{\href{mailto:i.d.lindner@vu.nl}{i.d.lindner@vu.nl}}
    \And
	\href{https://orcid.org/0000-0001-6585-4785}{\includegraphics[scale=0.06]{orcid.pdf}\hspace{1mm}Alessandro~Zocca} \\
	Department of Mathematics\\
	Vrije Universiteit Amsterdam\\
	\texttt{\href{mailto:a.zocca@vu.nl}{a.zocca@vu.nl}}
}

\renewcommand{\headeright}{ }
\renewcommand{\shorttitle}{Random Walks with Traversal Costs}

\hypersetup{
pdftitle={Random Walks with Traversal Costs},
pdfauthor={Thao Le, Robbert van der Burg, Bernd Heidergott, Ines Lindner, Alessandro Zocca}
}

\begin{document}
\maketitle

\begin{abstract}
	We introduce weighted Markovian graphs, a random walk model that decouples the transition dynamics of a Markov chain from (random) edge weights representing the cost of traversing each edge. This decoupling allows us to study the accumulated weight along a path independently of the routing behavior. Crucially, we derive closed-form expressions for the mean and variance of weighted first passage times and weighted Kemeny constants, together with their partial derivatives with respect to both the weight and transition matrices. These results hold for both deterministic and stochastic weights with no distributional assumptions. We demonstrate the framework through two applications, highlighting the dual role of variance. In surveillance networks, we introduce the surprise index, a coefficient-of-variation metric quantifying patrol unpredictability, and show how maximizing it yields policies that are both efficient and hard to anticipate. In traffic networks subject to cascading edge failures, we develop a minimal-intervention framework that adjusts speed limits to preserve connectivity under three increasingly flexible regulatory policies.
\end{abstract}

\keywords{Markov chains, weighted Markovian graphs, weighted random walks, weighted Kemeny constant, first passage times, constrained network optimization, network resilience}

\section{Introduction}
Graphs provide a natural framework for modeling complex systems, from social networks and urban traffic to communication infrastructure and the architectures underlying neural networks.
By representing entities as nodes and their interactions as edges, graphs enable a rigorous study of connectivity and dynamics.
A central analytical tool in this context is the \textit{random walk}, a stochastic process on the graph's nodes governed by a Markov chain with transition matrix $\mP$.
Random walks have a rich history dating back to \cite{pearson} and \cite{mccrea1936problem}, and have since found widespread application. The most prominent application is the PageRank algorithm \cite{brin1998anatomy,page1999pagerank}. Its variants have been applied in various settings ranging from web search \cite{fogaras2005towards,haveliwala2003topic,pan2004automatic,shen2014lazy} to complex social networks analysis \cite{jackson,sarkar2011random}; for a comprehensive overview, we refer to \cite{masuda2017random}.


Beyond the stationary distribution, the mean first passage times (MFPTs) of a random walk are a prominent tool in network analysis.
MFPTs quantify the expected number of steps for a walker to travel from one node to another, providing a natural notion of ``distance'' between nodes.
This interpretation has made MFPTs valuable for community detection (\cite{newman2004finding,zhang2013random}), social networks analysis (\cite{berkhout2019analysis,ballal2022network}), and cluster analysis in data science
(\cite{fortunato2010community,von2007tutorial}).
A quantity closely related to MFPTs is the Kemeny constant, which aggregates MFPTs into a single measure of overall network connectivity. Recent work has used the Kemeny constant for edge centrality analysis to identify bottleneck roads in traffic networks \cite{altafini2023edge} and to analyze its behavior under network reduction \cite{bini2024kemeny}.

Standard random walks track only \textit{where} a walker goes, not \textit{what it costs} to get there.
In many real-world systems, the probability of traversing a link and the cost (time, energy, monetary expense) associated with that traversal are distinct quantities.
Consider, for instance, a logistics network: a delivery driver may have a high probability of choosing a highway route, but that route may incur significant tolls or fuel costs.
Similarly, in communication networks, a routing protocol may prefer stable links, yet the quantities of interest are total latency or packet loss accumulated along a path.
In these settings, knowing the transition structure alone is insufficient; we must also account for the weights associated with each transition.


To address this gap, we study a non-standard random-walk model that assigns (random) weights to the edges, representing the cost of traversing each edge. 
While MFPTs count the mean number of steps from node $i$ to node $j$, the mean weighted first passage time (MWFPT) captures the mean weight accumulated over the first passage from node $i$ to $j$.
Closed-form solutions for MFPTs and the variance of first passage times (VFPTs) were established in the seminal work of \cite{kemeny1976finite}.
Alternative expressions for MFPTs and VFPTs for doubly stochastic chains can be found in \cite{borkar2012hamiltonian}, and for distributional results for lattice walks, we refer to \cite{lawler2010random}.
A closed-form expression for MWFPTs appears in \cite{patel2015robotic}. However, the algebraic form of that expression does not lend itself to efficient gradient computation, motivating the alternative formulation we develop in Section~\ref{sec:mvwpt}. Specifically, in our derivation, we use the fundamental matrix as a generalized inverse rather than the one used in \cite{patel2015robotic}, obtaining an expression for the MWFPT that is linear in the weights, a structural property enabling all subsequent gradient analysis.

In this paper, we also provide closed-form solutions for both MWFPT and the variance of the weighted first passage times (VWFPT), together with their partial derivatives with respect to $\mP$ and the first and second moments of the weights.
Building on the gradient-search optimization framework for Markov chains developed in \cite{franssen2025first}, we leverage these closed-form gradient expressions to optimize objectives that jointly account for mean performance and variability, a capability not available through standard MFPT-based approaches.


We demonstrate the versatility of this framework through two applications.
In \textit{traffic networks}, where nodes represent crossings and edges represent road segments, the transition matrix $\mP$ encodes the driver's routing behavior, while weights capture travel times.
We model the network's response to cascading edge failures, a setting also studied in the context of power grid resilience \citep{Bienstock_Verma_2010}, and show how optimally adjusting speed limits (i.e., edge weights) can preserve connectivity between critical nodes, where
connectivity is expressed via MWFPTs and VWFPTs. 
This application complements recent work on Kemeny-constant-based edge centrality for identifying critical infrastructure links \citep{altafini2023edge,bini2024kemeny}.

In \textit{surveillance networks}, the framework reveals a conceptual inversion of the classical mean-variance paradigm.
Although variance typically represents risk to be minimized \cite{Markowitz1952}, in adversarial patrol settings, unpredictability is an asset: a low-variance patrol schedule is exploitable by an intruder who can time their actions to coincide with predictable coverage gaps \cite{duan2021stochastic}.
Existing Markov chain-based surveillance strategies optimize either for efficiency, by minimizing the (weighted) Kemeny constant \cite{patel2015robotic,franssen2025first}; or for unpredictability, by maximizing the entropy rate \cite{george2018markov,duan2019markov}; see \cite{duan2021markov} for a survey.
By jointly optimizing the MWFPT and VWFPT, we derive patrolling strategies that maintain efficient coverage (i.e., low mean revisit time) while maximizing the variability of arrival times, providing a rigorous foundation for designing unpredictable security protocols.


The main contributions of this paper are as follows:
\begin{enumerate}
\item Using the fundamental matrix as a generalized inverse, we provide closed-form expressions for MWFPT and VWFPT.

\item We also derive partial derivatives of MWFPT and VWFPT with respect to the weight matrix $\mW$ and transition matrix $\mP$, requiring only finite first and second moments, and no distributional assumptions needed. These expressions accommodate both deterministic and stochastic weights.

\item We establish the smoothness of MWFPT and VWFPT with respect to the transition matrix $\mP$. Together with (ii), these results confirm that network objectives involving weighted first passage times are amenable to gradient-based optimization.

\item We demonstrate the versatility of our model through two numerical applications. First, in a surveillance network, we exploit VWFPTs to maximize patrol unpredictability, illustrating the novel role of variance as an asset rather than a risk measure. Second, in a traffic network subjected to cascading edge failures, we optimize travel-time connectivity under degradation.

\end{enumerate}


The remainder of the paper is organized as follows.
The non-standard weighted random walk model is presented in detail in Section~\ref{sec:wmp}, together with the mean and variance of the weighted first passage time.
Closed-form expressions for these quantities are derived in Section~\ref{sec:mvwpt}.
Partial derivatives with respect to mean weights and $\mP$ are established in Section~\ref{sec:gradw}.
Numerical examples that demonstrate the versatility of our approach are presented in Section~\ref{sec:app}. We conclude with an outlook for further research.

\section{The Weighted Markovian Graph Model}\label{sec:wmp}
This section establishes the weighted random walk framework utilized throughout our subsequent analysis. We explore three time-related physical interpretations of the weight matrix and formally define the mean weighted first passage time matrix. We ultimately derive two expressions for the weighted Kemeny constant to serve as objective measures of global connectivity.

\subsection{Definitions and Motivating Examples}\label{sec:ime}

The standard random walk is defined on a graph $ \hat {\cal G} = (\sV, \sE) $ with node set $ \sV = \{ 1 , \ldots , n \} $ and edge set $ \sE \subseteq \sV \times \sV $ with $|\sE|=m$, governed by a Markov transition matrix $ \mP$ on $ \sV$, dictating the probability of moving from one node to another in such a way that $\mP(i,j) \geq  0$ for $(i,j) \in \sE$ and $\mP(i,j) = 0$, otherwise, and $ \sum_j \mP ( i , j ) =1$ for all $ i \in \sV$. The Markov chain associated with $ \mP$ models a random walk on $ \hat {\cal G}$, and we refer to $ {\cal G} =  ( \sV, \sE , \mP )$ as a \textit{Markovian graph}.
For ease of presentation, we assume throughout the paper that $ {\cal G}$ is strongly connected and, therefore, $ \mP$ irreducible. Moreover, unless otherwise stated, we assume $ \mP$ to be aperiodic.

We augment the standard random walk by adding (random) weights $ {\cal W} \in \mathbb{R}^{n \times n} $ associated with the edges of the graph ${\cal G}$, such that $ {\cal W}  (i,j)  \in [ \epsilon , \infty )$ for all $ (i  , j ) \in \sE$, for $ \epsilon > 0 $, and zero otherwise.
These weights $ {\cal W} (i,j) $ are generally interpreted as the weight or cost associated with the random walker's transition from node $ i $ to node $j$.
We refer to the resulting structure $ {\cal G} =  ( \sV, \sE , \mP , {\cal W })$ as \textit{weighted Markovian graph}. Here, we explicitly decouple the transition dynamics governed by $\mP$ from the weights ${\cal W}$, which we allow to be an arbitrary matrix of costs.

In applications, it is particularly interesting to consider scenarios in which these weights model time in the system, where ``time'' is a placeholder for the more general concept of ``cost''.
 
We distinguish three primary application scenarios based on the temporal interpretation of $\mathcal{W}$:
\begin{enumerate}
\item {\bf Holding times:} In this scenario, the weight $\mathcal{W}(i, j) = \mathcal{H}(i, j)$ represents the holding time at node $i$ before the instantaneous transition to node $j$. 
The random walker moves from $i$ to $j$ with probability $\mathbf{P}(i, j)$ but remains at node $i$ for $\mathcal{H}(i, j)$ time units before an instantaneous jump.\\
\textit{{Example (marketing):} Let $\mathcal{V}$ be a set of products. $\mathbf{P}(i, j)$ is the probability that a customer who bought the product $i$ next buys the product $j$, and $\mathcal{H}(i, j)$ is the time the customer stays loyal to product $i$ before switching.}

\item {\bf Travel times:} Here, the weight $\mathcal{W}(i, j) = \mathcal{T}(i, j)$ represents the traversal time required for the transition itself.
The transition from node $i$ to $j$ is not instantaneous but takes $\mathcal{T}(i, j)$ time units.\\
\textit{{Example (road network):} In a road network, $\sV$ might represent a set of intersections where $\mathbf{P}(i, j)$ is the probability that a driver will move from intersection $i$ to $j$, and $\mathcal{T}(i, j)$ is the travel time between the intersections.}

\item {\bf Hybrid times:} In this scenario, the weight is described as $\mathcal{W}(i, j) = \mathcal{H}(i, j) + \mathcal{T}(i, j)$. This interpretation combines the two preceding concepts, allowing separate modeling of the wait time at a node and the travel time between nodes.
The total time associated with the transition is the sum of the holding time, $\mathcal{H}(i, j)$, and the traversal time, $\mathcal{T}(i, j)$.\\
\textit{{Example (road network):} $\mathcal{H}(i, j)$ represents the waiting time at intersection $i$ (e.g., at a red light) on the path to $j$, while $\mathcal{T}(i, j)$ is the actual travel time from $i$ to $j$.}
\end{enumerate}

\vspace{0.05cm}

A path $ \rho $ on $ {\cal G} = (\sV, \sE, \mP, {\cal W} )$ from $ i$ to $ j $ is a finite sequence $\rho = \{(i_1,j_1), \dots, (i_l,j_l)\}$ of links $ ( i_k , j_k ) \in \sE$, for $ 1 \leq k \leq l $, with $ j_k = i_{k+1}$, for $ 1 \leq k \leq l-1 $, $ i_1 = i$ and $ j_l = j $. We denote by $ l (  \rho )$ the length of path $ \rho$.
A path $ \rho $ from $ i $ to $ j $ is called a \textit{first passage path} if $ i_k \not = j$ for $ k=2, \ldots , l$, that is, if $ \rho $ is a finite path going from $ i $ and $ j $ that does not visit $ j$ before reaching the end node.
We denote the set of first passage paths from $ i $ to $ j $ by $ {\cal P} ( i , j ) $.
The probability of observing $ \rho \in  {\cal P} ( i , j ) $ is given by
\[
p ( \rho ) : = \prod_{k=1}^{ l ( \rho )} \mP ( i_k , j_k ) ,
\]
and the mean first-passage-path \textit{length} is now given by
\[
\mL ( i , j ) :=  \sum_{ \rho \in {\cal P} ( i , j ) }
l ( \rho ) \, p ( \rho ) .
\]
The accumulated weight along a path $ \rho $ is given by
\[
r ( \rho ) := \sum_{k=1}^{ l ( \rho )} {\cal W} ( i_k , j_k ) , 
\]
and the \textit{mean first-passage-path weight} is now given by
\begin{dmath*}
{\mM ( i , j ) := \mM_{\cal W} ( i , j ) }= \sum_{ \rho \in {\cal P} ( i , j ) }
r ( \rho ) \, p ( \rho ) 
 = \sum_{ \rho \in {\cal P} ( i , j ) }
\left ( \sum_{k=1}^{ l ( \rho )} {\cal W} ( i_k , j_k ) \right ) \prod_{k=1}^{ l ( \rho )} \mP ( i_k , j_k ) \,  .
\end{dmath*}
Setting all weights to one, $ \mM  $ recovers $\mL $; in formula, $ \mM_{ {\bf 1} {\bf 1}^\top} = \mL $, where ${\bf 1}$ denotes a column vector of ones, its dimension being specified by the context.
The \textit{variance of the first-passage-path weight} is given by 
\begin{dmath*}
{\mV ( i , j )  :=
\mV_{\cal W}(i,j)} = \left ( \sum_{ \rho \in {\cal P} ( i , j ) }
( r ( \rho ) )^2 \, p ( \rho )  \right ) -  \big (\mM_{\cal W}(i,j)  \big)^2  .
\end{dmath*}
Throughout the paper, we make the following assumption:

\vspace{0.2cm}

\begin{description}
    \item[{\bf (A)} ] \textit{The Markov chain $ \mP$ has a unique stationary distribution $ \vpi$. The weights have finite mean and second moment, denoted by $ \mW(i,j) = \mathbb{E} [ {\cal W} (i,j) ] < \infty$ and $ \mW^{(2)} ( i , j ) = \mathbb{E} [ {\cal W}^2 ( i , j ) ] < \infty $, $ ( i , j ) \in \sE$.}
       
\end{description}

\vspace{0.2cm}

It should be noted that assumption {\bf (A)} allows the weights ${\cal W}$ to be deterministic, in formula {$ \mW (i,j) = {\cal W} (i,j) $} for some $ ( i , j ) $. 

\subsection{Random Walk Analysis}
Let  $ \{ \hat X_k : k \geq 0 \} $ denote 
a discrete-time random walk on $ \hat {\cal G}$ with transition probabilities $  \mP(i,j) $.
Let 
\begin{align*}
    \hat \tau ( i , j ) = \inf &\{ n \geq 1 : \hat X_{k+n} = j, \hat X_{k+l} \neq  j \\
    & \text{ for } 1 \leq l \leq n-1, \hat X_k = i \}
\end{align*}
be the first passage time of $ \hat X_k$ from $ i $ to $ j$.
A random walker on $ \hat {\mathcal G}$ including these weights can now be defined by
\[
X_t = \hat X_{t_k} ,  \text{ for } t_k \leq t < t_{k+1}, \quad
\text{where } t_k = \sum_{i=0}^k \tau_i ,
\]
with $ \tau_0 = 0 $, and 
\[
\tau_k = {\cal W} ( \hat X_k , \hat X_{k+1} ) ,
\]
for $ k \geq 0 $.

Due to our assumption that $ {\cal W}( i , j ) > 0 $ for $ ( i , j ) \in \sE$,
we may interpret $  {\cal W}( i , j )$ as holding times, and the resulting ``continuous time'' process $ \{ X_t : t \geq 0 \}$ becomes a semi-Markov process.
By construction, the transition probabilities of $ X_t $ are given by
\[
\mP ( i , j ) = \mathbb{E} \left [  \left . \mathbb{I}_{ X_{ t_{k+1}} =j } \right |  X_{{t_k } = i } \right ] , \qquad 1 \leq i , j \leq n,
\]
and we refer to $ \hat X_k = X_{ t_k } $ as the \textit{embedded jump chain} of $ \{ X_t : t \geq 0 \} $ governed by $ \mP$.
In accordance with {\bf (A)}, $ \{ X_t : t \geq 0 \} $ has a unique stationary distribution $ \vpi$.
Furthermore, the accumulated weight along the first passage path $ \hat \tau ( i , j ) $ from $ i $ to $ j$ is given by
\begin{equation*}
\tau ( i , j ) =   \sum_{k=1}^{\hat \tau ( i , j ) }  \tau_k = \sum_{k=1}^{\hat \tau ( i , j ) }  {\cal W} ( \hat X_k , \hat X_{k+1} ),
\end{equation*}
provided that $  \hat X_k  = i$.
From the definitions, it readily follows that $ \mM ( i , j ) = \mathbb{E} [ \tau ( i , j ) ]$.

Let $\bar{U}(i)$ be the expected weight of state $i$ before transitioning to another state: 
\begin{align*}
    \bar{U}(i) 
       & = \sum_{k \in \sV} \mP(i,k) \, \E[ {\cal W} ( i , k  )  | X_{t-1}=i, X_t=k]\\
       & = \sum_{k \in \sV} \mP(i,k) \, \mW ( i,k) \\
       & = [(\mP \circ \mW) \mathbf{1}]_i.
\end{align*}
where $ \circ$ denotes the element-wise matrix product.
Then, the steady state probability, or long-run fraction of time spent in state $i$, is equal to:
\begin{equation}\label{pi_W}
    \vpi_{\cal W} (i) = \frac{\vpi(i) \bar{U}(i)}{\sum_{k \in \sV} \vpi(k) \bar{U}(k)}.
\end{equation}

It is easy to see that for given $ \mP$ (and thus given $ \vpi$),
any alternative stationary distribution $\vpi_{\cal W } $ with  $\vpi_{\cal W } >0$ can be achieved by choosing the appropriate mean weights $ \mW$. 
This illustrates additional flexibility offered by the weighted random walk. 

\subsection{Weighted Kemeny Constants}

The \textit{Kemeny constant} is a well-known connectivity measure of Markov chains, see \cite{kemeny1976finite} for an early reference, and is given by
\[
K ( \mP) = \sum_{ j \in \sV } \mL ( i , j  ) \vpi ( j ) ,
\]
for arbitrary $ i \in \sV$.
The quantity  $ K ( \mP )$ has been shown to be indeed independent of $ i $ in
\cite{kemeny1976finite}, but their result lacks an intuitive explanation.
In this paper, we will work with the
algebraically equivalent form 
\[
K ( \mP) = \sum_{ i , j \in \sV }  \vpi ( i ) \mL ( i , j  ) \vpi ( j ) .
\]
Recall that in the random walk model, $ \vpi ( i )$ expresses the (relative) importance of node $i$.
In words, $ \vpi ( i ) \mL ( i , j  ) \vpi ( j )$ thus weights the mean path length it takes the random walker to go from $ i $ to $j $ weighted by the importance of $ i $ and $ j $,
Hence, $ K ( \mP )$ is small if the paths between the important nodes are short.
The Kemeny constant is related to the \textit{effective graph resistance} or \textit{Kirchhoff index} given by 
\begin{equation*}
 R = \frac{1}{ 2| \sE |}  \sum_{i < j} \big (  \mL ( i , j ) + \mL ( j,i )  \big ).
\end{equation*}
The effective graph resistance is used as a proxy for robustness in the design and control of power transmission networks and is explored in-depth in \cite{ellens_2011}.

Replacing $ \mL$ by $ \mM$ and $ \vpi $ by $ \vpi_{\cal W}$, gives the following version of the Kemeny constant of weighted Markovian graphs  
\begin{equation}\label{eq:fok}
K_{\cal W} ( \mP, {\cal W}) = \sum_{i, j \in \sV }  \vpi_{\cal W} ( i ) \mM( i , j  ) \vpi_{\cal W} ( j ) .
\end{equation}
An equivalent, though less insightful, expression for $ K_{\cal W} ( \mP, {\cal W}) $ is ${\rm tr}(\mZ) \cdot \vpi (\mP \circ \mW) {\bf 1} $, for a proof we refer to Lemma~\ref{le:linK} later in the text.

Alternatively, we may define the Kemeny constant through the stationary distribution of the discrete-time random walk $ \vpi$:
\begin{dmath}\label{travel_Kemeny_def}
K  ( \mP, {\cal W}) = \sum_{i, j \in \sV }  \vpi ( i ) \mM( i , j  ) \vpi ( j ) .
\end{dmath}
The difference between $K_{ \cal W}  ( \mP, {\cal W}) $ and $ K  ( \mP, {\cal W}) $ lies in the choice of stationary distribution. The distinction between these two Kemeny constants has important implications for optimization, which we discuss in Section~\ref{sec:KK}.

The appropriate connectivity measure depends on the physical dynamics of the system: $K_{\cal W} (\mP, {\cal W})$ is the superior metric for holding-time models because it directly incorporates the duration $\bar U(i)$ spent in each state, whereas $K(\mP,{\cal W})$ better reflects connectivity in travel-time models where discrete event frequencies and routing volumes are the primary concern.
Moreover, as we show in Section~\ref{sec:KK}, $K_{ \cal W}  ( \mP, {\cal W}) $ and $ K  ( \mP, {\cal W}) $ behave fundamentally different in terms of their dependency on $ \mW$. For applications of $K_{ \cal W}  ( \mP, {\cal W}) $ and $ K  ( \mP, {\cal W}) $, we refer to Section~\ref{sec:app}. 

\section{Mean and Variance of Weighted First Passage Times}\label{sec:mvwpt}

We denote by $[\cdot]_{\rm{dg}}$ the operator that sets all off-diagonal elements of a given matrix to zero. For a given vector $ \vx$, the matrix $ {\rm{dg}} (\vx ) $ is defined as having diagonal elements $( {\rm{dg}} (\vx )) ( i,i ) = \vx ( i )  $ and zeros elsewhere.
\cite{kemeny1976finite} showed that
\begin{dmath}\label{eq:ks}
\mL = ( \mI - \mZ + {\bf 1} {\bf 1}^\top [ \mZ]_{ \rm{dg}} ) \Xi^{-1} ,
\end{dmath}
where we define $\Xi := {\rm dg}(\vpi)$ such that $\Xi^{-1} =  [ \mL ]_{ \rm{dg}}$, 
\begin{dmath*}\label{eq:propertyZ}
 \mZ = (\mI - \mP + \Pi )^{-1}
 =
 \sum_{ k=0}^\infty ( \mP - \Pi)^k = \Pi + \sum_{ k=0}^\infty ( \mP^k  - \Pi)  ,
\end{dmath*}
and $ \Pi = {\bf 1} \vpi $, also referred to as the \textit{ergodic projector}.
The matrix $ \mZ $ is called the \textit{group inverse} or \textit{fundamental matrix}, and its existence follows from finiteness and irreducibility of the embedded chain $ \mP$.
For an overview of various ways of computing $ \mL $, we refer to \cite{hunter2018computation}.
Specifically, we explain the relation of $ \mL$ and the so-called {\em taboo} kernel in Section~\ref{sec:taboo} in the Appendix.

The following theorem shows that $ \mM$ also admits a closed-form solution.

\begin{theorem}[MWFPT formula]\label{le:1}
If (A) holds, then the mean first-passage path weights satisfy
\[
 [ \mM ]_{\rm{dg}}  = \pi  ( \mP \circ \mW ) {\bf 1} [ \mM ]_{\rm{dg}} 
 = \pi  ( \mP \circ \mW ) {\bf 1} \Xi^{-1} ,
\]
and 
\begin{dmath*}
\mM = 
\Big ( 
\mZ  ( \mP \circ \mW ) \Pi 
-
{\bf 1} {\bf 1}^\top [ \mZ ( \mP \circ \mW ) \Pi ]_{\rm{dg}}
+
\pi ( \mP \circ \mW ) {\bf 1}
( \mI - \mZ + {\bf 1} {\bf 1}^\top [ \mZ]_{\rm{dg}}
\Big  )\Xi^{-1} .
\end{dmath*}
\end{theorem}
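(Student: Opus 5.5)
The plan is to derive a linear matrix equation for $\mM$ by first-step analysis, and then solve it with $\mZ$ playing the role of a generalized inverse of $\mI-\mP$, exactly as in Kemeny and Snell's derivation of \eqref{eq:ks}.

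\textbf{First-step equation.} Condition on the first transition out of $i$. If the walker jumps to $j$, it accrues $\mathcal W(i,j)$ and stops. If it jumps to $k\neq j$, it accrues $\mathcal W(i,k)$ and the walk restarts from $k$. Taking expectations (only first moments enter, so (A) suffices) gives
\[
\mM(i,j)=\sum_k \mP(i,k)\mW(i,k)+\sum_{k\neq j}\mP(i,k)\mM(k,j).
\]
With $\vr:=(\mP\circ\mW){\bf 1}$, in matrix form this reads
\[
\mM=\vr{\bf 1}^\top+\mP\big(\mM-[\mM]_{\rm dg}\big),
\qquad\text{i.e.}\qquad
(\mI-\mP)\mM=\vr{\bf 1}^\top-\mP[\mM]_{\rm dg}.
\]
Finiteness of $\mM$ follows because $\mP$ is irreducible on a finite space, so first passage times have geometric tails, and the weights have finite means.

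\textbf{Diagonal.} Multiply on the left by $\vpi$. Since $\vpi(\mI-\mP)=0$, this gives $0=\vpi\vr\,{\bf 1}^\top-\vpi[\mM]_{\rm dg}$. Hence $\vpi(j)\mM(j,j)=\vpi\vr$, so
\[
[\mM]_{\rm dg}=\vpi\vr\,\Xi^{-1}=\vpi\vr\,[\mM]_{\rm dg}\Xi\,\Xi^{-1}.
\]
This is the first claim, using $\Xi^{-1}=[\mL]_{\rm dg}$ (and reading the middle expression as the scalar $\vpi\vr$ times the diagonal matrix).

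\textbf{Solving for $\mM$.} Substitute the diagonal back in, so the right-hand side becomes $\vr{\bf 1}^\top-\vpi\vr\,\mP\Xi^{-1}$. Use $\mZ(\mI-\mP)=\mI-\Pi$ to get
\[
\mM=\Pi\mM+\mZ\vr{\bf 1}^\top-\vpi\vr\,\mZ\mP\Xi^{-1}.
\]
The term $\Pi\mM={\bf 1}\vpi\mM$ is a matrix of constant columns ${\bf 1}{\bf c}^\top$. It is determined by matching diagonals with $[\mM]_{\rm dg}=\vpi\vr\,\Xi^{-1}$.

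Now rewrite in the target form:
\begin{itemize}
\item $\mZ\mP=\mZ-\mI+\Pi$, since $\mZ(\mI-\mP+\Pi)=\mI$ and $\mZ\Pi=\Pi$.
\item $\Pi\Xi^{-1}={\bf 1}{\bf 1}^\top$.
\item $\mZ\vr{\bf 1}^\top=\mZ(\mP\circ\mW)\Pi\,\Xi^{-1}$, because $\Pi\Xi^{-1}={\bf 1}{\bf 1}^\top$.
\end{itemize}
This yields
\[
\mM={\bf 1}{\bf c}^\top+\Big(\mZ(\mP\circ\mW)\Pi+\vpi\vr(\mI-\mZ)\Big)\Xi^{-1}-\vpi\vr\,{\bf 1}{\bf 1}^\top,
\]
where the last term can be absorbed into ${\bf 1}{\bf c}^\top$.

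The constant-column part is then fixed by matching the diagonal. It must equal ${\bf 1}{\bf 1}^\top\big(\vpi\vr[\mZ]_{\rm dg}-[\mZ(\mP\circ\mW)\Pi]_{\rm dg}\big)\Xi^{-1}$, since $[\vpi\vr(\mI-\mZ)\Xi^{-1}]_{\rm dg}$ must combine with it to give $\vpi\vr\,\Xi^{-1}$. This is exactly the stated formula.

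\textbf{Main obstacle.} The delicate step is the bookkeeping that identifies the undetermined column-constant term through the diagonal condition. Because $\mI-\mP$ is singular, $\mZ$ only inverts it modulo $\Pi$, so care is needed there. As a final check, I would verify that setting $\mW={\bf 1}{\bf 1}^\top$ (so $\vr={\bf 1}$, $\vpi\vr=1$, $\mZ\Pi=\Pi$, $[\Pi]_{\rm dg}=\Xi$) collapses the formula to \eqref{eq:ks}.
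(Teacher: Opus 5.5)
Your proposal is correct and takes essentially the same route as the paper. You use the first-step equation $(\mI-\mP)\mM=(\mP\circ\mW){\bf 1}{\bf 1}^\top-\mP[\mM]_{\rm dg}$, left-multiply by $\vpi$ to get the diagonal, premultiply by $\mZ$ using $\mZ(\mI-\mP)=\mI-\Pi$, and fix the row-constant term ${\bf 1}{\bf c}^\top$ from the known diagonal. Your explicit identification ${\bf c}^\top=\vpi\mM$ makes the paper's consistency check redundant. Your use of $\mZ\mP=\mZ-\mI+\Pi$ replaces the paper's identity for ${\bf 1}{\bf 1}^\top-\mZ\mP+{\bf 1}{\bf 1}^\top[\mZ\mP]_{\rm dg}$.

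One slip should be fixed. With $C=\vpi(\mP\circ\mW){\bf 1}$, the displayed equality $C\,\Xi^{-1}=C\,[\mM]_{\rm dg}\Xi\,\Xi^{-1}$ is false unless $C=1$, because $[\mM]_{\rm dg}\Xi=C\,\mI$. The middle expression in the statement is a typo for $C\,[\mL]_{\rm dg}$; the paper's own proof uses $\vpi(i)=1/\mL(i,i)$. Say so explicitly rather than forcing an identity.
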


\noindent
{\bf Proof:}
The mean first-passage path weight satisfies the following recursive relation
\begin{dmath*}
\mM(i,j)  =  \mP ( i , j )  \mW ( i ,j ) +
\sum_{k \not = j }\mP ( i , k ) \big  ( \mW ( i ,k )  + \mM ( k , j )  \big )  =  - \mP ( i , j ) \mM ( j , j )  + \sum_{k  }\mP ( i , k )  \big ( \mW ( i ,k )  + \mM ( k , j ) \big ) 
  ,
\end{dmath*}
which in matrix notation reads as
\begin{equation*}
\mM = -\mP [ M ]_{\rm{dg}} + ( \mP \circ \mW ) {\bf 1} {\bf 1}^\top + \mP \mM,
\end{equation*}
or, equivalently,
\begin{equation}\label{eq:op}
( \mI - \mP ) \mM = -\mP [ \mM ]_{\rm{dg}} + ( \mP \circ \mW ) {\bf 1} {\bf 1}^\top .
\end{equation}
Multiplying both sides with $ \pi$ and using the fact that $ \pi ( \mI - \mP ) = 0$, we get 
\begin{equation}\label{eq:pi diag M}
\pi \mP [ \mM ]_{\rm{dg}}  =  \pi  [ \mM ]_{\rm{dg}}  =  \vpi ( \mP \circ \mW ) {\bf 1} {\bf 1}^\top .
\end{equation}
Multiplying by $ \Xi^{-1} $ on the left, using that $ \pi (i ) = 1 / \mL ( i , i ) $ and that $ \pi  ( \mP \circ \mW ) {\bf 1}$ is a scalar, we get 
\begin{equation}\label{eq:diagM}
[ \mM ]_{\rm{dg}} = \pi  ( \mP \circ \mW ) {\bf 1} \Xi^{-1} .
\end{equation}
Thus, all the terms on the right-hand side of \eqref{eq:op} are known.

We now use the following result from linear algebra (see \cite{hunter1,hunter2,hunter3}). A matrix $ \mA^-$ is called the generalized inverse of $ \mA $ if $ \mA \mA^- \mA = \mA $. 
Using the generalized inverse, it is easily seen that $ \mX = \mA^- \mC + ( \mI - \mA^- \mA ) \mU $, where $ \mU $ is an arbitrary matrix, solves $ \mA \mX = \mC$, provided that $\mA^-$ is consistent, which means that $ \mA^-$ satisfies
$ (\mI - \mA \mA^- ) \mC = 0$.
A generalized inverse of $ ( \mI - \mP) $ is given by 
\[
\mZ = ( \mI - \mP   + \Pi )^{-1} .
\]
In \eqref{eq:op}, we identify $ \mA = \mI - \mP, C =-  \mP [ \mM ]_{\rm{dg}}  + ( \mP \circ \mW ) {\bf 1} {\bf 1}^\top $ and $ \mA^- = \mZ $.
First, we show that $ \mZ $ is consistent, that is, it solves
\begin{equation}\label{eq:zmp}
(\mI  - \mZ (\mI - \mP )) \Big ( -  \mP [ \mM ]_{\rm{dg}} + ( \mP \circ \mW ) {\bf 1} {\bf 1}^\top  \Big ) 
= 0 .
\end{equation}
By simple algebra, the following expression holds
\[
 (\mI  -\mZ (\mI - \mP ) ) = (\mI  - (\mI - \mP )\mZ)  = \Pi .
\]
Inserting this into \eqref{eq:zmp} and combining with \eqref{eq:pi diag M} yields
\begin{dmath}\label{eq:d}
\Pi \mP [ \mM ]_{\rm{dg}}   = 
\Pi  [ \mM ]_{\rm{dg}}  =  
\Pi ( \mP \circ \mW ) {\bf 1} {\bf 1}^\top  .
\end{dmath}
By \eqref{eq:diagM} and noting that $ \pi ( \mP \circ \mW) {\bf 1}  $ is a scalar, it holds that for the left-hand side of \eqref{eq:d}
\begin{dmath*}
\Pi  [ \mM ]_{\rm{dg}}  =
\Pi  ( \pi ( \mP \circ \mW) {\bf 1} ) \Xi^{-1}  =
  ( \pi ( \mP \circ \mW) {\bf 1} )  \Pi \Xi^{-1}  =
\pi ( \mP \circ \mW) {\bf 1} ( {\bf 1} {\bf 1}^\top ) ,
\end{dmath*}
and, since $ \Pi$ has rows equal to $ \vpi $, for the right-hand side of \eqref{eq:d} it holds that
\[
 \Pi ( \mP \circ \mW ) {\bf 1} {\bf 1}^\top 
 = \pi ( \mP \circ \mW) {\bf 1} ( {\bf 1} {\bf 1}^\top ).
\]
This establishes the consistency of $ \mZ$, and shows that $\mZ $ is a generalized inverse of $ \mI -\mP $.

Next, we premultiply \eqref{eq:op} by $ \mZ $, and using the fact that $\mZ $ is a generalized inverse of $ \mI -\mP $ we get 
\begin{dmath*}
 \mM = \mZ ( ( \mP \circ \mW ) {\bf 1} {\bf 1}^\top    - \mP [ \mM ]_{\rm{dg}} )  + \Pi \mU.
\end{dmath*}
Since $ \Pi$ has equal rows, so has $ \Pi \mU $ and we can write
\begin{dmath}\label{eq:uu}
 \mM = 
\mZ \big ( ( \mP \circ \mW ) {\bf 1} {\bf 1}^\top    - \mP [ \mM ]_{\rm{dg}} \big )  + {\bf 1} \vu^\top ,
\end{dmath}
for some vector $ \vu $. It remains to solve $ \vu$. 
From \eqref{eq:uu} we obtain
\begin{dmath*}
{\rm{dg}} (\vu )  =  [ \mM ]_{\rm{dg}}  
- [ \mZ (( \mP \circ \mW ) {\bf 1} {\bf 1}^\top ]_{\rm{dg}}   + [ \mZ \mP ]_{\rm{dg}} [ \mM ]_{\rm{dg}} . 
\end{dmath*}
Recall that
\[
[ \mM ]_{\rm{dg}}  = \pi ( \mP \circ \mW ) {\bf 1} \Xi^{-1} ,
\]
which gives
\[
[ \mZ \mP ]_{\rm{dg}} [ \mM ]_{\rm{dg}} =
\pi ( \mP \circ \mW ) {\bf 1} [ \mZ \mP ]_{\rm{dg}} \Xi^{-1}   .
\]
With this, deduce
\begin{dmath*}
{\rm{dg}} (\vu ) = \pi ( \mP \circ \mW ) {\bf 1} \Xi^{-1} 
- [ \mZ (  \mP \circ \mW ) \Pi \Xi^{-1} ]_{\rm{dg}}
+  \pi ( \mP \circ \mW ) {\bf 1} [ \mZ \mP ]_{\rm{dg}} \Xi^{-1} =
\left ( \pi ( \mP \circ \mW ) {\bf 1}  
- [ \mZ ( ( \mP \circ \mW ) \Pi ]_{\rm{dg}}
+  \pi ( \mP \circ \mW ) {\bf 1} [ \mZ \mP ]_{\rm{dg}} \right  )\Xi^{-1} .
\end{dmath*}
This leads to the following representation of $ \mU$:
\begin{dmath*}
\Pi \mU = 
{\bf 1} {\bf 1}^\top  \left ( \pi ( \mP \circ \mW ) {\bf 1}  
- [ \mZ ( ( \mP \circ \mW ) \Pi ]_{\rm{dg}}
+  \pi ( \mP \circ \mW ) {\bf 1} [ \mZ \mP ]_{\rm{dg}} \right  )\Xi^{-1} .
\end{dmath*}
Moreover,
\[
{\bf 1} {\bf 1}^\top  = \Pi \Xi^{-1}  .
\]
Inserting the above equality into \eqref{eq:uu} gives
\begin{dmath}
\mM =  \nonumber
\mZ \big ( ( \mP \circ \mW ) {\bf 1} {\bf 1}^\top- \mP [ \mM ]_{\rm{dg}} \big )  +  {\bf 1} {\bf 1}^\top
\left ( \pi ( \mP \circ \mW ) {\bf 1}  
- [ \mZ (  \mP \circ \mW ) \Pi ]_{\rm{dg}}
+    \pi ( \mP \circ \mW ) {\bf 1} [ \mZ \mP ]_{\rm{dg}} \right  )\Xi^{-1} \nonumber
=
\mZ \big ( ( \mP \circ \mW ) \Pi \Xi^{-1}  - \pi ( \mP \circ \mW ) {\bf 1}  \mP \Xi^{-1}
 +  {\bf 1} {\bf 1}^\top
\left ( \pi ( \mP \circ \mW ) {\bf 1}  
- [ \mZ ( ( \mP \circ \mW ) \Pi ]_{\rm{dg}}
+    \pi ( \mP \circ \mW ) {\bf 1} [ \mZ \mP ]_{\rm{dg}} \right  )\Xi^{-1}   \nonumber
= \Big (\mZ  ( \mP \circ \mW ) \Pi   - \pi ( \mP \circ \mW ) {\bf 1}  \mZ \mP 
 +  {\bf 1} {\bf 1}^\top
\left ( \pi ( \mP \circ \mW ) {\bf 1}  
- [ \mZ (  \mP \circ \mW ) \Pi ]_{\rm{dg}}
+    \pi ( \mP \circ \mW ) {\bf 1} [ \mZ \mP ]_{\rm{dg}} \right  )  \Big ) \Xi^{-1} \nonumber
= \Big (\mZ  ( \mP \circ \mW ) \Pi  
-  {\bf 1} {\bf 1}^\top  [ \mZ  ( \mP \circ \mW ) \Pi ]_{\rm{dg}}
+\pi ( \mP \circ \mW ) {\bf 1}
\big ( {\bf 1} {\bf 1}^\top 
-  \mZ \mP +
 [ \mZ \mP ]_{\rm{dg}} \big  )  \Big ) \Xi^{-1}. \label{eq:all1}
\end{dmath}
By some further algebra, it can be shown that
\[
{\bf 1} {\bf 1}^\top - \mZ \mP  + [ \mZ \mP ]_{\rm{dg}}
= \mI - \mZ + {\bf 1} {\bf 1}^\top [ \mZ ]_{\rm{dg}}  .
\]
Inserting this into \eqref{eq:all1}, we arrive at the final representation:
\begin{dmath}\label{finalized_M_formula}
\mM = 
\Big ( 
\mZ  ( \mP \circ \mW ) \Pi 
-
{\bf 1} {\bf 1}^\top [ \mZ ( \mP \circ \mW ) \Pi ]_{\rm{dg}}
+
\pi ( \mP \circ \mW ) {\bf 1}
( \mI - \mZ + {\bf 1} {\bf 1}^\top [ \mZ]_{\rm{dg}}
\Big  )\Xi^{-1} .
\end{dmath}
\qed

\begin{remark}\label{re:2}\textit{
Elaborating on $ (\mI - \mP +(\mP \circ \mW ) {\bf 1} \vpi)^{-1} $ as generalized inverse of $ \mI - \mP $, an alternative expression for $ \mM$ is obtained in \cite{patel2015robotic}. However, choosing $ \mZ $ for the generalized inverse allows us to derive an expression for $ \mM $ that is linear in $ \mW $. This simplicity of $ \mM$ in terms of $ \mW $ enables the further analysis presented in this paper.}
\end{remark}

Let $\mM^{(2)} ( i , j ) :=\mM_{{\cal W}}^{(2)} ( i , j )  $ denote the second moment of the first passage time $ \tau ( i , j ) $.
Then, 
\begin{dmath*} \label{eq: variance}
    \mV_{{\cal W}}(i,j)   = \mM^{(2)} ( i , j ) - ( \mM ( i , j ) )^{2}
\end{dmath*}
is an expression for the variance of the first passage times.

The following theorem provides a closed-form expression for this variance. We need the following additional assumption of the weight matrix for our variance result.
\begin{description}
    \item[{\bf (A$^*$)} ] \textit{The sequence of weight matrices $ \{({\cal W} (i,j ; n ))_{ i , j } : n \geq 1 \}  $ are i.i.d.}
\end{description}
Assumption (A$^*$) entails that in the case of stochastic weights, the cost of traversing an edge is drawn from the same underlying distribution each time it is visited, independent of prior traversals. This naturally accommodates applications where the memoryless property holds, such as telecommunication networks, where packet delays vary due to independent background noise, or logistics networks, where shipping times fluctuate based on daily conditions.

\begin{theorem}[Second moment of weighted first passage times]\label{th: second momemt}
Provided that (A) and (A$^*$) hold, the second moment of the first passage time is equal to:
\begin{dmath*}
\mM^{(2)} ( i , i )  
= \frac{ 
\vpi ( \mP \circ \mW^{(2)} )  {\bf 1}
+
\big ( 2 \vpi  (\mP   \circ  \mW ) \big (   \mM - [ \mM]_{ \rm{dg}}  \big )  
\big )  ( i )  }{ \vpi ( i ) },
\end{dmath*}
where  
\begin{dmath*}
\mM^{(2)} = 
\mZ ( \mP \circ \mW^{(2)} ) {\bf 1} {\bf 1}^\top
-
{\bf 1} {\bf 1}^\top \Big[ \mZ ( \mP \circ \mW^{(2)}) {\bf 1} {\bf 1}^\top \Big] 
 +
2 \Big( \mZ ( \mP \circ \mW) \big( \mM - [ \mM]_{ \rm{dg}}\big)
-
{\bf 1} {\bf 1}^\top \Big[ \mZ ( \mP \circ \mW) \big( \mM - [ \mM]_{ \rm{dg}}\big) \Big]_{\rm{dg}} \Big)
+
\big( \mI - \mZ + {\bf 1} {\bf 1}^\top [\mZ]_{\rm{dg}} \big) \big[ \mM^{(2)}\big]_{\rm{dg}}\, . 
\end{dmath*}
\end{theorem}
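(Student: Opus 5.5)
We mirror the proof of Theorem~\ref{le:1}: derive a first-step recursion for the second moment, solve for the diagonal by multiplying with $\vpi$, and then solve the full system with $\mZ$ as a generalized inverse of $\mI-\mP$.

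\textbf{First-step recursion.} Condition on the first jump $i\to k$. If $k=j$, then $\tau(i,j)={\cal W}(i,j)$. If $k\neq j$, then $\tau(i,j)={\cal W}(i,k)+\tau'(k,j)$, where $\tau'(k,j)$ is a fresh first passage time from $k$ to $j$. By (A$^*$) and the strong Markov property of the embedded chain, $\tau'(k,j)$ is independent of ${\cal W}(i,k)$. Squaring and taking expectations gives
\[
\mM^{(2)}(i,j)=\sum_k \mP(i,k)\mW^{(2)}(i,k)+2\sum_{k\neq j}\mP(i,k)\mW(i,k)\mM(k,j)+\sum_{k\neq j}\mP(i,k)\mM^{(2)}(k,j).
\]
In matrix form this reads
\[
(\mI-\mP)\mM^{(2)}=(\mP\circ\mW^{(2)}){\bf 1}{\bf 1}^\top+2(\mP\circ\mW)(\mM-[\mM]_{\rm dg})-\mP[\mM^{(2)}]_{\rm dg}.
\]
This has exactly the structure of \eqref{eq:op}. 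The only change is that the known inhomogeneous term is now $\mC:=(\mP\circ\mW^{(2)}){\bf 1}{\bf 1}^\top+2(\mP\circ\mW)(\mM-[\mM]_{\rm dg})$, with $\mM$ given by Theorem~\ref{le:1}.

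\textbf{Diagonal.} Premultiply by $\vpi$ and use $\vpi(\mI-\mP)=0$ and $\vpi\mP=\vpi$. This gives $\vpi[\mM^{(2)}]_{\rm dg}=\vpi\mC$. Reading off the $i$-th entry yields $\vpi(i)\mM^{(2)}(i,i)=\vpi(\mP\circ\mW^{(2)}){\bf 1}+2\big(\vpi(\mP\circ\mW)(\mM-[\mM]_{\rm dg})\big)(i)$, which is the first claim.

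\textbf{Full matrix.} With the diagonal known, the right-hand side is fully known. Consistency of $\mZ$ holds because $(\mI-\mZ(\mI-\mP))=\Pi$, and $\Pi(\mC-\mP[\mM^{(2)}]_{\rm dg})=0$ is precisely the diagonal identity just derived. Hence
\[
\mM^{(2)}=\mZ\mC-\mZ\mP[\mM^{(2)}]_{\rm dg}+{\bf 1}\vu^\top .
\]
We determine $\vu$ by matching diagonals. This gives ${\rm dg}(\vu)=[\mM^{(2)}]_{\rm dg}-[\mZ\mC]_{\rm dg}+[\mZ\mP]_{\rm dg}[\mM^{(2)}]_{\rm dg}$, and therefore
\[
\mM^{(2)}=\mZ\mC-{\bf 1}{\bf 1}^\top[\mZ\mC]_{\rm dg}+\big({\bf 1}{\bf 1}^\top-\mZ\mP+{\bf 1}{\bf 1}^\top[\mZ\mP]_{\rm dg}\big)[\mM^{(2)}]_{\rm dg}.
\]
It remains to simplify the bracket. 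Using $\mZ\mP=\mZ-\mI+\Pi$ and ${\bf 1}{\bf 1}^\top[\Pi]_{\rm dg}[\mM^{(2)}]_{\rm dg}=\Pi[\mM^{(2)}]_{\rm dg}$, the identity from the proof of Theorem~\ref{le:1} should turn the bracket into $\mI-\mZ+{\bf 1}{\bf 1}^\top[\mZ]_{\rm dg}$. Finally, split $\mZ\mC$ into its $\mW^{(2)}$ part and its $\mW$ part to obtain the stated formula. The first bracket in the statement should be read as $[\cdot]_{\rm dg}$.

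\textbf{Main obstacle.} The one genuinely delicate step is this bracket simplification. Unlike $[\mM]_{\rm dg}$, the matrix $[\mM^{(2)}]_{\rm dg}$ is not a scalar multiple of $\Xi^{-1}$, so the ${\bf 1}{\bf 1}^\top=\Pi\Xi^{-1}$ trick from Theorem~\ref{le:1} does not directly apply. I would instead verify the identity columnwise, expecting to need $\Pi[\mM^{(2)}]_{\rm dg}$ to be expressible in terms of $\Pi\mC$. If this fails, I would keep the unsimplified bracket, which is an equally valid closed form.
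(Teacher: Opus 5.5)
Your proposal is correct and follows the paper's proof essentially step for step: the first-step recursion using (A$^*$), premultiplication by $\vpi$ to get the diagonal, consistency of $\mZ$, and recovering ${\bf 1}\vu^\top$ by matching diagonals. You are also right to read the first bracket as $[\cdot]_{\rm dg}$. The step you flag as delicate is not an obstacle, because it is a pure matrix identity that never involves $[\mM^{(2)}]_{\rm dg}$. From $\mZ\mP=\mZ-\mI+\Pi$, $[\mZ\mP]_{\rm dg}=[\mZ]_{\rm dg}-\mI+\Xi$ and ${\bf 1}{\bf 1}^\top\Xi=\Pi$, the ${\bf 1}{\bf 1}^\top$ and $\Pi$ terms cancel and you get ${\bf 1}{\bf 1}^\top-\mZ\mP+{\bf 1}{\bf 1}^\top[\mZ\mP]_{\rm dg}=\mI-\mZ+{\bf 1}{\bf 1}^\top[\mZ]_{\rm dg}$ directly, so no relation between $\Pi[\mM^{(2)}]_{\rm dg}$ and $\Pi\mC$ is needed and your fallback is unnecessary.
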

The proof of this result elaborates on the generalized inverse and is analogous to the proof of Theorem~\ref{le:1}. For details, we refer to Section~\ref{sec:prooftheorem2} in the Appendix.

The mean weighted first passage times inspire the definition of the two versions $K_{ \cal W}  ( \mP, {\cal W}) $ and $ K  ( \mP, {\cal W}) $ for the Kemeny constant for a weighted Markovian Graph ${\cal G} = ( \mathbb{V} , \mathbb{E} , \mP , {\cal W })$.
Following the same line of thought, we now define
the \textit{second-order Kemeny constants} of 
the Markovian graph ${\cal G} $ as
\begin{equation}\label{eq:sok}
V_{\cal W} ( \mP, {\cal W}) := \sum_{i, j \in \sV }  \vpi_{\cal W} ( i ) \mV( i , j  ) \vpi_{\cal W} ( j ),
\end{equation}
and
\begin{equation}\label{sec-order Kemeny}
V  ( \mP, {\cal W}):= \sum_{i, j \in \sV }  \vpi ( i ) \mV( i , j  ) \vpi ( j ) .
\end{equation}
We refer to $K_{ \cal W}  ( \mP, {\cal W}) $ and $ K  ( \mP, {\cal W}) $ as
 \textit{first-order Kemeny constants}.
\color{black}

\section{Gradients with respect to the Mean Weight Matrix}
\label{sec:gradw}
In this section, we will derive closed-form expressions for the derivatives of $\vpi_{\cal W}$ and $\mM$. Moreover, we reflect on the relation of the choice of the weighted Kemeny constant to the nature of the application.

\subsection{Gradient Expressions}

We start by computing the gradient of $\vpi_{\cal W}$ with respect to the mean weights $\mW$.

\begin{lemma}[Derivative of $\vpi_{\cal W}$ with respect to $\mW$] \label{lem:derivative_pi_w}
Provided that (A) holds, the partial derivatives of the stationary weight distribution with respect to the mean weights are given by
\[
\frac{\partial \vpi_{ \cal W}(i)}{\partial \mW(l,k)} = 
\begin{cases}
\displaystyle\frac{\vpi(i) \mP(i,k)}{Y^2}\left(Y - \vpi(i)\bar{U}(i)\right) & \text{if } l = i , \\[10pt]
\displaystyle-\frac{\vpi(i)\bar{U}(i)\vpi(l)\mP(l,k)}{Y^2} & \text{if } l \neq i ,
\end{cases}
\]
where $Y = \sum_{j \in \sV} \vpi(j)\bar{U}(j)$ is a normalization constant.
\end{lemma}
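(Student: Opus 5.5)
The plan is a direct quotient-rule computation applied to the definition \eqref{pi_W}, $\vpi_{\cal W}(i) = \vpi(i)\bar U(i)/Y$ with $Y=\sum_{j\in\sV}\vpi(j)\bar U(j)$. The key structural observation is that under (A) the stationary distribution $\vpi$ of the embedded jump chain depends only on $\mP$, not on $\mW$. Hence when differentiating with respect to $\mW(l,k)$ we hold $\mP$ (and therefore $\vpi$) fixed. The only $\mW$-dependence then sits in the expected sojourn weights $\bar U(j) = \sum_{k'} \mP(j,k')\mW(j,k')$.

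The first step is to compute the elementary derivative
\[
\frac{\partial \bar U(j)}{\partial \mW(l,k)} = \mP(j,k)\,\mathbb{I}_{\{j=l\}} .
\]
This holds because $\bar U(j)$ is linear in the $j$-th row of $\mW$ and independent of the other rows. It follows that
\[
\frac{\partial Y}{\partial \mW(l,k)} = \vpi(l)\mP(l,k).
\]
This is the only place where the normalization couples the nodes.

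The second step applies the quotient rule:
\[
\frac{\partial \vpi_{\cal W}(i)}{\partial \mW(l,k)} = \frac{\vpi(i)\,\frac{\partial \bar U(i)}{\partial \mW(l,k)}\, Y - \vpi(i)\bar U(i)\,\vpi(l)\mP(l,k)}{Y^2}.
\]
We then split into the two cases of the statement.
\begin{itemize}
\item If $l=i$, the numerator is $\vpi(i)\mP(i,k)Y - \vpi(i)\bar U(i)\vpi(i)\mP(i,k)$. Factoring out $\vpi(i)\mP(i,k)$ gives the claimed $\vpi(i)\mP(i,k)\big(Y-\vpi(i)\bar U(i)\big)/Y^2$.
\item If $l\neq i$, the first term of the numerator vanishes, which leaves $-\vpi(i)\bar U(i)\vpi(l)\mP(l,k)/Y^2$.
\end{itemize}

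There is no real obstacle here. The one point requiring care is justifying that $\vpi$ is independent of $\mW$ and that $Y>0$, so the quotient is well defined and differentiable. For $Y>0$, note that $\bar U(j)\ge \epsilon>0$ because the weights on edges are at least $\epsilon$ and each row of $\mP$ sums to one, while irreducibility gives $\vpi>0$. If $(l,k)\notin\sE$, then $\mP(l,k)=0$ and both formulas correctly return zero.
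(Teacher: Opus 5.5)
Your proposal is correct and follows the same route as the paper's proof in the appendix: differentiate the numerator $\vpi(i)\bar U(i)$ and the normalizer $Y$ (only row $l$ of $\mW$ enters $\bar U$, giving $\partial Y/\partial \mW(l,k)=\vpi(l)\mP(l,k)$), apply the quotient rule, and split on $l=i$ versus $l\neq i$. Your added remarks that $\vpi$ depends only on $\mP$ and that $Y\ge\epsilon>0$ make explicit two points the paper uses without stating them.
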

The proof of the lemma is provided in Appendix~\ref{proof of lemma 4}.

Next, we compute the gradient of the MWFPTs with respect to the mean weights. The gradient is established in the following lemma,
where $\mathbf{e}_l$ denotes the $l$-th standard basis vector (column vector with 1 in position $l$, zeros elsewhere). 

\begin{lemma}[Derivative of First Moment, Mean Weights]\label{lem:derivative_mfpt}
Provided that (A) holds, the partial derivative of MWFPT with respect to $\mW(l,k)$ is:
\begin{dmath*}\label{eq:derivative_mfpt}
\frac{\partial \mM}{\partial \mW(l,k)} = \mP(l,k)\left[\mZ\mathbf{e}_l\mathbf{1}^\top - \mathbf{1}\mathbf{1}^\top [\mZ\mathbf{e}_l\mathbf{1}^\top]_{\rm{dg}} + \vpi(l)(\mI - \mZ + \mathbf{1}\mathbf{1}^\top [\mZ]_{\rm{dg}})\Xi^{-1}\right] .
\end{dmath*}
\end{lemma}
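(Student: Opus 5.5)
The plan is to differentiate the closed-form expression \eqref{finalized_M_formula} of Theorem~\ref{le:1} directly, term by term. The key point is that $\mZ$, $\Pi$, $\vpi$ and $\Xi$ depend only on $\mP$ and not on $\mW$. As stressed in Remark~\ref{re:2}, $\mM$ is therefore an affine-linear (in fact linear) function of $\mW$ through the single matrix $\mP\circ\mW$. Hence $\partial \mM/\partial \mW(l,k)$ is obtained by replacing $\mP\circ\mW$ in \eqref{finalized_M_formula} by its derivative
\[
\frac{\partial (\mP\circ\mW)}{\partial \mW(l,k)} = \mP(l,k)\,\mathbf{e}_l\mathbf{e}_k^\top ,
\]
since no other term carries $\mW$.

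I would then treat the three summands separately.

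For the first summand, $\mZ(\mP\circ\mW)\Pi\Xi^{-1}$ becomes $\mP(l,k)\,\mZ\mathbf{e}_l\mathbf{e}_k^\top\Pi\Xi^{-1}$. Because every row of $\Pi$ equals $\vpi$, we have $\mathbf{e}_k^\top\Pi=\vpi$, and $\vpi\Xi^{-1}=\mathbf{1}^\top$. So this term is $\mP(l,k)\,\mZ\mathbf{e}_l\mathbf{1}^\top$.

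For the second summand, $-\mathbf{1}\mathbf{1}^\top[\mZ(\mP\circ\mW)\Pi]_{\rm dg}\Xi^{-1}$, the operator $[\cdot]_{\rm dg}$ is linear and commutes with right multiplication by the diagonal matrix $\Xi^{-1}$. Its derivative is therefore
\[
-\mP(l,k)\,\mathbf{1}\mathbf{1}^\top[\mZ\mathbf{e}_l\vpi]_{\rm dg}\Xi^{-1} = -\mP(l,k)\,\mathbf{1}\mathbf{1}^\top[\mZ\mathbf{e}_l\mathbf{1}^\top]_{\rm dg}.
\]

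For the third summand, the scalar $\vpi(\mP\circ\mW)\mathbf{1}$ has derivative $\vpi\,\mP(l,k)\mathbf{e}_l\mathbf{e}_k^\top\mathbf{1}=\mP(l,k)\vpi(l)$. This gives $\mP(l,k)\vpi(l)(\mI-\mZ+\mathbf{1}\mathbf{1}^\top[\mZ]_{\rm dg})\Xi^{-1}$. Summing the three contributions yields the claimed formula.

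There is no real obstacle here beyond bookkeeping. The only delicate step is the manipulation $[\mA\Pi]_{\rm dg}\Xi^{-1}=[\mA\mathbf{1}\mathbf{1}^\top]_{\rm dg}$, which holds because $\Pi\Xi^{-1}=\mathbf{1}\mathbf{1}^\top$ and right multiplication by a diagonal matrix commutes with taking the diagonal part. I would also note that differentiability is immediate, since the map $\mW\mapsto\mM$ is linear.
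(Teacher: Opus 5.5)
Your proposal is correct and follows essentially the same route as the paper: differentiate the closed form of Theorem~\ref{le:1} term by term, using $\partial(\mP\circ\mW)/\partial\mW(l,k)=\mP(l,k)\mathbf{e}_l\mathbf{e}_k^\top$, $\partial\big(\vpi(\mP\circ\mW)\mathbf{1}\big)/\partial\mW(l,k)=\vpi(l)\mP(l,k)$, and $\vpi\Xi^{-1}=\mathbf{1}^\top$. Your explicit observation that $[\mA]_{\rm{dg}}\Xi^{-1}=[\mA\Xi^{-1}]_{\rm{dg}}$ for the diagonal matrix $\Xi^{-1}$ justifies the step in the second term, which the paper passes over without comment.
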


\noindent
{\bf Proof:}
The closed-form solution for $\mM$ using the fundamental matrix is:
\begin{dmath*}
    \mM = [\mZ(\mP \circ \mW)\mathbf{1}\vpi - \mathbf{1}\mathbf{1}^\top [\mZ(\mP \circ \mW)\mathbf{1}\vpi]_{\rm{dg}} + C(\mI - \mZ + \mathbf{1}\mathbf{1}^\top [\mZ]_{\rm{dg}})]\Xi^{-1} ,
\end{dmath*}
where $C = \vpi(\mP \circ \mW)\mathbf{1}$.
Taking the partial derivative with respect to $\mW(l,k)$:
\begin{dmath*}
    \frac{\partial \mM}{\partial \mW(l,k)} = \left[\mZ\frac{\partial(\mP \circ \mW)}{\partial \mW(l,k)}\mathbf{1}\vpi - \mathbf{1}\mathbf{1}^\top \left[\mZ\frac{\partial(\mP \circ \mW)}{\partial \mW(l,k)}\mathbf{1}\vpi\right]_{\rm{dg}} + \frac{\partial C}{\partial \mW(l,k)}(\mI - \mZ + \mathbf{1}\mathbf{1}^\top [\mZ]_{\rm{dg}})\right]\Xi^{-1} .
\end{dmath*}
Since $\frac{\partial(\mP \circ \mW)}{\partial \mW(l,k)}$ is a matrix with $\mP(l,k)$ in position $(l,k)$ and zeros elsewhere, we have $\frac{\partial(\mP \circ \mW)}{\partial \mW(l,k)}\mathbf{1} = \mP(l,k)\mathbf{e}_l$. Also, $\frac{\partial C}{\partial \mW(l,k)} = \vpi(l)\mP(l,k)$.
Substituting:
\begin{dmath*}
    \frac{\partial \mM}{\partial \mW(l,k)} = [\mZ\mP(l,k)\mathbf{e}_l\vpi - \mathbf{1}\mathbf{1}^\top [\mZ\mP(l,k)\mathbf{e}_l\vpi]_{\rm{dg}} + \vpi(l)\mP(l,k)(\mI - \mZ + \mathbf{1}\mathbf{1}^\top [\mZ]_{\rm{dg}})]\Xi^{-1} .
\end{dmath*}
Factoring out $\mP(l,k)$:
\begin{dmath*}
    \frac{\partial \mM}{\partial \mW(l,k)} = \mP(l,k)[\mZ\mathbf{e}_l\vpi - \mathbf{1}\mathbf{1}^\top [\mZ\mathbf{e}_l\vpi]_{\rm{dg}} + \vpi(l)(\mI - \mZ + \mathbf{1}\mathbf{1}^\top [\mZ]_{\rm{dg}})]\Xi^{-1}
\end{dmath*}
Now, note that $\mathbf{e}_l\vpi\Xi^{-1} = \mathbf{e}_l\mathbf{1}^\top$ since $\vpi\Xi^{-1} = \mathbf{1}^\top$. Therefore:
$ 
    \mZ\mathbf{e}_l\vpi\Xi^{-1} = \mZ\mathbf{e}_l\mathbf{1}^\top,
$ 
which gives as final result
\begin{dmath*}
    \frac{\partial \mM}{\partial \mW(l,k)} = \mP(l,k)\left[\mZ\mathbf{e}_l\mathbf{1}^\top - \mathbf{1}\mathbf{1}^\top [\mZ\mathbf{e}_l\mathbf{1}^\top]_{\rm{dg}} + \vpi(l)(\mI - \mZ + \mathbf{1}\mathbf{1}^\top [\mZ]_{\rm{dg}})\Xi^{-1}\right].
\end{dmath*}
\hfill $\square$

We now turn to the derivatives of the VWFPTs with respect to the mean weights.

\begin{theorem}[Derivative of VWFPTs with respect to $\mW$]\label{thm:variance_derivative}
Provided that (A) and (A$^*$) hold, the derivative of the VWFPTs with respect to $\mW(l,k)$ is:
\begin{equation*}\label{eq:variance_derivative}
\frac{\partial \mV_{\cal W} }{\partial \mW(l,k)} = \frac{\partial \mM^{(2)}}{\partial \mW(l,k)} - 2\mM \circ \frac{\partial \mM}{\partial \mW(l,k)} ,
\end{equation*}
where:
\begin{dmath}
\frac{\partial \mM}{\partial \mW(l,k)} = \mP(l,k)\left[\mZ\mathbf{e}_l\mathbf{1}^\top - \mathbf{1}\mathbf{1}^\top [\mZ\mathbf{e}_l\mathbf{1}^\top]_{\rm{dg}} + \vpi(l)(\mI - \mZ + \mathbf{1}\mathbf{1}^\top [\mZ]_{\rm{dg}})\Xi^{-1}\right] \label{eq:dMT}
\end{dmath}
and the derivatives of the second moment are:
\begin{dmath}
\frac{\partial \mM^{(2)}}{\partial \mW(l,k)} = \left[\mZ\frac{\partial(\mP \circ \mW^{(2)})}{\partial \mW(l,k)}\mathbf{1}\mathbf{1}^\top - \mathbf{1}\mathbf{1}^\top \left[\mZ\frac{\partial(\mP \circ \mW^{(2)})}{\partial \mW(l,k)}\mathbf{1}\mathbf{1}^\top\right]_{\rm{dg}}\right]
+ 2\mP(l,k)\left[\mZ\mathbf{e}_l\mathbf{e}_k^\top(\mM - [\mM]_{\rm{dg}}) - \mathbf{1}\mathbf{1}^\top [\mZ\mathbf{e}_l\mathbf{e}_k^\top(\mM - [\mM]_{\rm{dg}})]_{\rm{dg}}\right]  
+ 2\left[\mZ(\mP \circ \mW)\left(\frac{\partial \mM}{\partial \mW(l,k)} - \left[\frac{\partial \mM}{\partial \mW(l,k)}\right]_{\rm{dg}}\right)
- \mathbf{1}\mathbf{1}^\top \left[\mZ(\mP \circ \mW)\left(\frac{\partial \mM}{\partial \mW(l,k)} - \left[\frac{\partial \mM}{\partial \mW(l,k)}\right]_{\rm{dg}}\right)\right]_{\rm{dg}}\right]
+ (\mI - \mZ + \mathbf{1}\mathbf{1}^\top [\mZ]_{\rm{dg}})\frac{\partial [\mM^{(2)}]_{\rm{dg}}}{\partial \mW(l,k)} . \label{eq:dM2T}
\end{dmath}
The diagonal derivative appearing in~\eqref{eq:dM2T} can be computed explicitly as:
\begin{dmath}
\frac{\partial \mM^{(2)}(i,i)}{\partial \mW(l,k)} = \frac{\vpi\frac{\partial(\mP \circ \mW^{(2)})}{\partial \mW(l,k)}\mathbf{1}}{\vpi(i)}   + \frac{2\vpi(l)\mP(l,k)[\mM - [\mM]_{\rm{dg}}](i,k)}{\vpi(i)} + \frac{2\left[\vpi(\mP \circ \mW)\left(\frac{\partial \mM}{\partial \mW(l,k)} - \left[\frac{\partial \mM}{\partial \mW(l,k)}\right]_{\rm{dg}}\right)\right]_i}{\vpi(i)}  .\label{eq:dM2T_diag}
\end{dmath}
\end{theorem}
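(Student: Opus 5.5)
We will differentiate the closed-form expressions of Theorem~\ref{le:1} and Theorem~\ref{th: second momemt} term by term, treating $\mP$ (hence $\vpi$, $\Xi$ and $\mZ$) as fixed, since none of them depends on $\mW$. The first claim, $\partial \mV_{\cal W}/\partial \mW(l,k) = \partial \mM^{(2)}/\partial \mW(l,k) - 2\mM\circ \partial\mM/\partial \mW(l,k)$, follows at once from $\mV_{\cal W}=\mM^{(2)}-\mM\circ\mM$ by the entrywise product rule. Formula \eqref{eq:dMT} is exactly Lemma~\ref{lem:derivative_mfpt}, which we invoke directly. What remains is the second-moment derivative \eqref{eq:dM2T} and its diagonal \eqref{eq:dM2T_diag}.

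We will regard $\mW^{(2)}$ as a function of $\mW$; for instance, under a fixed-variance parametrization $\mW^{(2)}=\mW\circ\mW+{\rm Var}$. We leave $\partial(\mP\circ\mW^{(2)})/\partial\mW(l,k)$ unspecified, as the statement does. The off-diagonal part of $\mM^{(2)}$ in Theorem~\ref{th: second momemt} is a sum of three blocks.

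\textbf{First block.} The term $\mZ(\mP\circ\mW^{(2)})\mathbf 1\mathbf 1^\top-\mathbf 1\mathbf 1^\top[\cdot]_{\rm dg}$ is linear in $\mP\circ\mW^{(2)}$, and the operator $[\cdot]_{\rm dg}$ is linear. Differentiating it therefore gives the first bracket of \eqref{eq:dM2T} directly.

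\textbf{Second block.} The block $2(\mZ(\mP\circ\mW)(\mM-[\mM]_{\rm dg})-\mathbf 1\mathbf 1^\top[\cdot]_{\rm dg})$ is bilinear in $(\mP\circ\mW,\mM)$, so we apply the product rule.
\begin{itemize}
\item Differentiating $\mP\circ\mW$ gives $\mP(l,k)\mathbf e_l\mathbf e_k^\top$, which produces the second bracket of \eqref{eq:dM2T}.
\item Differentiating $\mM$ gives the term with $\partial\mM/\partial\mW(l,k)$. Here we use that $[\cdot]_{\rm dg}$ commutes with differentiation.
\end{itemize}

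\textbf{Third block.} The block $(\mI-\mZ+\mathbf 1\mathbf 1^\top[\mZ]_{\rm dg})[\mM^{(2)}]_{\rm dg}$ depends on $\mW$ only through the diagonal. Its derivative is therefore the last term of \eqref{eq:dM2T}.

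For the diagonal derivative \eqref{eq:dM2T_diag}, we differentiate the explicit formula for $\mM^{(2)}(i,i)$ in Theorem~\ref{th: second momemt}.
\begin{itemize}
\item The numerator term $\vpi(\mP\circ\mW^{(2)})\mathbf 1$ yields the first summand.
\item The term $2\vpi(\mP\circ\mW)(\mM-[\mM]_{\rm dg})$, evaluated at $i$, splits by the product rule again. The factor $\vpi\,\mP(l,k)\mathbf e_l\mathbf e_k^\top=\vpi(l)\mP(l,k)\mathbf e_k^\top$ picks out row $k$ of $\mM-[\mM]_{\rm dg}$, and taking the $i$-th entry gives $[\mM-[\mM]_{\rm dg}](k,i)$. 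The second part is $2[\vpi(\mP\circ\mW)(\partial\mM-[\partial\mM]_{\rm dg})]_i$.
\item The denominator $\vpi(i)$ is constant in $\mW$ and stays unchanged.
\end{itemize}

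The main obstacle is bookkeeping rather than analysis. First, we must confirm that nothing hidden depends on $\mW$; this holds because $\mZ$, $\Pi$ and $\Xi$ are functions of $\mP$ alone. Second, we must get the index placement right in the middle term. The computation naturally gives entry $(k,i)$, so we will check this against the statement's $(i,k)$ carefully by writing out $\vpi\mathbf e_l\mathbf e_k^\top\mathbf X$ entrywise. Differentiability itself is immediate, since every expression involved is polynomial in $\mW$ and $\mW^{(2)}$.
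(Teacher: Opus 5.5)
Your proposal is correct and follows the paper's proof essentially step for step. You use the Hadamard product rule on $\mV_{\cal W} = \mM^{(2)} - \mM\circ\mM$, reuse Lemma~\ref{lem:derivative_mfpt} for \eqref{eq:dMT}, differentiate the closed form in Theorem~\ref{th: second momemt} term by term, and resolve the implicit diagonal term by differentiating the explicit formula for $\mM^{(2)}(i,i)$.

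On the index you flagged, your computation is the right one. Since $\vpi\,\mathbf{e}_l\mathbf{e}_k^\top \mathbf{X} = \vpi(l)\,\mathbf{e}_k^\top \mathbf{X}$ has $i$-th entry $\mathbf{X}(k,i)$, the middle summand of \eqref{eq:dM2T_diag} should read $2\vpi(l)\mP(l,k)[\mM-[\mM]_{\rm dg}](k,i)/\vpi(i)$. The $(i,k)$ in the statement, repeated in the paper's appendix, is a transposition slip, and it matters because $\mM$ is not symmetric in general.
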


\noindent {\bf Proof:}
The VWFPTs is defined as
$ \mV_{\cal W} = \mM^{(2)} - (\mM \circ \mM) $.
From the Hadamard product rule $\frac{\partial(\mM \circ \mM)}{\partial \mW(l,k)} = 2\mM \circ \frac{\partial \mM}{\partial \mW(l,k)}$, we obtain:
$$\frac{\partial \mV_{\cal W}}{\partial \mW(l,k)} = \frac{\partial \mM^{(2)}}{\partial \mW(l,k)} - 2\mM \circ \frac{\partial \mM}{\partial \mW(l,k)} .$$

The expression in \eqref{eq:dMT} follows from differentiating the closed-form solution for $\mM$ in Theorem~\ref{le:1}. Taking the partial derivative with respect to $\mW(l,k)$:
\begin{dmath*}
\frac{\partial \mM}{\partial \mW(l,k)} = \left[\mZ\frac{\partial(\mP \circ \mW)}{\partial \mW(l,k)}\mathbf{1}\vpi - \mathbf{1}\mathbf{1}^\top\left[\mZ\frac{\partial(\mP \circ \mW)}{\partial \mW(l,k)}\mathbf{1}\vpi\right]_{\rm{dg}} + \frac{\partial C}{\partial \mW(l,k)}(\mI - \mZ + \mathbf{1}\mathbf{1}^\top[\mZ]_{\rm{dg}})\right]\Xi^{-1} .
\end{dmath*}
Since $\frac{\partial(\mP \circ \mW)}{\partial \mW(l,k)}$ has $\mP(l,k)$ in position $(l,k)$ and zeros elsewhere, we have $\frac{\partial(\mP \circ \mW)}{\partial \mW(l,k)}\mathbf{1} = \mP(l,k)\mathbf{e}_l$ and $\frac{\partial C}{\partial \mW(l,k)} = \vpi(l)\mP(l,k)$. Substituting and factoring out $\mP(l,k)$:
\begin{dmath*}
\frac{\partial \mM}{\partial \mW(l,k)} = \mP(l,k)\left[\mZ\mathbf{e}_l\vpi - \mathbf{1}\mathbf{1}^\top[\mZ\mathbf{e}_l\vpi]_{\rm{dg}} + \vpi(l)(\mI - \mZ + \mathbf{1}\mathbf{1}^\top[\mZ]_{\rm{dg}})\right]\Xi^{-1} .
\end{dmath*}
Applying $\vpi\Xi^{-1} = \mathbf{1}^\top$ to the first two terms gives $\mZ\mathbf{e}_l\mathbf{1}^\top - \mathbf{1}\mathbf{1}^\top[\mZ\mathbf{e}_l\mathbf{1}^\top]_{\rm{dg}}$, yielding equation~\eqref{eq:dMT}.

For the second moment derivative, we differentiate the closed-form expression for $\mM^{(2)}$ from Theorem~\ref{th: second momemt} term by term. 
The resulting formula contains $\left[\frac{\partial \mM^{(2)}}{\partial \mW(l,k)}\right]_{\rm{dg}}$ implicitly on the right-hand side. 
We resolve this by differentiating the explicit diagonal formula for $\mM^{(2)}(i,i)$ from Theorem~\ref{th: second momemt} directly, which yields equation~\eqref{eq:dM2T_diag}, an explicit expression depending only on known quantities and $\frac{\partial \mM}{\partial \mW(l,k)}$. Substituting these diagonal elements back completes the derivative of $\mM^{(2)}$; for details, see Section~\ref{derivative_second_moment} of the Appendix. \qed

\begin{remark}\label{rem:deterministic_rewards}
The derivative $\frac{\partial(\mP \circ \mW^{(2)})}{\partial \mW(l,k)}$ depends on the relationship between the second moment $\mW^{(2)}(i,j) = \mathbb{E}[\mathcal{W}^2(i,j)]$ and the mean $\mW(i,j) = \mathbb{E}[\mathcal{W}(i,j)]$, which is distribution-dependent. For deterministic weights, where $\mW^{(2)}(i,j) = (\mW(i,j))^2$, this simplifies to:
\begin{equation}
\frac{\partial(\mP \circ \mW^{(2)})}{\partial \mW(l,k)} = 2\mW(l,k)\mP(l,k)\mathbf{e}_l\mathbf{e}_k^\top
\end{equation}
and consequently $\frac{\partial(\mP \circ \mW^{(2)})}{\partial \mW(l,k)}\mathbf{1} = 2\mW(l,k)\mP(l,k)\mathbf{e}_l$. For stochastic weights,
the relationship between $\mW^{(2)}$ and $\mW$ differs, leading to alternative expressions for this derivative.
\end{remark}

Following similar lines of reasoning, the gradients with respect to $\mP$ can be derived -- though this entails dealing with the derivative of the stochastic matrix $\mP$, which is nontrivial -- and we refer to Section~\ref{sec:gradp} in the Appendix for details. 

\subsection{A Reflection on the Choice of Kemeny Constant}\label{sec:KK}

We first show that $ K ( \mP , {\cal W} )$ is linear in $ \mW $.

\begin{lemma}[Linearity of $K(\mP, {\cal W})$ in $\mW$]\label{le:linK}
For the Kemeny constant, it holds that
\begin{dmath*}
     K ( \mP , {\cal W} ) = {\rm tr}(\mZ) \cdot \vpi (\mP \circ \mW) {\bf 1}  \, .
\end{dmath*}
\end{lemma}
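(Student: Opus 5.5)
The plan is to substitute the closed-form expression for $\mM$ from Theorem~\ref{le:1} into the definition \eqref{travel_Kemeny_def}, i.e. $K(\mP,{\cal W}) = \vpi \mM \vpi^\top$, and evaluate each of the three terms separately. Throughout, write $C := \vpi(\mP\circ\mW){\bf 1}$, which is a scalar. The key simplification is that the trailing factor $\Xi^{-1}\vpi^\top = {\bf 1}$, since $\Xi = {\rm dg}(\vpi)$. Hence
\[
K(\mP,{\cal W}) = \vpi\Big(\mZ(\mP\circ\mW)\Pi - {\bf 1}{\bf 1}^\top[\mZ(\mP\circ\mW)\Pi]_{\rm dg} + C(\mI-\mZ+{\bf 1}{\bf 1}^\top[\mZ]_{\rm dg})\Big){\bf 1}.
\]

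I will then use three standard facts about the fundamental matrix: $\vpi\mZ = \vpi$, $\mZ{\bf 1}={\bf 1}$, and $\Pi{\bf 1} = {\bf 1}$. Each follows directly from $\mZ=(\mI-\mP+\Pi)^{-1}$ together with $\vpi\mP=\vpi$ and $\mP{\bf 1}={\bf 1}$.

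With these facts, the terms evaluate as follows.

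First term: $\vpi\mZ(\mP\circ\mW)\Pi{\bf 1} = \vpi(\mP\circ\mW){\bf 1} = C$.

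Second term: since $\vpi{\bf 1}=1$, it equals ${\bf 1}^\top[\mZ(\mP\circ\mW)\Pi]_{\rm dg}{\bf 1} = {\rm tr}(\mZ(\mP\circ\mW){\bf 1}\vpi)$. By the cyclic property of the trace, this is $\vpi\mZ(\mP\circ\mW){\bf 1} = C$. So the first two terms cancel.

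Third term: $C\,(\vpi{\bf 1} - \vpi\mZ{\bf 1} + {\bf 1}^\top[\mZ]_{\rm dg}{\bf 1}) = C\,(1-1+{\rm tr}(\mZ)) = C\,{\rm tr}(\mZ)$.

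Altogether this yields $K(\mP,{\cal W}) = {\rm tr}(\mZ)\,\vpi(\mP\circ\mW){\bf 1}$, which is the claim.

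The main obstacle is the second term. One has to recognise that ${\bf 1}^\top[\mA]_{\rm dg}{\bf 1} = {\rm tr}(\mA)$, and then apply the cyclic trace identity to the rank-one matrix $\mZ(\mP\circ\mW){\bf 1}\vpi$. This is what makes the second term cancel exactly with the first. Everything else is routine manipulation with $\vpi\mZ=\vpi$ and $\mZ{\bf 1}={\bf 1}$.

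As a sanity check, setting $\mW={\bf 1}{\bf 1}^\top$ gives $C=1$ and recovers the classical $K(\mP)={\rm tr}(\mZ)$. Note that this is in the normalisation $\vpi\mL\vpi^\top$ with $\mL(j,j)=1/\vpi(j)$, which the authors' convention adopts.
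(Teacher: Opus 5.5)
Your proposal is correct and follows essentially the same route as the paper: substitute the closed form of $\mM$ from Theorem~\ref{le:1} into $\vpi\mM\vpi^\top$, use $\Xi^{-1}\vpi^\top = {\bf 1}$, $\vpi\mZ=\vpi$ and $\mZ{\bf 1}={\bf 1}$, and note that the second term is a trace equal to $\vpi(\mP\circ\mW){\bf 1}$. The paper states the intermediate form $K(\mP,{\cal W}) = \vpi(\mP\circ\mW){\bf 1}\,(1+{\rm tr}(\mZ)) - {\rm tr}(\Pi(\mP\circ\mW))$ and then checks ${\rm tr}(\Pi(\mP\circ\mW)) = \vpi(\mP\circ\mW){\bf 1}$ entrywise; this is the same cancellation you obtain via cyclicity of the trace.
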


\noindent
{\bf Proof}: By substituting \eqref{finalized_M_formula} into \eqref{travel_Kemeny_def}, the following closed-form representation of the Kemeny constant can be derived:
\begin{dmath}\label{kemeny constant formula explicit}
    K(\mP, {\cal W}) = \vpi(\mP \circ \mW)\mathbf{1}(1 + {\rm tr}(\mZ)) - {\rm tr}(\Pi(\mP \circ \mW)) .
\end{dmath}
It remains to show that ${\rm tr}(\Pi(\mP \circ \mW)) = \vpi(\mP \circ \mW)\mathbf{1}$. Since $\Pi = \mathbf{1}\vpi$, every row of $\Pi$ equals $\vpi$, so:
\begin{dmath}\label{temp}
    {\rm tr}(\Pi(\mP \circ \mW)) = \sum_i \sum_j \vpi(j)\mP(j,i)\mW(j,i)
    = \sum_j \vpi(j) \sum_i \mP(j,i)\mW(j,i)
    = \vpi(\mP \circ \mW)\mathbf{1}.
\end{dmath}
Substituting \eqref{temp} back into \eqref{kemeny constant formula explicit} results in $K(\mP, {\cal W}) = {\rm tr}(\mZ) \cdot \vpi(\mP \circ \mW)\mathbf{1}$.
\hfill $\square$

Lemma~\ref{le:linK} allows for the following differentiation result.

\begin{theorem}[Derivative of $K(\mP, {\cal W})$ with respect to $\mW$] \label{theorem: derivative K^t with pi}
\noindent 
Provided that (A) holds, the derivative of the Kemeny constant $ K ( \mP , {\cal W} ) $ with respect to the weights $\mW$ is given by
$$
\frac{\partial K ( \mP , {\cal W} ) }{\partial \mW ( l,k ) } = 
\text{tr}(\mZ) \vpi (l) \mP ( l,k ),
$$
for $ (l,k ) \in \mathbb{E}$.
\end{theorem}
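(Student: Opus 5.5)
The plan is to use the linear representation from Lemma~\ref{le:linK}, namely $K(\mP,{\cal W}) = {\rm tr}(\mZ)\cdot \vpi(\mP\circ\mW)\mathbf{1}$. The first step is to note that neither ${\rm tr}(\mZ)$ nor $\vpi$ depends on $\mW$. Indeed, $\mZ = (\mI - \mP + \Pi)^{-1}$ with $\Pi = \mathbf{1}\vpi$, and $\vpi$ is the stationary distribution of the embedded jump chain $\mP$. Under (A) this distribution is unique and determined by $\mP$ alone, whereas the weights affect only $\vpi_{\cal W}$ and not $\vpi$. So, with $\mP$ held fixed, ${\rm tr}(\mZ)$ is a constant scalar, and $K(\mP,{\cal W})$ is a linear functional of the mean weight matrix $\mW$.

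Next I write the scalar $\vpi(\mP\circ\mW)\mathbf{1}$ entrywise as $\sum_{j}\sum_{i}\vpi(j)\mP(j,i)\mW(j,i)$, exactly as in the computation \eqref{temp}. Differentiating with respect to $\mW(l,k)$ for $(l,k)\in\mathbb{E}$ picks out the single term with $j=l$ and $i=k$. Equivalently, as in the proof of Lemma~\ref{lem:derivative_mfpt}, $\partial(\mP\circ\mW)/\partial\mW(l,k) = \mP(l,k)\mathbf{e}_l\mathbf{e}_k^\top$, so that $\partial C/\partial\mW(l,k) = \vpi\,\mP(l,k)\mathbf{e}_l\mathbf{e}_k^\top\mathbf{1} = \vpi(l)\mP(l,k)$. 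Multiplying by the constant ${\rm tr}(\mZ)$ then gives the claimed formula ${\rm tr}(\mZ)\,\vpi(l)\,\mP(l,k)$.

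There is no real obstacle here. The only point needing care is to state explicitly that the derivative is taken with $\mP$ fixed, so that the independence of $\mZ$ and $\vpi$ from $\mW$ is justified. One should also restrict to $(l,k)\in\mathbb{E}$, since weights off the edge set are fixed at zero and are not free variables.

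As a cross-check, I would differentiate \eqref{kemeny constant formula explicit} directly. This gives $\vpi(l)\mP(l,k)(1+{\rm tr}(\mZ)) - \vpi(l)\mP(l,k)$, which agrees with the formula above.
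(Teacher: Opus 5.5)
Your proposal is correct and follows essentially the paper's argument: both rest on the closed form from Lemma~\ref{le:linK} and on the fact that $\mZ$ and $\vpi$ depend only on $\mP$, not on $\mW$. The paper differentiates the unsimplified expression $\vpi(\mP\circ\mW)\mathbf{1}\,(1+{\rm tr}(\mZ)) - {\rm tr}(\Pi(\mP\circ\mW))$, which is exactly your cross-check. You instead differentiate the simplified linear form ${\rm tr}(\mZ)\,\vpi(\mP\circ\mW)\mathbf{1}$ directly, which is slightly cleaner.
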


\noindent {\bf Proof}:
Starting from
$K(\mP, {\cal W}) = \beta(1 + \text{tr}(\mZ)) - \text{tr}(\vpi(\mP \circ \mW))$, 
where $\beta = \vpi(\mP \circ \mW)1$, we may take the derivative with respect to $\mW(l,k)$:

\begin{dmath*}
    \left(\frac{\partial K(\mP, {\cal W})}{\partial \mW}\right)_{(l,k)} = \frac{\partial \beta}{\partial \mW(l,k)}(1 + \text{tr}(\mZ)) + \beta \frac{\partial \text{tr}(\mZ)}{\partial \mW(l,k)} - \frac{\partial}{\partial \mW(l,k)}\text{tr}(\vpi(\mP \circ \mW))
\end{dmath*}
Since $Z = (I -\mP+ \vpi)^{-1}$ does not depend on $\mW$, its derivative w.r.t. $\mW$ is zero.
Then, for the derivative the following equation holds:
\begin{dmath*}
    \frac{\partial K(\mP, {\cal W})}{\partial \mW(l,k)} = \vpi(l) \mP(l,k)(1 + \text{tr}(\mZ)) - \vpi(l) \mP(l,k) = \vpi(l) \mP(l,k) \cdot \text{tr}(\mZ),
\end{dmath*}
or, in matrix notation,
$\partial K(\mP,{\cal W})/ \partial \mW  = \text{tr}(\mZ) \cdot (\vpi^\top \circ \mP).$
\hfill $\square$

The above theorem shows that the partial derivatives of
$ K ( \mP , {\cal W} ) $ do not depend on $\mW$. Indeed, $\mZ = (\mI -\mP+ \Pi)^{-1}$, $\Pi = {\bf 1} \pi$, and $\mP$ are all independent of the weight matrix $\mW$, the gradient depends only on the topology and stationary distribution of the Markov chain, not on the actual weights.
This independence of the actual weights has practical implications. Consider two networks with identical topology but different weights.
If we take $ K ( \mP , {\cal W} ) $  as a measure of connectivity for the weighted Markovian Graphs, then, by Theorem~\ref{theorem: derivative K^t with pi}, $ K ( \mP , {\cal W}) $ has identical gradients for both networks.
This makes it impossible to distinguish between edges with high weights and those with small weights in optimization w.r.t.~edge weights.
Next, we show that the gradient of $ K_{\cal W} ( \mP , {\cal W} ) $ overcomes this limitation as it depends on the weights.

\begin{theorem}[Derivative of $K_{\cal W} (\mP, {\cal W})$ with respect to $\mW$]\label{thm:derivative_kw}
Provided that (A) and (A$^*$) hold,
the derivative of the Kemeny $ K_{\cal W} ( \mP , {\cal W} )  $ constant with respect to $\mW(l,k)$ is:
\begin{dmath}
\frac{\partial K_{\cal W} ( \mP , {\cal W} ) }{\partial \mW(l,k)} = \frac{\vpi(l)\mP(l,k)}{Y^2}\left[(Y\mathbf{e}_l - \mathbf{n})^\top \mM \vpi_{ \cal W}^\top + \vpi_{ \cal W} \mM (Y\mathbf{e}_l - \mathbf{n})\right]  + \vpi_{ \cal W} \frac{\partial \mM}{\partial \mW(l,k)} \vpi_{ \cal W}^\top ,
\end{dmath}
where:
\begin{itemize}

\item $\mathbf{n} = (\vpi(1)\bar{U}(1), \ldots, \vpi(n)\bar{U}(n))^\top$,
\item $Y = \sum_{i \in \sV} \vpi(i)\bar{U}(i)$,
\item $\frac{\partial \mM}{\partial \mW(l,k)}$ is given by Lemma \ref{lem:derivative_mfpt}.
\end{itemize}
\end{theorem}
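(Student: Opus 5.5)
The plan is to differentiate the bilinear form $K_{\cal W} = \vpi_{\cal W}\,\mM\,\vpi_{\cal W}^\top$ from \eqref{eq:fok} by the product rule. Treating $\vpi_{\cal W}$ as a row vector gives
\[
\frac{\partial K_{\cal W}}{\partial \mW(l,k)} = \frac{\partial \vpi_{\cal W}}{\partial \mW(l,k)}\,\mM\,\vpi_{\cal W}^\top + \vpi_{\cal W}\,\mM\,\Big(\frac{\partial \vpi_{\cal W}}{\partial \mW(l,k)}\Big)^{\top} + \vpi_{\cal W}\,\frac{\partial \mM}{\partial \mW(l,k)}\,\vpi_{\cal W}^\top .
\]
The last term is already in the required form, with $\partial \mM/\partial \mW(l,k)$ supplied by Lemma~\ref{lem:derivative_mfpt}.

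It therefore remains to put the derivative of $\vpi_{\cal W}$ into vector form. I would start from Lemma~\ref{lem:derivative_pi_w} and combine its two cases into one expression. For $i=l$ the lemma gives $\vpi(l)\mP(l,k)(Y-\vpi(l)\bar U(l))/Y^2$. For $i\neq l$ it gives $-\vpi(l)\mP(l,k)\,\vpi(i)\bar U(i)/Y^2$. Both cases are the $i$-th component of
\[
\frac{\vpi(l)\mP(l,k)}{Y^2}\,(Y\mathbf{e}_l - \mathbf{n}),
\qquad \mathbf{n}(i)=\vpi(i)\bar U(i).
\]
This is also what one gets directly from \eqref{pi_W}: since $\partial \bar U(i)/\partial \mW(l,k) = \mP(l,k)\mathbf{1}_{i=l}$ and $\partial Y/\partial\mW(l,k) = \vpi(l)\mP(l,k)$, the quotient rule gives the same formula.

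Substituting this vector into the two product-rule terms, and noting that $Y\mathbf{e}_l-\mathbf{n}$ is a column vector, the first term becomes $(Y\mathbf{e}_l-\mathbf{n})^\top\mM\vpi_{\cal W}^\top$ and the second becomes $\vpi_{\cal W}\mM(Y\mathbf{e}_l-\mathbf{n})$. Both carry the common scalar factor $\vpi(l)\mP(l,k)/Y^2$, which gives the stated formula.

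The only delicate points are bookkeeping rather than analysis. First, row and column orientation must be handled carefully: $\vpi_{\cal W}$ is a row vector, and $\mM$ is not symmetric, so the two terms cannot be merged. Second, one must confirm that $\vpi$, $\mP$ and $\mZ$ do not depend on $\mW$, so that only $\bar U$, $Y$ and $\mM$ contribute derivatives. Assumption (A$^*$) is not actually used here, since only first moments enter.
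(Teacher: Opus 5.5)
Your proposal is correct and follows essentially the same route as the paper: the product rule applied to $K_{\cal W} = \vpi_{\cal W}\mM\vpi_{\cal W}^\top$, with Lemma~\ref{lem:derivative_pi_w} rewritten in vector form as $\frac{\vpi(l)\mP(l,k)}{Y^2}(Y\mathbf{e}_l-\mathbf{n})$ and Lemma~\ref{lem:derivative_mfpt} supplying the middle term. Your side remarks are also sound: because $\mM$ is not symmetric, the two terms involving the derivative of $\vpi_{\cal W}$ are not transposes of each other and must both be kept, which the paper's final formula does despite its parenthetical wording; and (A$^*$) is indeed not needed, since only first moments enter.
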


\noindent
{\bf Proof}:
 Apply the product rule to $K_{\cal W} ( \mP , {\cal W} ) = \vpi_{ \cal W} \mM \vpi_{ \cal W}^\top$:
\begin{dmath*}
\frac{\partial K_{\cal W} ( \mP , {\cal W} ) }{\partial \mW(l,k)} = \frac{\partial \vpi_{ \cal W}}{\partial \mW(l,k)} \mM \vpi_{ \cal W}^\top + \vpi_{ \cal W} \frac{\partial \mM}{\partial \mW(l,k)} \vpi_{ \cal W}^\top + \vpi_{ \cal W} \mM \frac{\partial \vpi_{ \cal W}^\top}{\partial \mW(l,k)}.
\end{dmath*}
From Lemma \ref{lem:derivative_pi_w}, we can express the derivative in vector form as:
\begin{dmath*}
    \frac{\partial \vpi_{ \cal W}}{\partial \mW(l,k)} = \frac{\vpi(l)\mP(l,k)}{Y^2}\left[Y\mathbf{e}_l - \mathbf{n}\right] ,
\end{dmath*}
where 
$\mathbf{n} = (\vpi(1)\bar{U}(1), \ldots, \vpi(n)\bar{U}(n))^\top$.

For $i = l$, the $l$-th component is $\frac{\vpi(l)\mP(l,k)}{Y^2}[Y \cdot 1 - \vpi(l)\bar{U}(l)] = \frac{\vpi(l)\mP(l,k)}{Y^2}[Y - \vpi(l)\bar{U}(l)]$
For $i \neq l$, the $i$-th component is $\frac{\vpi(l)\mP(l,k)}{Y^2}[Y \cdot 0 - \vpi(i)\bar{U}(i)] = -\frac{\vpi(i)\bar{U}(i)\vpi(l)\mP(l,k)}{Y^2}$.
Substituting into the product rule and combining the first and third terms (which are scalar transposes of each other):
\begin{dmath*}
    \frac{\partial K_{\cal W} ( \mP , {\cal W} ) }{\partial \mW(l,k)} = \frac{\vpi(l)\mP(l,k)}{Y^2}\left[(Y\mathbf{e}_l - \mathbf{n})^\top \mM \vpi_{ \cal W}^\top + \vpi_{ \cal W} \mM (Y\mathbf{e}_l - \mathbf{n})\right] + \vpi_{ \cal W} \frac{\partial \mM}{\partial \mW(l,k)} \vpi_{ \cal W}^\top .
\end{dmath*}
\hfill $\square$

Theorems~\ref{theorem: derivative K^t with pi} and~\ref{thm:derivative_kw} reveal a sharp structural contrast between the two Kemeny constants. The gradient of $K(\bm{P}, \mathcal{W})$ with respect to the weights depends only on the network topology and the stationary distribution of the embedded chain (Theorem~\ref{theorem: derivative K^t with pi}): two networks with identical $\bm{P}$ but vastly different weight structures produce identical gradients, making it impossible for a weight-based optimizer to distinguish a congested edge from a free-flowing one. By contrast, the gradient of $K_{\mathcal{W}}(\bm{P}, \mathcal{W})$ retains full dependence on the weights through $\bm{\pi}_{\mathcal{W}}$, $\bm{M}$, and the normalization quantities $\mathbf{n}$ and $Y$ (Theorem~\ref{thm:derivative_kw}). This sensitivity to the current weight configuration makes $K_{\mathcal{W}}$ the natural objective whenever the optimization acts on the weights themselves.

Together with the first- and second-order gradient expressions established in this section, these results confirm that network objectives built from weighted first-passage times, whether targeting the mean, the variance, or both, are fully amenable to gradient-based optimization. In the next section, we exploit this machinery in two applications.

\section{Applications}\label{sec:app}
We present two applications that highlight the dual role of variance in network optimization. In the surveillance setting of Section~\ref{sub:surveillance}, edge weights represent inspection times (holding-time model), and variance is an \textit{asset}: an unpredictable patrol is harder for an adversary to exploit. In the traffic setting of Section~\ref{sub:traffic}, edge weights represent physical travel times (travel-time model), and variance is a \textit{risk} to be constrained. Both applications rely on the closed-form gradient expressions from the preceding sections, but call for different choices of Kemeny constant following the reflection in Section~\ref{sec:KK}.

\subsection{Variance-Aware Surveillance Policy Optimization}
\label{sub:surveillance}

In many stochastic optimization problems, variance represents risk to be minimized.
The classical mean-variance framework \citep{Markowitz1952} seeks to maximize expected return while minimizing variability, a perspective extended to Markov decision processes through variance-constrained formulations \citep{guo2009mean,xia2020risk}.
However, in adversarial settings such as surveillance, the role of variance is fundamentally different: \emph{unpredictability is an asset, not a liability}.
A low-variance patrol is exploitable, since an intruder who observes the pattern can time intrusions to coincide with predictable coverage gaps \citep{duan2021stochastic}.

Existing Markov chain-based surveillance strategies optimize either for efficiency, by minimizing the (weighted) Kemeny constant \citep{patel2015robotic,franssen2025first}, or for unpredictability, by maximizing the entropy rate \citep{george2018markov,duan2019markov}; see \citet{duan2021markov} for a survey.
However, an intruder cares about the distribution of time until the next patrol visit, not the per-transition randomness of the agent's movements.
Entropy-based criteria quantify the latter rather than the variability in \emph{when} a patroller returns, which is the operationally relevant quantity.
This motivates a variance-based approach that directly measures unpredictability in first-passage times, building on the closed-form expressions developed in Section~\ref{sec:mvwpt} and the optimization framework of \citet{franssen2025first}.

Consider a graph $\hat{\mathcal{G}} = (\mathbb{V}, \mathbb{E})$,
where each node $i \in \mathbb{V}$ represents a local area
(e.g., rooms in a museum or neighborhoods in a city)
and the edge set $\mathbb{E}$ encodes the physical layout.
A single agent patrols the environment following a randomized policy
given by a Markov chain with transition matrix $\mP$ on $\hat{\mathcal{G}}$,
where a deterministic $\mathcal{W}(i,j)$ denotes the time the agent needs
to inspect area~$i$ and travel to area~$j$. Note that due to the deterministic nature of the weights, (A$^*$) is trivially satisfied.
Let $\vmu$ be a target probability vector with $\vmu(i) > 0$ for all $i \in \mathbb{V}$,
representing the desired long-run fraction of time the agent spends in each area.
The target $\vmu$ may reflect surveillance priorities,
historical incident data, or simply a uniform distribution.
In the degenerate case, $\mP$ corresponds to a Hamiltonian cycle,
which minimizes mean passage times on many graphs \citep{franssen2025first} but is maximally predictable: the time and location of the agent can be anticipated with certainty.
To overcome this \emph{curse of predictability},
we introduce variance in the round-trip times of the surveillance agent. Following the discussion in Section~\ref{sec:KK}, this application adopts the holding-time interpretation of edge weights, making $K_{\cal W}$ and $V_{\cal W}$ the appropriate connectivity and variability measures.

We seek a transition matrix $\mP$ on $\hat{\mathcal{G}}$
with stationary distribution $\vmu$
that simultaneously achieves maximal variance
relative to the mean first-passage times.
We define the \emph{surprise index} of a surveillance policy $\mP$, with exogenous inspection times $\cal W$ as:
\begin{equation*}\label{eq:surprise_index}
    \mathcal{S}(\mP, {\cal W}) := \frac{\sqrt{V_{\mathcal{W}}(\mP, {\cal W})}}{K_{\mathcal{W}}(\mP, {\cal W})},
\end{equation*}
where $K_{\mathcal{W}}(\mP, {\cal W})$ is the first-order Kemeny constant (defined via the MWFPTs, see \eqref{eq:fok}) and $V_{\mathcal{W}}(\mP, {\cal W})$ is the second-order Kemeny
(defined via the VWFPTs, see \eqref{eq:sok}).
The surprise index measures the overall network variability in first-passage times
relative to overall network connectivity:
a high $\mathcal{S}$ indicates that unpredictability dominates operational efficiency.
The square-root formulation yields a coefficient-of-variation type metric
that is dimensionless and scale-invariant,
enabling meaningful comparisons across networks with different weight configurations.
As a reference scale:
$\mathcal{S} = 1$ means the standard deviation of weighted first-passage times
equals the mean, so return times are as dispersed as they are long on average;
$\mathcal{S} \ll 1$ indicates a near-deterministic patrol
(e.g., a Hamiltonian cycle, where return times are tightly concentrated);
and $\mathcal{S} > 1$ indicates a policy
where the spread in return times exceeds the average,
making it difficult for an adversary
to predict when any given area will next be visited.

This leads to the following optimization problem,
where we aim to find a surveillance policy $\mP$
that maximizes the surprise index $\mathcal{S}(\mP, \mathcal{W})$,
given exogenous service times $\mathcal{W}$:
\begin{subequations}
\begin{align}
    \max_{\mP} \quad & \mathcal{S}(\mP, \mathcal{W}) \label{eq:objective}\\
    \text{subject to} \quad
    & \vmu^\top \mP = \vmu^\top \label{eq:stationary}\\
    & \mP \mathbf{1} = \mathbf{1} \label{eq:rowstochastic}\\
    & \mP(i,j) \geq \eta, \quad \forall (i,j) \in \mathbb{E} \label{eq:lowerbound}\\
    & \mP(i,j) = 0, \quad \forall (i,j) \notin \mathbb{E}. \label{eq:support}
\end{align}%
\end{subequations}%
Constraint~\eqref{eq:stationary} enforces that the long-run visit frequencies
match the target distribution~$\vmu$;
\eqref{eq:rowstochastic} ensures $\mP$ is row-stochastic;
\eqref{eq:lowerbound} imposes a minimum transition probability
$\eta \in (0, 1/d_{\max})$ on every active edge,
where $d_{\max} = \max_{i \in \mathbb{V}} |\{j : (i,j) \in \mathbb{E}\}|$,
guaranteeing irreducibility;
and~\eqref{eq:support} restricts transitions to the physical layout.
The fractional objective~\eqref{eq:objective} is generally non-convex,
motivating the use of the SPSA-based gradient method
of \citet{franssen2025first}.
The well-posedness of the problem
and the existence of solutions are established below.

\begin{theorem}[Existence of solutions]\label{thm:wellposedness}
Let $\hat{\mathcal{G}} = (\mathbb{V}, \mathbb{E})$ be strongly connected with $n$ nodes, and let $\vmu > 0$ be strictly positive. Then the feasible set
\begin{equation*}
    \mathcal{F} = \left\{\mP \in \mathbb{R}^{n \times n} :
    \mP \text{ satisfies }
    \eqref{eq:stationary}\text{--}\eqref{eq:support}\right\}
\end{equation*}
is non-empty and compact, and the surprise index $\mathcal{S}(\mP, \mathcal{W})$ attains its maximum on $\mathcal{F}$.
\end{theorem}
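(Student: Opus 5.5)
The plan is the Weierstrass extreme value theorem. I would show that $\mathcal{F}$ is non-empty, closed and bounded (hence compact), and that $\mP \mapsto \mathcal{S}(\mP,\mathcal{W})$ is continuous on $\mathcal{F}$; the maximum is then attained. The steps are ordered from easiest to hardest, and the expected obstacle is non-emptiness.

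\emph{Compactness.} Constraints \eqref{eq:stationary}, \eqref{eq:rowstochastic} and \eqref{eq:support} are linear equalities and \eqref{eq:lowerbound} consists of non-strict linear inequalities, so $\mathcal{F}$ is an intersection of closed sets and is closed. Every entry of a feasible $\mP$ is non-negative and every row sums to one, so $\mathcal{F}\subseteq[0,1]^{n\times n}$ is bounded.

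\emph{Continuity.} Fix $\mP\in\mathcal{F}$. Its support is exactly $\mathbb{E}$ with entries at least $\eta>0$, and $\hat{\mathcal G}$ is strongly connected, so $\mP$ is irreducible. Constraint \eqref{eq:stationary} then pins its unique stationary distribution to $\vpi=\vmu$, which is constant on $\mathcal{F}$. For irreducible $\mP$ the matrix $\mI-\mP+\mathbf{1}\vmu^\top$ is invertible, even without aperiodicity, so $\mZ$ is a rational and hence continuous function of $\mP$ on $\mathcal{F}$. The closed forms for $\mM$ in Theorem~\ref{le:1} and for $\mM^{(2)}$ in Theorem~\ref{th: second momemt} are built from $\mZ$, $\mP\circ\mW$, $\mP\circ\mW^{(2)}$, $\Pi$ and $\Xi^{-1}$ by sums, products and the operator $[\cdot]_{\rm dg}$. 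They are therefore continuous, and so is $\mV=\mM^{(2)}-\mM\circ\mM$. The quantity $\vpi_{\cal W}$ in \eqref{pi_W} is continuous because its denominator satisfies $\sum_k\vmu(k)\bar U(k)\ge\epsilon>0$, since every weight is at least $\epsilon$.

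It follows that $K_{\cal W}$ and $V_{\cal W}$ are continuous on $\mathcal{F}$. Moreover $\mM(i,j)\ge\epsilon$, since every first passage uses at least one edge, which gives $K_{\cal W}\ge\epsilon>0$. Also $V_{\cal W}\ge 0$, being a convex combination of variances. Hence $\sqrt{V_{\cal W}}/K_{\cal W}$ is continuous on $\mathcal{F}$.

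\emph{Non-emptiness (main obstacle).} We need a row-stochastic $\mP$ with support exactly $\mathbb{E}$, entries at least $\eta$, and $\vmu^\top\mP=\vmu^\top$. Writing $F(i,j)=\vmu(i)\mP(i,j)$, this is equivalent to a circulation $F$ on $\mathbb{E}$ with throughput $\vmu(i)$ at each node and lower bounds $F(i,j)\ge\eta\,\vmu(i)$.

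I would first build a strictly positive circulation $c$ on $\mathbb{E}$. By strong connectivity every edge lies on a directed cycle, so $c$ can be taken as the sum of the indicator circulations of a family of cycles covering $\mathbb{E}$. If self-loops are present, the throughput can then be corrected by setting
\[
\mP(i,j)=\delta\, c(i,j)/\vmu(i)\ \ (i\ne j),\qquad \mP(i,i)=1-\delta\,\textstyle\sum_{j\ne i}c(i,j)/\vmu(i).
\]
This matrix is stationary for $\vmu$ because $c$ is balanced. If self-loops are absent, I would instead invoke Hoffman's circulation theorem with lower bounds $\eta\vmu(i)$ on the edges and node throughputs $\vmu(i)$.

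The delicate point is that these lower bounds can only be met when $\eta$ is small enough relative to $\vmu$ and the graph. A directed cycle, for instance, forces $\vmu$ to be uniform. So I expect that non-emptiness in general requires an admissibility condition on the pair $(\vmu,\eta)$, such as a Hoffman-type cut condition or $\eta$ below a graph-dependent threshold. The condition $\eta<1/d_{\max}$ alone does not seem sufficient. I would make this condition explicit and verify it, since the stated hypotheses appear not to guarantee it. Once feasibility is established, Weierstrass yields the maximizer.
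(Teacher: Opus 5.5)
Your compactness and continuity steps follow the same Weierstrass route as the paper, and they are more careful than the paper's version. You note that $\vpi=\vmu$ is constant on $\mathcal F$. You note that $\mI-\mP+\mathbf{1}\vmu^\top$ is invertible for every irreducible $\mP$ without any aperiodicity assumption; this matters, since every feasible $\mP$ on the paper's bipartite $4\times 4$ grid has period $2$. And you derive $K_{\mathcal W}\ge\epsilon$ from $\mM(i,j)\ge\epsilon$, where the paper only cites irreducibility.

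On non-emptiness your suspicion is correct. The statement is false as written, and the paper's proof of this step is wrong.
\begin{itemize}
\item The paper takes $\mH=\frac{1}{n}\mathbf{1}\mathbf{1}^\top$ and asserts $\vmu\mH=\vmu$. In fact $\vmu^\top\mH=\frac{1}{n}\mathbf{1}^\top$, which equals $\vmu^\top$ only when $\vmu$ is uniform.
\item $\mH$ also has full support, so it violates \eqref{eq:support} unless $\mathbb{E}$ is the complete graph with self-loops. It is therefore infeasible on the paper's own grid instances.
\item The argument thus covers only uniform $\vmu$ on the complete graph with loops.
\end{itemize}
Your directed cycle is a genuine counterexample. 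Each row has a single admissible entry, so $\mP$ is the cyclic permutation. Then \eqref{eq:stationary} forces $\vmu$ to be uniform, so $\mathcal F=\emptyset$ for every non-uniform $\vmu>0$ and every $\eta$. The paper tacitly concedes this when it checks feasibility of the $8\times 8$ instance by linear programming.

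So do not hedge. Present the counterexample and prove the corrected claim: $\mathcal F$ is compact, and whenever $\mathcal F\neq\emptyset$ the index $\mathcal S$ attains its maximum on it.

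When you state an admissibility condition, keep in mind that there are two independent obstructions.
\begin{itemize}
\item The first involves only $(\mathbb{E},\vmu)$, and no choice of $\eta$ repairs it. Besides your cycle, take the undirected path $1$--$2$--$3$ without loops and uniform $\vmu$. Here $\mP(1,2)=\mP(3,2)=1$ and $\mP(2,2)=0$ force $\vmu(2)=\vmu(1)+\vmu(3)$, which fails.
\item The second involves $\eta$, and the condition $\eta<1/d_{\max}$ does not control it. Take two nodes with all four edges, so $d_{\max}=2$, with $\vmu=(0.99,0.01)$ and $\eta=0.4$. The balance equation $\vmu(1)\mP(1,2)=\vmu(2)\mP(2,1)$ cannot hold, since its left side is at least $0.396$ and its right side is at most $0.01$.
\end{itemize}

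Two clean sufficient conditions are available. One is your self-loop construction: a loop at every node, with $\eta$ below a threshold depending on $c$ and $\vmu$. The other is uniform $\vmu$ on a graph whose in- and out-degrees all equal $d$. There the simple random walk $\mP(i,j)=1/d$ works, and $1/d>\eta$ holds automatically in the paper's range of $\eta$; this is presumably what $\mH$ was meant to capture. In general, feasibility is exactly the lower-bounded circulation condition you describe, that is, an LP feasibility check.
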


\noindent {\bf Proof:}
Let $ \mH$ be the matrix with all elements equal to $ 1 /n $.
Then $ \vmu \mH = \vmu$, and $ \mH \in {\cal F} $
provided that $ 1/ n \geq \eta.$
As for compactness of $ \cal F$, note that the set $\mathcal{F}$ is bounded ($\mP(i,j) \in [\eta, 1]$) and closed, since all constraints are preserved under limits. The lower bound $\mP(i,j) \geq \eta > 0$ on the strongly connected graph $\hat{\mathcal{G}}$ ensures that every limit point is irreducible with a unique stationary distribution $\vmu$. By Heine-Borel, $\mathcal{F}$ is compact.
Finally, the surprise index $\mathcal{S}(\mP, \mathcal{W})$ is continuous on $\mathcal{F}$ as $K_{\mathcal{W}}$ and $V_{\mathcal{W}}$ are both continuous functions of $\mP$ (as compositions of the stationary distribution, the fundamental matrix, and the weight matrices), and $K_{\mathcal{W}}(\mP) > 0$ for all $\mP \in \mathcal{F}$ since every feasible policy is irreducible. By the Weierstrass theorem, $\mathcal{S}$ attains its supremum at some $\mP^* \in \mathcal{F}$, and $\mathcal{S}(\mP^*, \mathcal{W}) < \infty$ since $\mathcal{S}$ is continuous on a compact set. 
{\qed}

Since $\mathcal{F}$ is compact and $\mathcal{S}$ is differentiable in the interior of $\mathcal{F}$, the optimal policy $\mP^*$ is either a stationary point of~$\mathcal{S}$ in the relative interior of $\mathcal{F}$ or lies on the boundary.
In both cases, $\mP^*$ can be approached by gradient-based methods. The SPSA algorithm of \citet{franssen2025first}, operating in the null space of the constraint matrix to maintain feasibility at every iteration, provides a practical method for finding such solutions. 

We illustrate the framework on a $4 \times 4$ grid network
with $n = 16$ nodes and $48$ directed edges.
Edge weights $\mathcal{W}(i,j)$ represent inspection plus travel times,
with uniform target distribution $\vmu = \tfrac{1}{16}\mathbf{1}$
and minimum transition probability $\eta = 10^{-4}$. We first solve the optimization problem with deterministic patrol times (or weights),
such that $\mathcal{W}^{(2)}(i,j) = (\mathcal{W}(i,j))^2$ for all edges,
and the coefficient of variation $\mathrm{CV}(i,j) := \sigma(i,j) / \mathcal{W}(i,j) = 0$.
In this setting, all variance in the first-passage times
arises solely from the stochastic routing decisions encoded in~$\mP$.
The baseline achieves $\mathcal{S} = 1.045$, meaning the standard deviation of weighted first-passage times approximately equals the mean, a natural reference point.
Minimizing variance produces a policy with $\mathcal{S} = 0.214$, a $79.5\%$ reduction, by concentrating transitions along a Hamiltonian cycle: all $16$ nodes have exactly one dominant outgoing edge ($\mP(i,j) > 0.5$), creating a deterministic circuit (Figure~\ref{fig:4x4_policies}(a)). This confirms that low-$\mathcal{S}$ policies are maximally predictable.
The maximum-surprise policy (Figure~\ref{fig:4x4_policies}(b)) achieves $\mathcal{S}^* = 1.408$, a $35\%$ increase over the baseline.
The optimized policy distributes transition probabilities broadly across available edges,
so that an intruder cannot reliably predict the agent's next move.

\begin{figure}
    \centering
    \subfloat[Min-variance ($\mathcal{S} = 0.21$)]{\includegraphics[width=.4\textwidth]{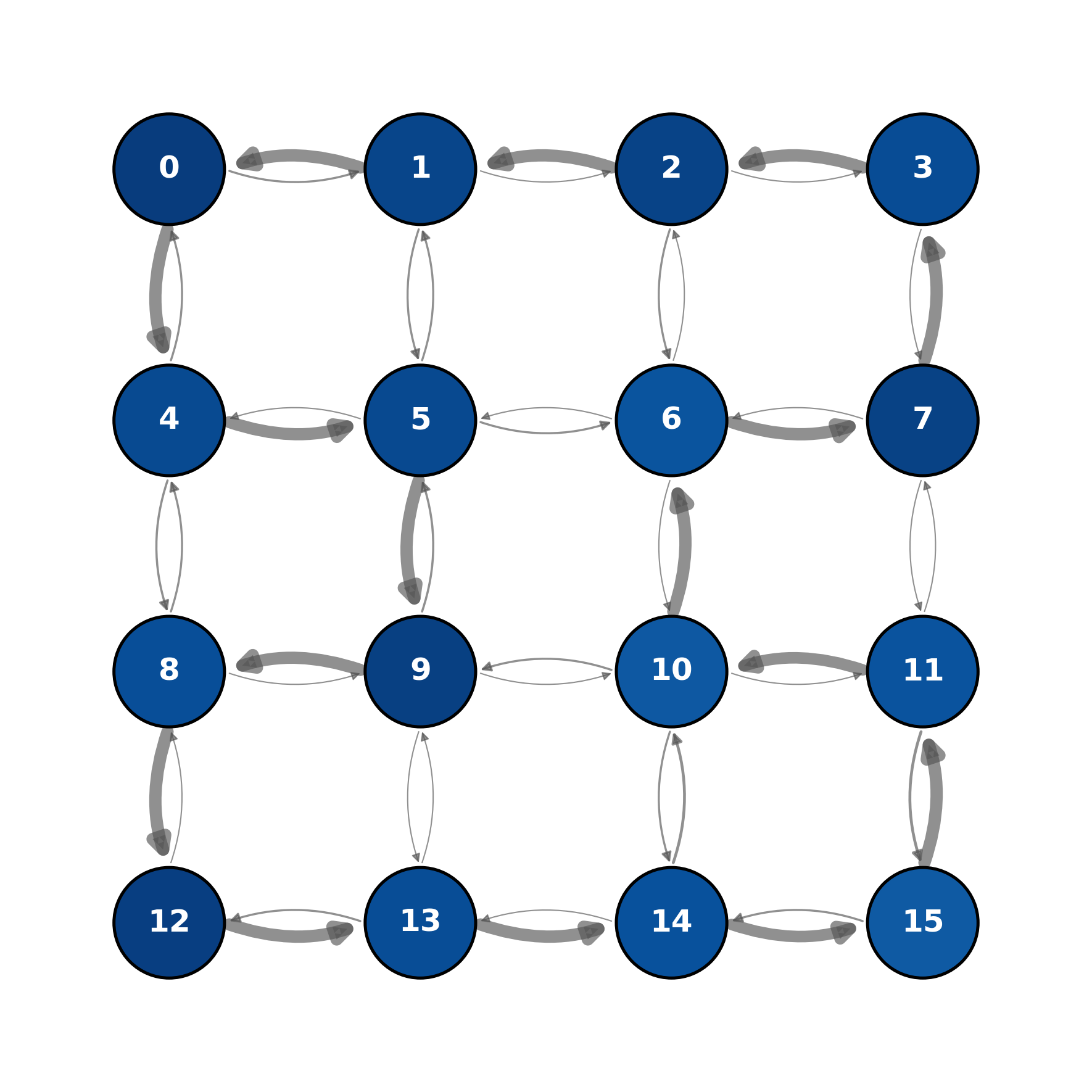}}
    \hspace{0.8cm}
    \subfloat[Max-surprise ($\mathcal{S}^* = 1.41$)]{\includegraphics[width=.4\textwidth]{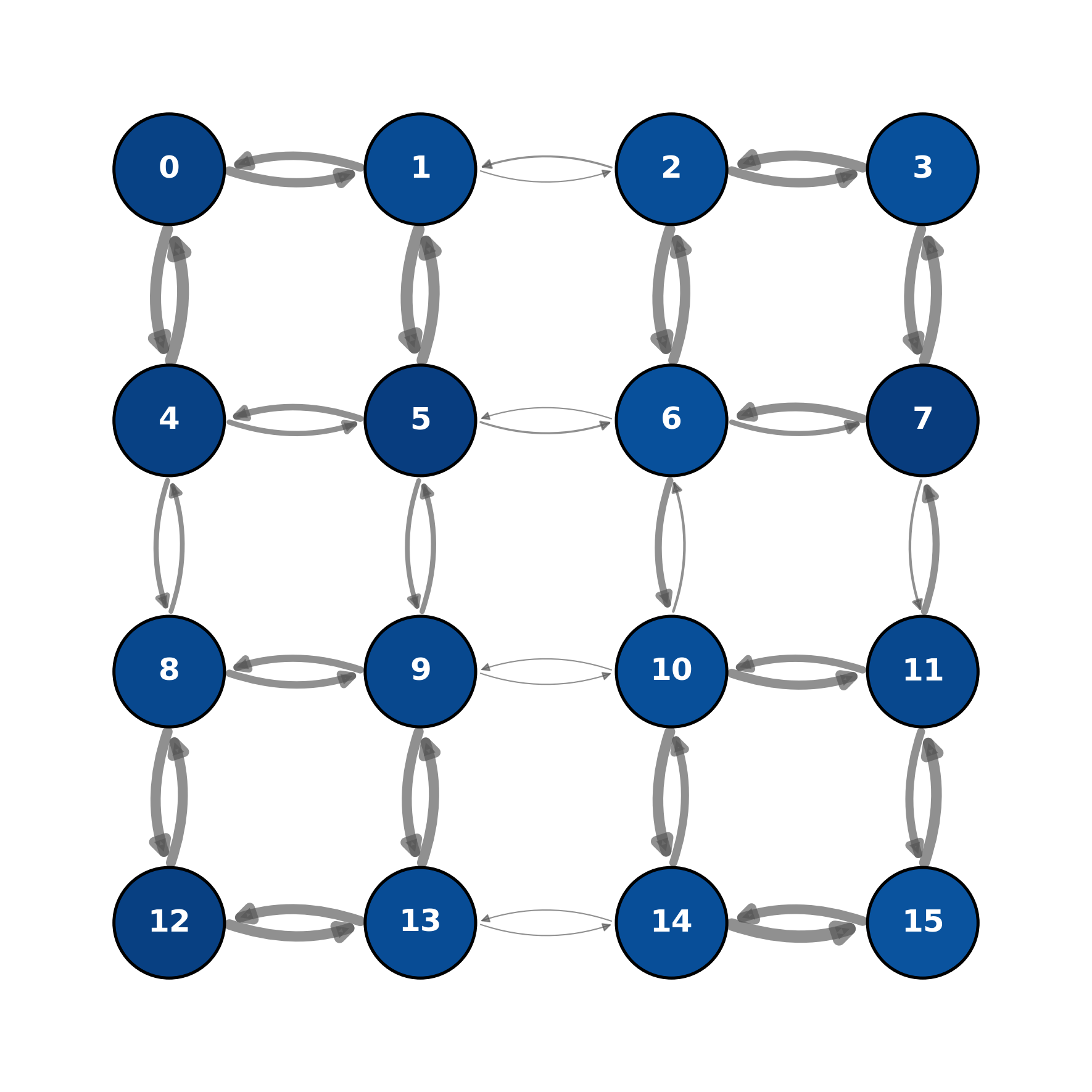}}
\caption{Optimized policies on the $4 \times 4$ grid with deterministic weights.
    (a)~Minimizing variance produces a predictable Hamiltonian cycle
    with $16$ dominant edges ($\mP(i,j) > 0.5$),
    reducing $\mathcal{S}$ by $79.5\%$.
    (b)~Maximizing $\mathcal{S}$ distributes transition probabilities broadly,
    producing high variance in first-passage times.
    Edge thickness indicates $\mP(i,j)$;
    node shading indicates $\boldsymbol{\pi}_{\mathcal{W}}(i)$.}\label{fig:4x4_policies}
\end{figure}

To demonstrate that the framework scales and accommodates non-uniform coverage requirements, we consider an $8 \times 8$ grid with four obstacle nodes arranged as two horizontal pairs
at positions $(2,2)$-$(2,3)$ and $(5,4)$-$(5,5)$, yielding $60$ accessible nodes and $196$ directed edges (Figure~\ref{fig:8x8_results}).
The $12$ nodes adjacent to obstacle clusters receive twice the coverage weight of regular nodes.
Note that not all obstacle configurations satisfy constraint~\eqref{eq:stationary}.
Depending on the target distribution $\vmu$ and the graph topology, replacing nodes by obstacles can alter the degree structure of neighboring nodes in ways that render
the flow-balance constraint $\sum_{i:(i,j)\in\mathbb{E}} \vmu(i)\,\mP(i,j) = \vmu(j)$
infeasible. We verify the feasibility of our instances via linear
programming before running the SPSA algorithm.

With stochastic weights (i.e., the coefficient of variation $\mathrm{CV}$ ranging from $0.30$ to $1.70$, mean $\mathrm{CV} = 0.85$, $40\%$ high-variability edges), the optimized policy achieves $\mathcal{S}^* = 1.255$, a $16\%$ increase over the baseline ($\mathcal{S} = 1.080$).
The more conservative gains on the $8 \times 8$ grid reflect the tighter feasible set imposed by the non-uniform stationary constraint and the larger network: the hard constraint~\eqref{eq:stationary} leaves less freedom to redistribute transition probabilities while simultaneously respecting the $2{:}1$ priority ratio. The correlation between optimized transition probabilities and edge CVs is $\rho(\mP, \mathrm{CV}) = -0.055$, indicating that the policy does not preferentially select high-CV edges; unpredictability is achieved primarily through the redistribution of routing probabilities rather than through exploitation of weight variability.

\begin{figure}
    \centering
    \includegraphics[width=.75\textwidth]{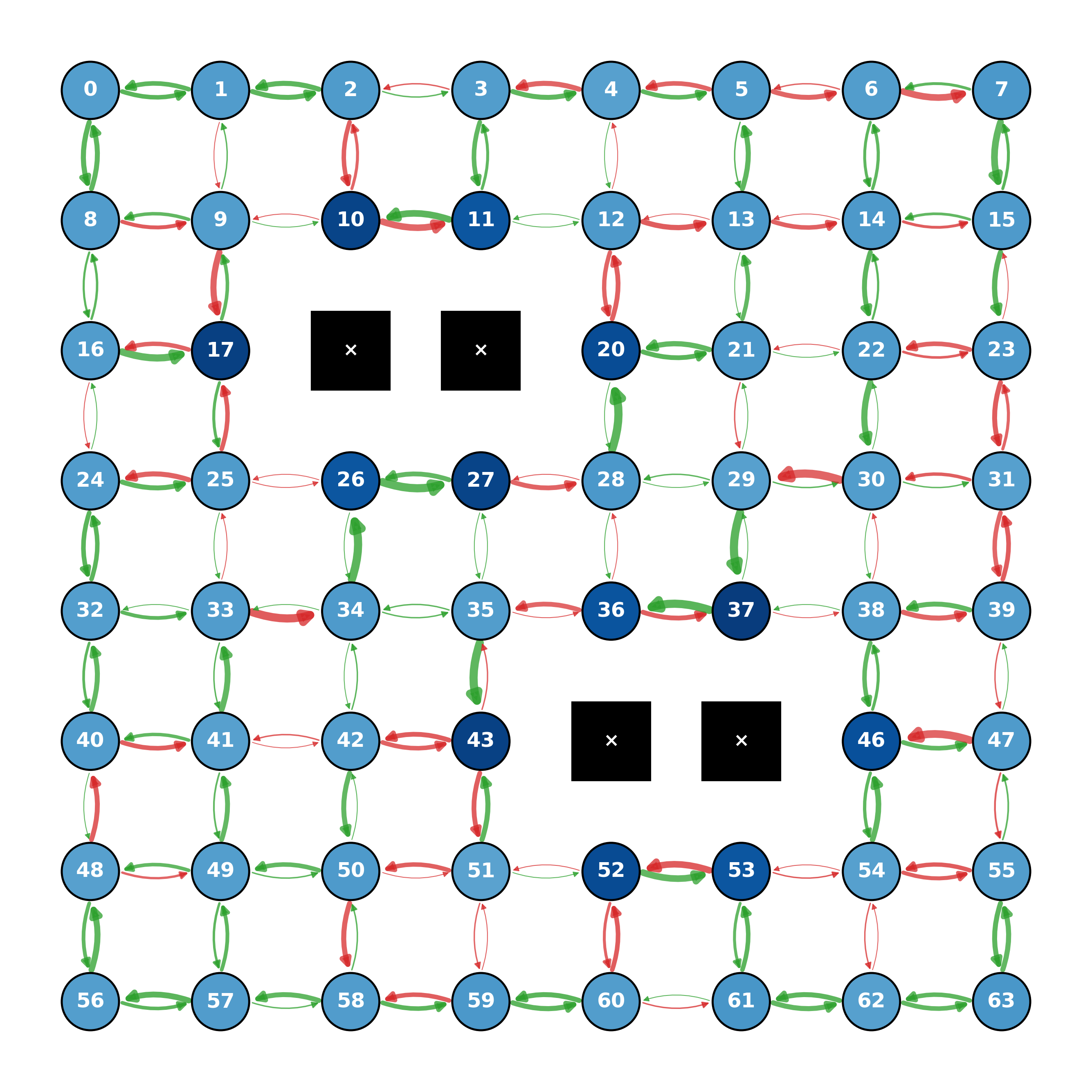}
\caption{Maximum-surprise policy on an $8 \times 8$ grid
    with non-uniform $\vmu$ and stochastic weights ($\mathcal{S}^* = 1.26$).
    Black squares denote obstacles;
    darker nodes indicate higher target coverage weight.
    Green edges have low CV ($< 1$),
    red edges have high CV ($\geq 1$);
    edge thickness indicates $\mP(i,j)$.
    The policy respects the $2{:}1$ priority ratio
    on obstacle-adjacent nodes.}\label{fig:8x8_results}
\end{figure}

Table~\ref{tab:surveillance_summary} consolidates the results. The experiments confirm that maximizing $\mathcal{S}$ consistently yields policies that exceed the baseline across grid sizes,
weight configurations, and coverage requirements, while minimizing variance produces maximally predictable Hamiltonian cycles ($\mathcal{S} \approx 0.21$).

Decomposing $\mathcal{S}$ into its components reveals that the optimization achieves surprise through qualitatively different mechanisms depending on the network.
On the unconstrained $4 \times 4$ grid, both $K_{\mathcal{W}}$ and $\sqrt{V_{\mathcal{W}}}$ increase (from $28.69$ to $2{,}622$ and from $29.98$ to $3{,}691$, respectively): the policy creates unpredictability by allowing long, indirect patrol routes that inflate both the mean and the spread of return times, with the spread growing faster than the mean.
On the constrained $8 \times 8$ grid, both $K_{\mathcal{W}}$ and $\sqrt{V_{\mathcal{W}}}$ \emph{decrease} (from $262.87$ to $202.58$ and from $283.95$ to $254.16$): here the non-uniform stationary constraint limits the ability to create long detours, so the policy instead achieves surprise by \emph{tightening connectivity}, or reducing mean passage times while preserving
a disproportionate share of the variance.
In both cases, $\mathcal{S}$ increases, but through opposite movements of its components.
This shows that the surprise index does not prescribe a single structural pattern, but rather adapts to the feasible set of each network, finding the best achievable variance-to-mean ratio
within the constraints imposed by the graph topology and the target distribution.

The framework naturally accommodates stochastic edge weights and non-uniform targets. These features are required for realistic surveillance networks. However, they are not naturally handled by existing entropy-based approaches \citep{george2018markov,duan2019markov}, which optimize over doubly stochastic matrices and implicitly assume uniform coverage.

\begin{table}[H]
\centering
\caption{Performance of different optimized surveillance policies for two network instances. The table reports $K_{\mathcal{W}}$, $\sqrt{V_{\mathcal{W}}}$, the surprise index $\cal S$, and the gain of the maximum-surprise policy compared to the baseline.}
\label{tab:surveillance_summary}
\small
\begin{tabular}{llrrrr}
\toprule
\textbf{Instance} & \textbf{Policy} & $\qquad \qquad K_{\mathcal{W}}$ & $\qquad \qquad \sqrt{V_{\mathcal{W}}}$ & $\qquad$ \textbf{Surprise index} $\mathcal{S}$ & $\qquad$ \textbf{Gain} \\ \midrule
 $4 \times 4$ grid   & Min-Variance    & 9.57                            & 2.05                                   & 0.214                                                           & --                                      \\
             & Baseline        & 28.69                           & 29.98                                  & 1.045                                                           & --                                      \\
                  & Max-Surprise    & 2,622.00                    & 3,691.00                           & 1.408                                                           & $+35\%$                                 \\ \hline
$8 \times 8$ grid & Baseline        & 262.87                          & 283.95                                 & 1.080                                                           & --                                      \\
                  & Max-Surprise    & 202.58                          & 254.16                                 & 1.255                                                           & $+16\%$                                 \\ \bottomrule
\end{tabular}%
\end{table}

\subsection{Policy-Aware Traffic Optimization Under Sequential Failures}
\label{sub:traffic}
In this subsection, we apply the traversal-time interpretation of our model (see Section~\ref{sec:ime}) to analyze the vulnerability of road networks under sequential failures. This application is motivated by foundational studies on network robustness \citep{Albert_Jeong_Barabasi_2000} and practical link-removal methodologies developed specifically to measure disruption impacts in traffic systems \citep{Jenelius_Petersen_Mattsson_2006}. Furthermore, we build upon existing literature that leverages the standard Kemeny constant for network regulation and control \citep{kirkland2010fastest, Crisostomi_Kirkland_Shorten_2011}, extending this global connectivity metric to our fully weighted Markovian graph framework.

Let the graph ${\cal G} =(\sV, \sE)$ represent a road network where nodes $\sV$ are intersections and edges $\sE$ are road segments. The elements $\mP(i,j)$ of the transition matrix model the routing choices of drivers, leading to a stationary occupancy measure $\vpi^*$ that represents the equilibrium traffic flow across the network. For each edge $(i,j) \in \sE$, the deterministic weight $\mW(i,j)$ models the physical travel time, which is controlled by the operator and inversely related to the speed limit. Because these traversal times are deterministic, they again trivially satisfy the i.i.d. requirement of (A$^*$). In this framework, the routing matrix $\mP$ can be viewed as the decentralized ``solution'' generated by drivers to achieve the occupancy measure $\vpi^*$ under the current speed limit regime $\mW$. Ultimately, the overall quality and global efficiency of this resulting traffic distribution are quantified by the weighted Kemeny constant. Following the discussion in Section~\ref{sec:KK}, the travel-time interpretation of the traffic model calls for $K(\mP, {\cal W})$, introduced in \eqref{travel_Kemeny_def}, which we adopt in this application.

Assume that a failure of a road element occurs, rendering a specific edge $(i,j)$ unusable; for example, an extreme weather event, an accident, or an obstruction on a road, that makes it completely unusable. The loss of a road alters the network topology, forcing the traffic flow $\mP(i,j)$ to zero (note that this is effectively not the same as setting $\mW(i,j)$ to zero, as the travel time model would interpret this as an instantaneous transition). We then distinguish the following policies corresponding to the drivers' reaction to the road failure:
\begin{enumerate}
    \item {\bf Unsupervised policy:} 
    If $ ( i , j ) $ fails, the traffic flow is redistributed to the other outgoing arcs of $ i$, that is, we set $ \mP ( i , j ) = 0$ and obtain $ \tilde \mP$ through row-normalization, when possible, and otherwise there is no feasible solution. Note that this policy does not enforce the occupancy measure 
    $\vpi^*$.

    \item {\bf Supervised policy:} The traffic flow after failure preserves the original stationary distribution $\vpi^*$, that is, we constrain $ \tilde \mP $ in our optimization so that $ \tilde \mP$ has stationary distribution $\vpi^*$, where we apply the method from \cite{franssen2025first}.
    
    \item {\bf Locally supervised policy:} We partition the node set $\sV$ into two disjoint subsets $\cal D$ and $\cal T$ such that $\mathcal{D} \cup \mathcal{T} = \sV$ and $\mathcal{D} \cap \mathcal{T} = \emptyset$. Here, the set $\cal D$ represents the set of important \textit{destinations}, and we aim to uphold the same long-term visit fractions as in the original network. On the other hand, we let $\cal T$ denote \textit{transit} nodes, corresponding to simple traffic intersections. This approach reflects the fact that drivers may be indifferent about what intersections they have to cross to reach their destination, the long-term fraction of visits can be more elastic.
\end{enumerate}

\begin{remark}
Deviating from the machine learning literature, we use  the term ``supervision'' to express the dependence of the
local decisions of the drivers on the target distribution $\vpi^*$. 
An ``unsupervised'' policy is allowed to organize the traffic network flow without regard of the target distribution.
\end{remark}

After the readjustment of traffic flow, the policymaker has the ability to control the traversal time matrix $ \mW$ by systematically altering speed limits to mitigate the impact of the failure.
This leads to the following problem: find a new speed limit matrix $ \tilde \mW$, so that the drivers flow that wants to realize occupancy $ \vpi^\ast $ changes to an adjusted traffic flow $ \tilde \mP$, with the constraint that the overall connectivity does not worsen, which is expressed by $ K ( \tilde \mP , \tilde {\cal W} )$ and $ V ( \tilde \mP , \tilde {\cal W} )$ not deteriorating, where $ V ( \tilde \mP , \tilde {\cal W} )$ denotes the second-order Kemeny constant from \eqref{sec-order Kemeny}.

The central problem for the traffic authority is to determine how to adjust the traversal times $\mW$ (via speed-limit regulations) on the \textit{remaining} operational links to mitigate the disruption's impact. Therefore, we choose to formulate the recovery as a \textit{constrained minimal intervention} problem,
\begin{subequations}
\begin{align}
    \min_{\mW} \quad &|| {\tilde \mW} - \mW ||^2\label{tn: objective}\\
    \text{subject to} \quad 
    & K (\mP, {\cal W})-K({\tilde \mP}, {\tilde {\cal W}}) \geq 0\label{tn: kemeny constraint}\, ,\\
    & \mW_{\rm MIN} \leq \mW \leq \mW_{\rm MAX}\label{tn: min-max constraint}\, ,\\
    & V(\mP, {\cal W})-V({\tilde \mP}, {\tilde {\cal W}})\geq 0\, , \label{tn: variance constraint}
\end{align}%
\end{subequations}%
where \eqref{tn: kemeny constraint} ensures the connectivity of the perturbed network is \textit{not worse} than the original network, \eqref{tn: min-max constraint} represents the physical bounds on the weights, and \eqref{tn: variance constraint} represents a variability constraint ensuring that the variance of travel times does not increase.

The optimization problem \eqref{tn: objective}-\eqref{tn: variance constraint} has a strictly convex objective and partially convex constraints. The Kemeny constraint \eqref{tn: kemeny constraint} is convex since \eqref{travel_Kemeny_def} is affine in $\mW$, and the box constraints~\eqref{tn: min-max constraint} are convex. However, the variance constraint~\eqref{tn: variance constraint} is non-convex, as numerical verification has shown that \eqref{sec-order Kemeny} is not a convex function of $\mW$, even for deterministic $\mW$. Therefore, \eqref{tn: objective}-\eqref{tn: variance constraint} is only guaranteed to find local optima.

For the \textit{minimal intervention} framework, we assume local optimality is sufficient, since our goal is to find \textit{small} feasible adjustments that preserve network performance, not necessarily the globally smallest such adjustment. The sequential nature of the framework ensures we track a continuous, physically meaningful trajectory through the weight space. Moreover, the distinction between local and global optima becomes negligible when the feasible set is small, as occurs when the network approaches its ``breakdown'' point.

The following lemma shows that a locally supervised policy can always be constructed.
\begin{lemma}[Locally supervised policy]\label{le:lsp}
For edge failure $ ( i , j )$, let $ \mP'$ denote the normalized version of $ \mP$ with entry $ ( i , j )$ set to zero, and denote the stationary distribution of $ \mP'$ by $ \vpi '$. Under the locally supervised policy, the adjusted occupancy measure is given by
\begin{equation}\label{hybrid_stat_dist}
    \widehat \vpi (i) := \begin{cases}
        \vpi^* (i) \;\;\; &\text{if } i \in {\cal D}\\
        \alpha \cdot \vpi' (i) \;\;\; &\text{if } i \in {\cal T}\, ,
    \end{cases}
\end{equation}
where $\alpha$ is a scaling factor required to ensure that $\sum_i \widehat \vpi(i) = 1$.
\end{lemma}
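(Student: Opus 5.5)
The plan is constructive. I will show that the two ingredients of the locally supervised policy, namely the original occupancy $\vpi^*$ on the destinations ${\cal D}$ and the unsupervised post-failure occupancy $\vpi'$ on the transit nodes ${\cal T}$, glue together into a well-defined probability vector $\widehat\vpi$. I will also show that the normalising constant $\alpha$ exists, is unique and is strictly positive. The formula \eqref{hybrid_stat_dist} then serves as the adjusted occupancy measure: it is the target distribution handed to the method of \cite{franssen2025first}, in the same way that $\vpi^*$ is the target under the supervised policy.

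\emph{Step 1: $\vpi'$ is well defined.} I will first show that $\mP'$ is a legitimate, irreducible transition matrix, exactly as required for the unsupervised policy to be feasible.
\begin{itemize}
\item Row-normalisation after setting $\mP(i,j)=0$ requires $\sum_{k\neq j}\mP(i,k)>0$, i.e.\ node $i$ keeps at least one outgoing edge.
\item The other rows are unchanged.
\item I will assume, as implicitly in the unsupervised policy, that $\hat{\cal G}$ with edge $(i,j)$ removed is still strongly connected.
\end{itemize}
Since $\mP'$ keeps exactly the surviving edges with positive probability, it is then irreducible. By {\bf (A)} and Perron--Frobenius it has a unique stationary distribution $\vpi'$, and $\vpi'(k)>0$ for every node $k$. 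This step is the only genuine obstacle. If the failure disconnects the graph, $\vpi'$ need not be unique or positive, and the statement must be read under that feasibility proviso; the unsupervised policy carries the same proviso, being declared infeasible in that case.

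\emph{Step 2: existence and positivity of $\alpha$.} The condition $\sum_k\widehat\vpi(k)=1$ is linear in $\alpha$ and gives
\[
\alpha=\frac{1-\sum_{k\in{\cal D}}\vpi^*(k)}{\sum_{k\in{\cal T}}\vpi'(k)} .
\]
\begin{itemize}
\item \emph{Denominator.} If ${\cal T}\neq\emptyset$, it is strictly positive by Step 1.
\item \emph{Numerator.} It equals $\sum_{k\in{\cal T}}\vpi^*(k)$, because $\vpi^*$ is a probability vector and ${\cal D}\cup{\cal T}=\sV$ is a disjoint union. This is strictly positive because $\vpi^*>0$, as the stationary distribution of the irreducible pre-failure chain.
\end{itemize}
Hence $\alpha$ is unique and $\alpha>0$. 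Consequently $\widehat\vpi>0$ componentwise and sums to one, so it is a valid occupancy measure.

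\emph{Step 3: consistency with the policy description.} On ${\cal D}$ the visit fractions are those of the original network, which is what the policy demands. On ${\cal T}$ the relative fractions $\widehat\vpi(k)/\widehat\vpi(k')=\vpi'(k)/\vpi'(k')$ coincide with those produced by the drivers' unsupervised redistribution, which reflects the elasticity of transit nodes. Finally, I will check the degenerate cases as sanity checks:
\begin{itemize}
\item ${\cal T}=\emptyset$ gives $\widehat\vpi=\vpi^*$, the supervised policy, with $\alpha$ void.
\item ${\cal D}=\emptyset$ gives $\alpha=1$ and $\widehat\vpi=\vpi'$, the unsupervised policy.
\end{itemize}
This shows that the construction always produces a locally supervised target interpolating between the two other policies.
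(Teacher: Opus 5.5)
Your proposal is correct and follows the paper's route: the paper's entire proof is your Step~2, which fixes the destination mass $M_{\cal D}=\sum_{j\in{\cal D}}\vpi^*(j)$ and solves the normalisation for $\alpha=(1-M_{\cal D})/\sum_{j\in{\cal T}}\vpi'(j)$. Your Steps~1 and~3 add rigor the paper leaves implicit: irreducibility of $\mP'$ under the connectivity proviso, positivity of $\alpha$, and the degenerate cases. Strictly, the proviso should be stated for the support graph of $\mP'$ rather than for $\hat{\cal G}$, since the paper only requires $\mP(i,j)\ge 0$ on edges.
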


{\bf Proof:}
Since the probability mass allocated to the destinations is kept fixed at $M_{\cal D} := \sum_{j \in {\cal D}} \vpi^*(j)$, the remaining probability mass available for the transit nodes must be $1 - M_{\cal D}$. With this insight, the scaling factor can easily be derived using simple algebra,
 $   \alpha = (1-M_{\cal D})/\sum_{j \in {\cal T}} \vpi' (j) $.
\hfill$ \square $

The construction in Lemma~\ref{le:lsp} ensures that the stationary distribution for all destination nodes is always preserved, while for the transit nodes, the probability distribution is normalized and rescaled to account for the other fixed probabilities. After constructing the hybrid stationary distribution $\widehat \vpi$, we can simply employ the projection algorithm described by \cite{franssen2025first} to find the transition matrix $\widehat \mP$ such that ${\widehat \vpi}{\widehat \mP} = {\widehat \vpi}$.

It is important to note that the failure of a critical link, or a sequence of links in a cascading failure scenario, may render the feasible set for \eqref{tn: objective}-\eqref{tn: variance constraint} empty. This occurs if the topological damage is so severe that no valid assignment of weights can restore the Kemeny constant or variance to within the acceptable bounds.

In the following, we present a simulation study of the different policies, where
we successively remove links, and adjust the speed limit matrix after each link removal.
For simulation, we construct a synthetic graph topology using geometric random graphs, as proposed in \cite{Penrose_2003}. Each edge in the graph $(i,j) \in \mathbb{E}$ is assigned an initial travel time $\mW(i,j)$ sampled from a uniform distribution between predefined minimum and maximum bounds. The lower bound was established at $2/3$ of the original weight, while the upper bound was set at $4/3$. We proceed by iteratively removing a random edge, simulating an unpredictable disruption. Before proceeding, we verify that the removal does not decompose the graph into disconnected components, as this would indicate that no traffic flow is possible between two components. Once the edge is removed, we employ one of the three policies, obtaining $\tilde \mP$, and optimize the weights until either projecting or optimizing becomes infeasible, resulting in $\tilde \mW$. During each iteration, we record the resulting Kemeny constant $K(\tilde \mP, \tilde{\cal W})$ and the second-order Kemeny constant $V(\tilde \mP, \tilde {\cal W})$. These metrics are then compared with an "unoptimized" counterpart, $K(\tilde \mP, {\cal W})$ and $V(\tilde \mP, {\cal W})$, serving as a baseline in which the topology is damaged and $\mW$ is not updated.
\begin{figure}[t]
    \centering
    \subfloat[]{\includegraphics[width=\linewidth]{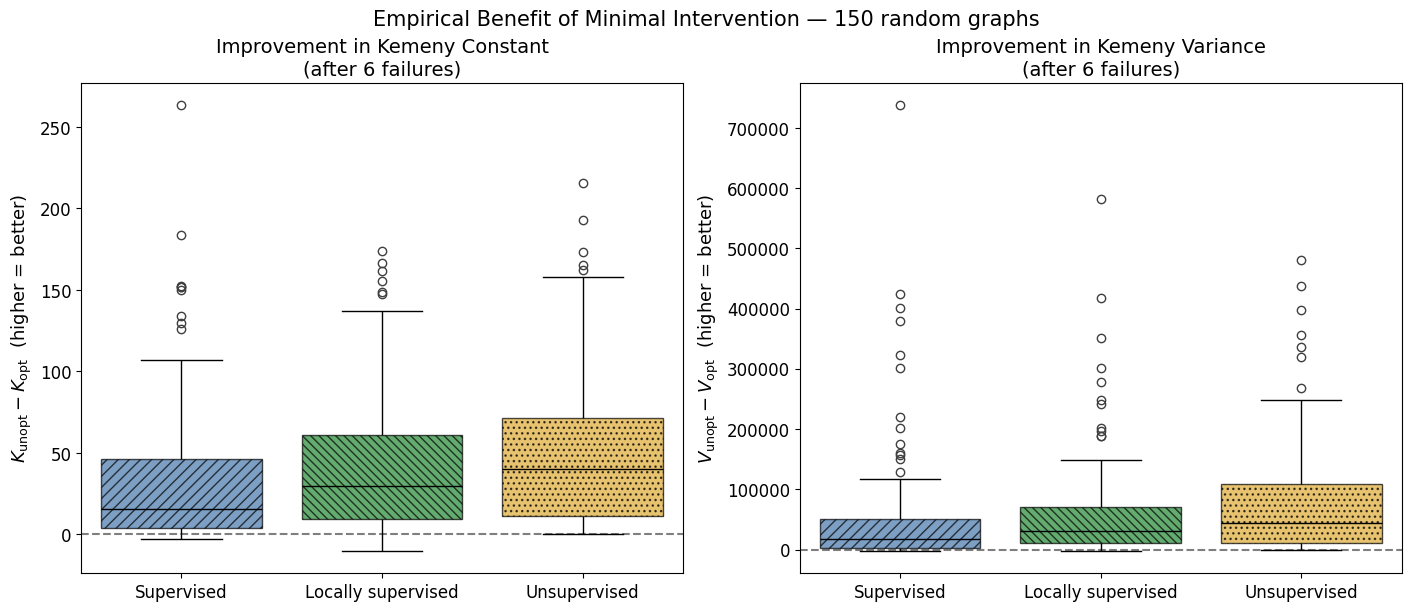}}\\
    \subfloat[]{\includegraphics[width=0.85\linewidth]{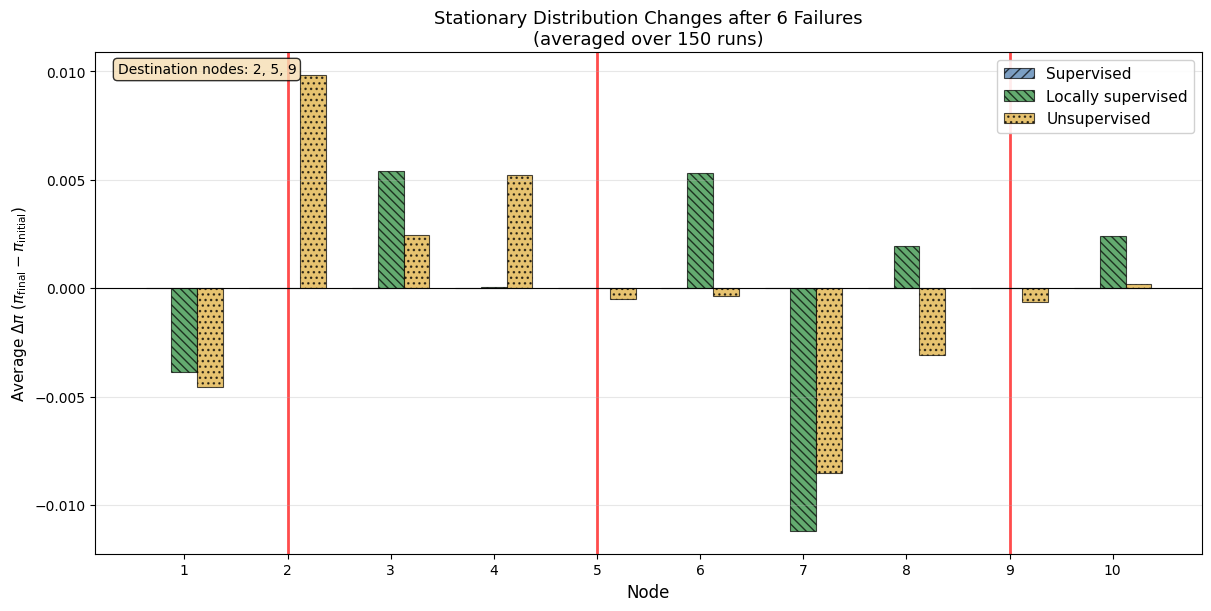}}
    \caption{(a) Aggregated improvement in the Weighted Kemeny Constant (left) and Variance (right) for 150 randomly generated geometric graphs with 10 nodes, $k=5$, and connection distances $r = \sqrt{d/(10 \pi)}$, where $d=5$ is the desired average degree. Nodes 2, 5, and 9 were arbitrarily chosen to represent destinations. (b) Comparison of the average change to the stationary distribution for all three policies.}
    \label{fig:boxplot_3way}
\end{figure}

\begin{table}[t]
\centering
\caption{Summary statistics for 150 cascading failure simulations comparing the three policies. 
}
\label{tab:summary_kemeny}
\small
\begin{tabular}{lrrr}
\toprule
\textbf{Policy} & \textbf{Supervised} & \textbf{Unsupervised} & \textbf{Locally supervised} \\
\midrule
{Successful runs}  & 109        & 123        & 88        \\
{Mean $\Delta K(\mP, {\cal W})$}   & 33.58   & 49.79    & 45.96    \\
{Mean $\Delta V(\mP, {\cal W})$} & 54,176.57 & 77,574.79 & 67,777.89 \\
{Mean $\Delta \vpi$} & 0.00 & 0.03 & 0.02\\
{Max $\Delta \vpi(i)$ for $i \in \cal D$} & 0.00 & 0.17 & 0.00\\
\bottomrule
\end{tabular}
\end{table}

Before analyzing the global efficiency, we must first verify that the regulatory policies behave as intended. Figure~\ref{fig:boxplot_3way} (b) illustrates the average change in the stationary distribution after the sequential failures. The unconstrained policy completely abandons the original occupancy measure, allowing (relatively) massive deviations in traffic flow across all nodes. In stark contrast, both the supervised and locally supervised policies perfectly preserve the exact pre-failure traffic flow at the critical destination nodes (in this case nodes 2, 5, and 9), exhibiting zero deviation. Moreover, the supervised policy shows no deviation in \textit{any} node, as expected. This confirms that our projection algorithm successfully forces the network to maintain vital service guarantees.

However, Figure~\ref{fig:boxplot_3way} (a) reveals the severe systemic cost of enforcing these strict service guarantees in terms of the global connectivity. The supervised policy severely restricts the feasible space for optimization, yielding the lowest mean $\Delta K(\mP, {\cal W})$ and even resulting in negative performance (whiskers dipping below zero) in certain topologies. On the other hand, while the unsupervised policy achieves massive improvements by allowing the network to naturally reorganize, it does so at the unacceptable cost of destroying the original service levels at the important destinations.

This reveals the key advantage of the locally supervised policy. By relaxing flow constraints purely on transit intersections while strictly protecting the service levels at the destination nodes, the optimizer reclaims a massive amount of the network's natural redundancy. As seen in Table~\ref{tab:summary_kemeny}, the locally supervised approach achieves a median $\Delta K(\mP, {\cal W})$ that vastly outperforms the supervised policy and approaches the theoretical upper bound set by the unsupervised policy. It represents a Pareto-optimal compromise: we successfully shield critical infrastructure from the effects of topological damage, while exploiting the elasticity of transit routes to maximize global network efficiency.

While the locally supervised policy achieves a Pareto-optimal trade-off when successful, the data also reveals a noticeably lower success rate compared to both other policies, as can be observed in Table~\ref{tab:summary_kemeny}. At first glance, this appears counterintuitive that a \textit{partially} relaxed policy would yield infeasible solutions more frequently than a strictly contained one. However, this phenomenon stems directly from the construction of the hybrid distribution in Lemma~\ref{le:lsp}. By artificially mixing the original pre-failure stationary probabilities, $\vpi^*$, and the post-failure naive ones, $\alpha \cdot \vpi'$, the policy demands a very specific corresponding transition matrix. In severely degraded, sparse topologies, the remaining edges simply lack the routing capacity to bridge these two differing regimes into one transition matrix. Consequently, the projection algorithm fails to find a suitable transition matrix, highlighting that hybrid resilience strategies are strictly bounded by the physical routing capacity. When increasing the average degree $d$ of the graph, the number of successful runs eventually increases towards the number of simulations for all three policies.

The trade-off is clear: local supervision is the most selective method, because it imposes partial constraints that must be satisfied exactly. However, when feasible, it delivers substantial optimization gains while preserving service guarantees at the designed destinations. The results indicate that the ``cost'' of fixing the stationary probability at important nodes is relatively low, provided that the network is still allowed to reorganize flow through the transit nodes. By relaxing constraints on the transit layer while pinning the service layer, the locally supervised policy recovers most of the optimization potential lost when fully preserving $\vpi^*$. The approach thus represents a principled balance: preserve what matters, release what does not, and exploit the resulting flexibility.

\section{Conclusion and Discussion}\label{sec:Conclusion}
In this paper, we introduced a random walk model that includes additional edge weights representing the cost of traversing an edge, decoupling the network's topological routing behavior from its physical travel costs. We obtained closed-form expressions for both the MWFPT and the VWFPT, as well as their partial derivatives with respect to our parameters of interest. A deliberate choice in these derivations is the use of the fundamental matrix as a generalized inverse, yielding an expression for the MWFPT that is linear in the weights $\mW$, which is not guaranteed for alternative generalized inverses. A primary theoretical contribution is the formulation of a novel weighted Kemeny constant and its natural extension to a second-order Kemeny constant, providing an additional metric for global network efficiency in terms of variance. We demonstrated the versatility of this model through two applications: optimizing patrol unpredictability in surveillance networks and preserving connectivity under cascading failures in traffic networks.

Several limitations and practical considerations should be noted. First, our variance results require the i.i.d.\ assumption (A$^*$) for the weight matrix, which excludes settings with temporally correlated travel times. Second, our model assumes a single random walker; extending to multiple interacting agents would require joint first-passage distributions, substantially increasing computational complexity. Third, the optimization problems we consider are generally non-convex and we rely on local optimization via SPSA. While local optimality suffices for the minimal-intervention framework, where the search targets small feasible adjustments, it leaves open the question of global optimality in general. On the practical side, both applications employ the null-space projection method of \citet{franssen2025first} for gradient-based optimization. While effective, the null-space perturbation vectors grow with the size of the transition matrix, which can lead to significant memory requirements.

A natural extension is to consider higher-order moments of weighted first-passage times, enabling risk-sensitive optimization beyond the mean-variance paradigm. Incorporating time-dependent weights would bring the model closer to real-world applications, though likely at the cost of closed-form tractability. The connection between the surprise index and game-theoretic patrolling formulations also deserves further exploration: a fully adversarial model of the intruder's best response could lead to Stackelberg equilibrium policies that integrate naturally with the variance-based framework developed here.

\section*{Acknowledgments}
A.~Zocca and R.~van der Burg are partially supported by the NWO VI.Vidi.233.247 grant \textit{Power Network Optimization in the Age of Climate Extremes} \url{https://doi.org/10.61686/GOOEL09973}.

\section*{Code and Data Disclosure}\label{sec:Code and Data Disclosure}The code and data to support the numerical experiments in this paper can be found at \url{https://github.com/thaoyuh/Weighted_Markovian_Graphs}.

\bibliographystyle{unsrtnat}
\bibliography{sample}  

@article{von2007tutorial,
  title={A tutorial on spectral clustering},
  author={Von Luxburg, Ulrike},
  journal={Statistics and computing},
  volume={17},
  number={4},
  pages={395--416},
  year={2007},
  publisher={Springer}
}

@article{masuda2017random,
  title={Random walks and diffusion on networks},
  author={Masuda, Naoki and Porter, Mason A and Lambiotte, Renaud},
  journal={Physics reports},
  volume={716},
  pages={1--58},
  year={2017},
  publisher={Elsevier}
}

@article{pearson,
  title={The problem of the random walk},
  author={Pearson, Karl},
  journal={Nature},
  volume={72},
  number={1867},
  pages={342--342},
  year={1905},
  publisher={Nature Publishing Group UK London}
}

@book{lawler2010random,
  title={Random walk: a modern introduction},
  author={Lawler, Gregory F and Limic, Vlada},
  volume={123},
  year={2010},
  publisher={Cambridge University Press}
}

@article{zhang2013random,
  title={Random walks on weighted networks},
  author={Zhang, Zhongzhi and Shan, Tong and Chen, Guanrong},
  journal={Physical Review E—Statistical, Nonlinear, and Soft Matter Physics},
  volume={87},
  number={1},
  pages={012112},
  year={2013},
  publisher={APS}
}

@article{newman2004finding,
  title={Finding and evaluating community structure in networks},
  author={Newman, Mark EJ and Girvan, Michelle},
  journal={Physical review E},
  volume={69},
  number={2},
  pages={026113},
  year={2004},
  publisher={APS}
}

@book{borkar2012hamiltonian,
  title={{H}amiltonian cycle problem and {M}arkov chains},
  author={Borkar, Vivek S and Ejov, Vladimir and Filar, Jerzy A and Nguyen, Giang T},
  volume={171},
  year={2012},
  publisher={Springer Science \& Business Media}
}

@incollection{sarkar2011random,
  title={Random walks in social networks and their applications: a survey},
  author={Sarkar, Purnamrita and Moore, Andrew W},
  booktitle={Social Network Data Analytics},
  pages={43--77},
  year={2011},
  publisher={Springer}
}

@book{jackson,
  title = {Social and Economic Networks},
  publisher = {Princeton University Press},
  author = {Jackson,  Matthew O.},
  year = {2010},
  month = nov 
}

@techreport{page1999pagerank,
  author      = {Page, Lawrence and Brin, Sergey and Motwani, Rajeev and Winograd, Terry},
  title       = {The PageRank Citation Ranking: Bringing Order to the Web},
  institution = {Stanford InfoLab},
  year        = {1999},
  type        = {Technical Report}
  }

@article{fogaras2005towards,
  author={Fogaras, D{\'a}niel and R{\'a}cz, Bal{\'a}zs and Csalog{\'a}ny, K{\'a}roly and Sarl{\'o}s, Tam{\'a}s},
  title   = {Towards Scaling Fully Personalized {PageRank}: {A}lgorithms, Lower Bounds, and Experiments},
  journal = {Internet Mathematics},
  volume  = {2},
  pages   = {333--358},
  year    = {2005}
  }

@article{haveliwala2003topic,
  author  = {Haveliwala, Taher H.},
  title   = {Topic-Sensitive {PageRank}: {A} Context-Sensitive Ranking Algorithm for Web Search},
  journal = {IEEE Transactions on Knowledge and Data Engineering},
  volume  = {15},
  pages   = {784--796},
  year    = {2003}
  }

@inproceedings{pan2004automatic,
  author    = {Pan, Jia-Yu and Yang, Hyung-Jeong and Faloutsos, Christos and Duygulu, Pinar},
  title     = {Automatic Multimedia Cross-Modal Correlation Discovery},
  booktitle = {Proceedings of the 10th ACM SIGKDD International Conference on Knowledge Discovery and Data Mining},
  year      = {2004},
  pages     = {653--658}
}

@article{shen2014lazy,
  author  = {Shen, Jianbing and Du, Yunfan and Wang, Wenguan and Li, Xuelong},
  title   = {Lazy Random Walks for Superpixel Segmentation},
  journal = {IEEE Transactions on Image Processing},
  volume  = {23},
  number  = {4},
  pages   = {1451--1462},
  year    = {2014},
  month   = {April}
}

@article{mccrea1936problem,
  title={A problem on random paths},
  author={McCrea, WH},
  journal={The Mathematical Gazette},
  volume={20},
  number={241},
  pages={311--317},
  year={1936},
  publisher={JSTOR}
}

@article{hunter2018computation,
  title={The computation of the mean first passage times for {M}arkov chains},
  author={Hunter, Jeffrey J},
  journal={Linear Algebra and its Applications},
  volume={549},
  pages={100--122},
  year={2018},
  publisher={Elsevier}
}

@article{ballal2022network,
  title={Network community detection and clustering with random walks},
  author={Ballal, Aditya and Kion-Crosby, Willow B and Morozov, Alexandre V},
  journal={Physical Review Research},
  volume={4},
  number={4},
  pages={043117},
  year={2022},
  publisher={APS}
}

@article{patel2015robotic,
  title={Robotic surveillance and Markov chains with minimal weighted {K}emeny constant},
  author={Patel, Rushabh and Agharkar, Pushkarini and Bullo, Francesco},
  journal={IEEE Transactions on Automatic Control},
  volume={60},
  number={12},
  pages={3156--3167},
  year={2015},
  publisher={IEEE}
}

@article{hunter1,
  title={Generalized inverses and their application to applied probability problems},
  author={Hunter, Jeffrey J},
  journal={Linear Algebra and its Applications},
  volume={45},
  pages={157--198},
  year={1982},
  publisher={Elsevier}
}

@article{hunter2,
  title={Characterizations of generalized inverses associated with {M}arkovian kernels},
  author={Hunter, Jeffrey J},
  journal={Linear Algebra and Its Applications},
  volume={102},
  pages={121--142},
  year={1988},
  publisher={Elsevier}
}

@book{hunter3,
  title={Mathematical techniques of applied probability: Discrete time models: Basic theory},
  author={Hunter, Jeffrey J},
  volume={1},
  year={2014},
  publisher={Academic Press}
}

@Book{kemeny1976finite,
  author    = {Kemeny, John G and Snell, J Laurie},
  title     = {Finite {M}arkov {C}hains {W}ith a {N}ew {A}ppendix "{G}eneralization of a {F}undamental {M}atrix"},
  publisher = {Springer},
  year      = {1976},
}

@article{fortunato2010community,
  title={Community detection in graphs},
  author={Fortunato, Santo},
  journal={Physics Reports},
  volume={486},
  number={3-5},
  pages={75--174},
  year={2010},
  publisher={Elsevier}
}

@article{berkhout2019analysis,
  title={Analysis of {M}arkov influence graphs},
  author={Berkhout, Joost and Heidergott, Bernd F},
  journal={Operations Research},
  volume={67},
  number={3},
  pages={892--904},
  year={2019},
  publisher={INFORMS}
}

@book{kemeny1969finite,
  title={Finite {M}arkov chains},
  author={Kemeny, John G and Snell, J Laurie and others},
  volume={26},
  year={1969},
  publisher={van Nostrand Princeton, NJ}
}

@article{kirkland2010fastest,
  title={Fastest expected time to mixing for a {M}arkov chain on a directed graph},
  author={Kirkland, Steve},
  journal={Linear Algebra and its Applications},
  volume={433},
  number={11-12},
  pages={1988--1996},
  year={2010},
  publisher={Elsevier}
}

@article{brin1998anatomy,
  title={The anatomy of a large-scale hypertextual web search engine},
  author={Brin, Sergey and Page, Lawrence},
  journal={Computer networks and ISDN systems},
  volume={30},
  number={1-7},
  pages={107--117},
  year={1998},
  publisher={Elsevier}
}

@article{guo2009mean,
  title={Mean-variance criteria for finite continuous-time {M}arkov decision processes},
  author={Guo, Xianping and Song, Xinyuan},
  journal={IEEE Transactions on Automatic Control},
  volume={54},
  number={9},
  pages={2151--2157},
  year={2009},
  publisher={IEEE}
}

@article{xia2020risk,
  title={Risk-sensitive {M}arkov decision processes with combined metrics of mean and variance},
  author={Xia, Li},
  journal={Production and Operations Management},
  volume={29},
  number={12},
  pages={2808--2827},
  year={2020},
  publisher={SAGE Publications Sage CA: Los Angeles, CA}
}

@article{Markowitz1952,
 ISSN = {00221082, 15406261},
 author = {Harry Markowitz},
 journal = {The Journal of Finance},
 number = {1},
 pages = {77--91},
 publisher = {[American Finance Association, Wiley]},
 title = {Portfolio Selection},
 urldate = {2026-01-19},
 volume = {7},
 year = {1952}
}

@book{magnus2019matrix,
  title={Matrix differential calculus with applications in statistics and econometrics},
  author={Magnus, Jan R and Neudecker, Heinz},
  year={2019},
  publisher={John Wiley \& Sons}
}

@article{franssen2025first,
  title={A First-Order Gradient Approach for the Connectivity Optimization of {M}arkov Chains},
  author={Franssen, Christian PC and Zocca, Alessandro and Heidergott, Bernd F},
  journal={IEEE Transactions on Automatic Control},
  year={2025},
  publisher={IEEE}
}

@article{Bienstock_Verma_2010, title={The $N-k$ Problem in Power Grids: New Models, Formulations, and Numerical Experiments}, volume={20}, number={5}, journal={SIAM Journal on Optimization}, author={Bienstock, Daniel and Verma, Abhinav}, year={2010}, month=jan, pages={2352–2380}}

@article{ellens_2011,
author={W. Ellens and F.M. Spieksma and P. Van Mieghem and A. Jamakovic and R.E. Kooij},
title={Effective graph resistance},
journal={Linear Algebra and its Applications},
volume={435},
number={10},
year={2011},
pages={2491-2506}
}

@article{duan2021stochastic,
  title={Stochastic strategies for robotic surveillance as {S}tackelberg games},
  author={Duan, Xiaoming and Paccagnan, Dario and Bullo, Francesco},
  journal={IEEE Transactions on Control of Network Systems},
  volume={8},
  number={2},
  pages={769--780},
  year={2021},
  publisher={IEEE}
}

@article{george2018markov,
  title={Markov chains with maximum entropy for robotic surveillance},
  author={George, Mishel and Jafarpour, Saber and Bullo, Francesco},
  journal={IEEE Transactions on Automatic Control},
  volume={64},
  number={4},
  pages={1566--1580},
  year={2018},
  publisher={IEEE}
}

@article{duan2019markov,
  title={Markov chains with maximum return time entropy for robotic surveillance},
  author={Duan, Xiaoming and George, Mishel and Bullo, Francesco},
  journal={IEEE Transactions on Automatic Control},
  volume={65},
  number={1},
  pages={72--86},
  year={2019},
  publisher={IEEE}
}

@article{duan2021markov,
  title={Markov chain--based stochastic strategies for robotic surveillance},
  author={Duan, Xiaoming and Bullo, Francesco},
  journal={Annual Review of Control, Robotics, and Autonomous Systems},
  volume={4},
  number={1},
  pages={243--264},
  year={2021},
  publisher={Annual Reviews}
}

@article{altafini2023edge,
  title={An edge centrality measure based on the {K}emeny constant},
  author={Altafini, Diego and Bini, Dario A and Cutini, Valerio and Meini, Beatrice and Poloni, Federico},
  journal={SIAM Journal on Matrix Analysis and Applications},
  volume={44},
  number={2},
  pages={648--669},
  year={2023},
  publisher={SIAM}
}

@article{bini2024kemeny,
  title={On {K}emeny's constant and stochastic complement},
  author={Bini, Dario Andrea and Durastante, Fabio and Kim, Sooyeong and Meini, Beatrice},
  journal={Linear Algebra and its Applications},
  volume={703},
  pages={137--162},
  year={2024},
  publisher={Elsevier}
}

@article{Albert_Jeong_Barabasi_2000,
title={Error and attack tolerance of complex networks}, 
volume={406}, 
number={6794}, 
journal={Nature}, 
author={Albert, Réka and Jeong, Hawoong and Barabási, Albert-László}, year={2000}, month={july}, pages={378–382}, language={en} }

@article{Jenelius_Petersen_Mattsson_2006, title={Importance and exposure in road network vulnerability analysis}, volume={40}, number={7}, journal={Transportation Research Part A: Policy and Practice}, author={Jenelius, Erik and Petersen, Tom and Mattsson, Lars-Göran}, year={2006}, month=aug, pages={537–560} }

@article{Crisostomi_Kirkland_Shorten_2011, title={A {G}oogle-like model of road network dynamics and its application to regulation and control}, volume={84}, number={3}, journal={International Journal of Control}, author={Crisostomi, E. and Kirkland, S. and Shorten, R.}, year={2011}, month=mar, pages={633–651}, language={en} }

@book{Penrose_2003, title={Random Geometric Graphs}, publisher={Oxford University Press}, author={Penrose, Mathew}, year={2003}, month=may }


\newpage
\begin{appendices}
\section{Proof of Theorem~\ref{th: second momemt}}\label{sec:prooftheorem2}

We begin by computing the second moment of the first return times. 
Next, we derive an implicit equation for $\mM^{(2)}$.
By computation,
\begin{dmath*}
\tau^2 ( i , j )  =  \mP ( i , j )  {\cal W}^{(2)} ( i ,j ) + \sum_{k \not = j }\mP ( i , k ) \Big  ( {\cal W}  ( i , k ) + \tau ( k ,j )   \Big )^2   =   \mP ( i , j )   {\cal W}^{2} ( i ,j )  + \sum_{k \not = j }\mP ( i , k ) \Big (  {\cal W}^2 ( k ,j  )   +  2  {\cal W} ( i , k )  \tau ( k ,j )   +  \tau^2 ( k , j )   \Big ) .
\end{dmath*}
Taking expected values gives
\begin{dmath*}
 \mM^{(2)} ( i , j ) 
 = \mP ( i , j )   \mW^{(2)} ( i ,j )  + \sum_{k \not = j }\mP ( i , k ) \Big (  \mW^{(2)} ( k ,j  )   +  2  \mW ( i , k )  \mM( k ,j )   +  \mM^{(2)} ( k , j )  \Big )
 = \sum_{k  } \mP ( i , k )  \mW^2 ( i ,k ) +\sum_{k \not = j }\mP ( i , k )  \mM^{(2)} (k ,j )  +  2 \sum_{k \not = j }\mP ( i , k )   \mW ( i ,k )  \mM ( k ,j ) .
\end{dmath*}
We note that we use $ \mathbb{E} [{\cal W} ( i , k )  \tau ( k ,j ) ] =\mW ( i ,k )  \mM ( k ,j ) $ in the above computation, which is allowed due to assumption (A$^*$).
Indeed, it might happen that the random walker after the transition $ (i , k )$ traverses $ ( i , k ) $ again, which is captured in $ \tau ( k ,j )$. 
However, due to the assumption of independence, such weights are independent of the weight $ {\cal W} ( i , k ) $ appearing in the above equation.

In matrix notation,
\begin{dmath*}
\mM^{(2)} =  \big (\mP \circ \mW^{(2)}  \big ) {\bf 1} {\bf 1}^\top  + \mP ( \mM^{(2)} - [ \mM^{(2)}]_{ \rm{dg}} )
+ 2
\sum_{k \not = j } (\mP   \circ  \mW ) ( i , k )   \mM ( k ,j )
= 
\big ( \mP \circ \mW^{ (2)}  \big ) {\bf 1} {\bf 1}^\top + \mP \big ( \mM^{(2)} - [ \mM^{(2)}]_{ \rm{dg}}  \big )
+ 2
(\mP   \circ  \mW ) \big (   \mM - [ \mM]_{ \rm{dg}}  \big )\, ,
\end{dmath*}
resulting in
\begin{dmath*}
(\mI - \mP)\mM^{(2)} = \big ( \mP \circ \mW^{ (2)}\big ) {\bf 1} {\bf 1}^\top - \mP \big [ \mM^{(2)}\big ]_{ \rm{dg}}
+ 2
(\mP   \circ  \mW ) \big (   \mM - [ \mM]_{ \rm{dg}}  \big )\, .
\end{dmath*}

The next step of the proof is to solve the diagonal of 
$ \mM^{ (2)}$.
Multiplying the above by $ \pi $ gives
\begin{dmath*}
\pi \, \mM^{(2)}  = 
\pi 
\big ( \mP \circ \mW^{ (2)} \big ) {\bf 1} {\bf 1}^\top + \pi  \mP \big ( \mM^{(2)} - [ \mM^{(2)}]_{ \rm{dg}}  \big )
+ 2
\pi  (\mP   \circ  \mW ) \big (   \mM - [ \mM]_{ \rm{dg}}  \big ) 
\end{dmath*}
This gives
\begin{dmath}\label{diagonal sec moment}
\pi [ \mM^{(2)}]_{ \rm{dg}}  
=  \pi ( \mP \circ \mW^{(2)} ) {\bf 1} {\bf 1}^\top 
+
2 \pi  (\mP   \circ  \mW ) \big (   \mM - [ \mM]_{ \rm{dg}}  \big )  
\end{dmath}
and we obtain for the diagonal elements
\begin{dmath*}
\mM^{(2)} ( i , i )   
= \frac{1}{\pi ( i ) }\Big ( 
\pi ( \mP \circ \mW^{(2)} ) {\bf 1} {\bf 1}^\top 
+
2 \pi  (\mP   \circ  \mW ) \big (   \mM - [ \mM]_{ \rm{dg}}  \big )  
\Big )  ( i ) .
\end{dmath*}
Noting that the $ i$-the entry of 
$  \pi ( \mP \circ \mW^{(2)} ) {\bf 1} {\bf 1}^\top $ is equal
to $ \pi ( \mP \circ \mW^{(2)} ) {\bf 1} $ and independent of $ i$, yields the claimed expression for the diagonal elements.

Again, we will rely on the $\mZ$ as the generalized inverse of $(\mI - \mP)$ to find a closed-form formula for $\mM^{(2)}$. First, we show that $\mZ$ is again consistent, that is, it solves,
\begin{dmath*}
    (\mI-(\mI-\mP)\mZ)\left((\mP\circ \mW^{(2)}) {\bf 1} {\bf 1}^\top
    +
    2(\mP \circ \mW)(\mM - [\mM]_{\rm{dg}} )
    -
    \mP[\mM^{(2)}]_{\rm{dg}}\right) = 0\, .
\end{dmath*}
Recall that,
\[
 (\mI-(\mI-\mP)\mZ) = \Pi\, ,
\]
and thus the left-hand side can be simplified to,
\begin{dmath*}
    \Pi\left((\mP\circ \mW^{(2)}) {\bf 1} {\bf 1}^\top
    +
    2(\mP \circ \mW)(\mM - [\mM]_{\rm{dg}} )
    -
    \mP[\mM^{(2)}]_{\rm{dg}}\right)\nonumber
    = {\bf 1} \Big(\vpi( \mP\circ \mW^{(2)}) {\bf 1} {\bf 1}^\top
    +
    2\vpi(\mP \circ \mW)(\mM - [\mM]_{\rm{dg}} )
    -
    \vpi\mP[\mM^{(2)}]_{\rm{dg}}\Big)\nonumber
    = {\bf 1} \Big(\vpi( \mP\circ \mW^{(2)}) {\bf 1} {\bf 1}^\top
    +
    2\vpi(\mP \circ \mW)(\mM - [\mM]_{\rm{dg}} )
    -
    \vpi[\mM^{(2)}]_{\rm{dg}}\Big)\, . \nonumber
\end{dmath*}
By directly substituting \eqref{diagonal sec moment} into this equation, we see that all terms cancel out, resulting in ${\bf 1} 0 = 0$. Thus $\mZ$ is consistent once more, and, according to the generalized inverse theory provided earlier, a solution to the system of equations is given by
\begin{dmath*}
\mM^{(2)} = \mZ \Big( (\mP \circ \mW^{(2)} ) {\bf 1} {\bf 1}^\top
+
2(\mP \circ \mW)\big(\mM - [\mM]_{\rm{dg}} \big)
-
\mP [\mM^{(2)}]_{\rm{dg}}
\Big) + \Pi\tilde{\mU}\, ,
\end{dmath*}
where $\tilde{\mU}$ is again an arbitrary matrix, distinct from the earlier used $\mU$. Since $\Pi$ has equal rows, so has $\Pi \tilde{\mU}$ and we can write
\begin{dmath}\label{eq: still have to solve for tilde u}
\mM^{(2)} = \mZ \Big( (\mP \circ \mW^{(2)} ) {\bf 1} {\bf 1}^\top
+
2(\mP \circ \mW)\big(\mM - [\mM]_{\rm{dg}} \big)
-
\mP [\mM^{(2)}]_{\rm{dg}}
\Big) + {\bf 1} \tilde{\vu}^\top \, ,
\end{dmath}
for some vector $\tilde{\vu}$.

We can solve $\tilde{\vu}$ by observing the diagonal entries of \eqref{eq: still have to solve for tilde u}
\begin{dmath*}
{\rm{dg}}(\tilde{\vu}) = [\mM^{(2)}]_{\rm{dg}} - \Big[\mZ \Big( (\mP \circ \mW^{(2)} ) {\bf 1} {\bf 1}^\top
+
2(\mP \circ \mW)\big(\mM - [\mM]_{\rm{dg}} \big)
-
\mP [\mW^{(2)}]_{\rm{dg}}
\Big)\Big]_{\rm{dg}}\nonumber
=\big(\mI + [\mZ\mP]_{\rm{dg}} \big)[\mM^{(2)}]_{\rm{dg}}
-
\big[ \mZ (\mP \circ \mW^{(2)} ) {\bf 1} {\bf 1}^\top \big]_{\rm{dg}}
-
2 \big[ \mZ (\mP \circ \mW)(\mM - [\mM]_{\rm{dg}})\big]_{\rm{dg}}
\, . \nonumber
\end{dmath*}
From simple algebra we know that ${\bf 1} \tilde{\vu} = {\bf 1} {\bf 1}^\top {\rm{dg}}(\tilde{\vu})$, which we can apply directly to \eqref{eq: still have to solve for tilde u},
\begin{dmath}
\mM^{(2)} = \mZ \Big( (\mP \circ \mW^{(2)} ) {\bf 1} {\bf 1}^\top
+
2(\mP \circ \mW)\big(\mM - [\mM]_{\rm{dg}} \big)
-
\mP [\mM^{(2)}]_{\rm{dg}}
\Big)\nonumber 
+ {\bf 1} {\bf 1}^\top \Big( \big(\mI + [\mZ\mP]_{\rm{dg}} \big)[\mM^{(2)}]_{\rm{dg}}
-
\big[ \mZ (\mP \circ \mW^{(2)} ) {\bf 1} {\bf 1}^\top \big]_{\rm{dg}}\Big)\nonumber
- 2 \, {\bf 1} {\bf 1}^\top \big[ \mZ (\mP \circ \mW)(\mM - [\mM]_{\rm{dg}})\big]_{\rm{dg}}\nonumber
= \mZ (\mP \circ \mW^{(2)}) {\bf 1} {\bf 1}^\top
- {\bf 1} {\bf 1}^\top \big[ \mZ(\mP \circ \mW^{(2)}) {\bf 1} {\bf 1}^\top\big]_{\rm{dg}}\nonumber
+ 2 \Big(
\mZ ( \mP \circ \mW)(\mM - [\mM]_{\rm{dg}})
-
{\bf 1} {\bf 1}^\top \big[ \mZ ( \mP \circ \mW) (\mM - [\mM]_{\rm{dg}})\big]_{\rm{dg}}
\Big)\nonumber
+ \Big(
{\bf 1} {\bf 1}^\top - \mZ\mP + {\bf 1} {\bf 1}^\top [\mZ\mP]_{\rm{dg}}
\Big) [\mM^{(2)}]_{\rm{dg}}\, . \label{insert equality here}
\end{dmath}
Recall that
\[
{\bf 1} {\bf 1}^\top - \mZ \mP  + [ \mZ \mP ]_{\rm{dg}}
= \mI - \mZ + {\bf 1} {\bf 1}^\top [ \mZ ]_{\rm{dg}}  .
\]
Inserting this equality into \eqref{insert equality here}, we arrive at the final representation:
\begin{dmath*}
\mM^{(2)} = 
\mZ ( \mP \circ \mW^{(2)} ) {\bf 1} {\bf 1}^\top
-
{\bf 1} {\bf 1}^\top \Big[ \mZ ( \mP \circ \mW^{(2)}) {\bf 1} {\bf 1}^\top \Big] \nonumber \\
+
2 \Big( \mZ ( \mP \circ \mW) \big( \mM - [ \mM]_{ \rm{dg}}\big)
-
{\bf 1} {\bf 1}^\top \Big[ \mZ ( \mP \circ \mW) \big( \mM - [ \mM]_{ \rm{dg}}\big) \Big]_{\rm{dg}} \Big) \nonumber \\
+
\big( \mI - \mZ + {\bf 1} {\bf 1}^\top [\mZ]_{\rm{dg}} \big) \big[ \mM^{(2)}\big]_{\rm{dg}}\, . \nonumber
\end{dmath*}
This concludes the proof.

\section{Proof of Lemma~\ref{lem:derivative_pi_w}\label{proof of lemma 4}}

From equation \eqref{pi_W}, we have:
\[
\vpi_{ \cal W}(i) = \frac{\vpi(i)\bar{U}(i)}{Y}, \quad \text{where } Y = \sum_{j \in \sV} \vpi(j)\bar{U}(j)
\]
Taking the derivative of the numerator with respect to $\mW(l,k)$:
\begin{align*}
    \frac{\partial[\vpi(i)\bar{U}(i)]}{\partial \mW(l,k)} &= \vpi(i)\frac{\partial \bar{U}(i)}{\partial \mW(l,k)}\\
&= \vpi(i)\frac{\partial}{\partial \mW(l,k)}\sum_{j \in \sV}\mP(i,j)\mW(i,j)\\
&= \begin{cases}
\vpi(i)\mP(i,k) & \text{if } l = i ,\\
0 & \text{otherwise.}
\end{cases}
\end{align*}
Taking the derivative of the denominator:
\begin{align*}
    \frac{\partial Y}{\partial \mW(l,k)} &= \frac{\partial}{\partial \mW(l,k)}\sum_{j \in \sV}\vpi(j)\bar{U}(j)\\
&= \vpi(l)\frac{\partial \bar{U}(l)}{\partial \mW(l,k)}
= \vpi(l)\mP(l,k) .
\end{align*}
Applying the quotient rule:
\begin{dmath*}
    \frac{\partial \vpi_{ \cal W}(i)}{\partial \mW(l,k)} = \frac{1}{Y^2} \left[ \frac{\partial[\vpi(i)\bar{U}(i)]}{\partial \mW(l,k)} \cdot Y
     - \vpi(i)\bar{U}(i) \cdot \frac{\partial Y}{\partial \mW(l,k)} \right] .
\end{dmath*}
For $l = i$:
\begin{dmath*}
    \frac{\partial \vpi_{ \cal W}(i)}{\partial \mW(i,k)} = \frac{1}{Y^2}\left[\vpi(i)\mP(i,k) \cdot Y - \vpi(i)\bar{U}(i) \cdot \vpi(i)\mP(i,k)\right]
= \frac{\vpi(i)\mP(i,k)}{Y^2}\left[Y - \vpi(i)\bar{U}(i)\right] .
\end{dmath*}
For $l \neq i$:
\begin{dmath*}
    \frac{\partial \vpi_{ \cal W}(i)}{\partial \mW(l,k)} = \frac{1}{Y^2}\left[0 \cdot Y - \vpi(i)\bar{U}(i) \cdot \vpi(l)\mP(l,k)\right]
= -\frac{\vpi(i)\bar{U}(i)\vpi(l)\mP(l,k)}{Y^2}.
\end{dmath*}
This concludes the proof.

\section{Derivative of Second Moment, Mean Weights} \label{derivative_second_moment}

\begin{lemma}\label{lem:derivative_second_moment}
Provided that (A) and (A$^*$) hold, the partial derivative of the second moment matrix $\mM^{(2)}$ with respect to $\mW(l,k)$ is given by:
\begin{dmath*}
\frac{\partial \mM^{(2)}}{\partial \mW(l,k)} = 2\mW(l,k)\mP(l,k)\left[\mZ\mathbf{e}_l\mathbf{1}^\top - \mathbf{1}\mathbf{1}^\top [\mZ\mathbf{e}_l\mathbf{1}^\top]_{\rm{dg}}\right]
+ 2\mP(l,k)\left[\mZ\mathbf{e}_l\mathbf{e}_k^\top(\mM - [\mM]_{\rm{dg}}) - \mathbf{1}\mathbf{1}^\top [\mZ\mathbf{e}_l\mathbf{e}_k^\top(\mM - [\mM]_{\rm{dg}})]_{\rm{dg}}\right]  
+ 2\left[\mZ(\mP \circ \mW)\left(\frac{\partial \mM}{\partial \mW(l,k)} - \left[\frac{\partial \mM}{\partial \mW(l,k)}\right]_{\rm{dg}}\right)
- \mathbf{1}\mathbf{1}^\top \left[\mZ(\mP \circ \mW)\left(\frac{\partial \mM}{\partial \mW(l,k)} - \left[\frac{\partial \mM}{\partial \mW(l,k)}\right]_{\rm{dg}}\right)\right]_{\rm{dg}}\right]
+ (\mI - \mZ + \mathbf{1}\mathbf{1}^\top [\mZ]_{\rm{dg}})\frac{\partial [\mM^{(2)}]_{\rm{dg}}}{\partial \mW(l,k)} ,
\end{dmath*}
where $\frac{\partial \mM}{\partial \mW(l,k)}$ is given by Lemma~\ref{lem:derivative_mfpt}, and the diagonal derivative $\frac{\partial [\mM^{(2)}]_{\rm{dg}}}{\partial \mW(l,k)}$ has the explicit form
\begin{dmath*}
\frac{\partial \mM^{(2)}(i,i)}{\partial \mW(l,k)} = \frac{2\mW(l,k)\vpi(l)\mP(l,k)}{\vpi(i)}+ \frac{2\vpi(l)\mP(l,k)[\mM - [\mM]_{\rm{dg}}](i,k)}{\vpi(i)} + \frac{2\left(\vpi(\mP \circ \mW)\left(\frac{\partial \mM}{\partial \mW(l,k)} - \left[\frac{\partial \mM}{\partial \mW(l,k)}\right]_{\rm{dg}}\right)\right)(i)}{\vpi(i)}.
\end{dmath*}
\end{lemma}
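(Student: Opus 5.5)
The plan is to differentiate the closed-form representation of $\mM^{(2)}$ from Theorem~\ref{th: second momemt} term by term, treating $\mP$, $\vpi$, $\Pi$ and $\mZ$ as constants (none of them depends on $\mW$), and to work in the deterministic-weight setting of Remark~\ref{rem:deterministic_rewards}, where $\mW^{(2)}=\mW\circ\mW$. Write
\[
\mM^{(2)} = \mathcal{T}_1 + 2\mathcal{T}_2 + \big(\mI-\mZ+\mathbf{1}\mathbf{1}^\top[\mZ]_{\rm dg}\big)[\mM^{(2)}]_{\rm dg},
\]
with $\mathcal{T}_1=\mZ(\mP\circ\mW^{(2)})\mathbf{1}\mathbf{1}^\top-\mathbf{1}\mathbf{1}^\top[\mZ(\mP\circ\mW^{(2)})\mathbf{1}\mathbf{1}^\top]_{\rm dg}$ and $\mathcal{T}_2=\mZ(\mP\circ\mW)(\mM-[\mM]_{\rm dg})-\mathbf{1}\mathbf{1}^\top[\mZ(\mP\circ\mW)(\mM-[\mM]_{\rm dg})]_{\rm dg}$. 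The operators $[\cdot]_{\rm dg}$ and left multiplication by constant matrices are linear, so they commute with $\partial/\partial\mW(l,k)$.

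For $\mathcal{T}_1$, Remark~\ref{rem:deterministic_rewards} gives $\partial(\mP\circ\mW^{(2)})/\partial\mW(l,k)\,\mathbf{1}=2\mW(l,k)\mP(l,k)\mathbf{e}_l$. Substituting this yields the first bracket, $2\mW(l,k)\mP(l,k)[\mZ\mathbf{e}_l\mathbf{1}^\top-\mathbf{1}\mathbf{1}^\top[\mZ\mathbf{e}_l\mathbf{1}^\top]_{\rm dg}]$. For $\mathcal{T}_2$ I would apply the product rule to $(\mP\circ\mW)\cdot(\mM-[\mM]_{\rm dg})$. The derivative of the first factor is $\mP(l,k)\mathbf{e}_l\mathbf{e}_k^\top$, which gives the second bracket. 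The derivative of the second factor is $\partial\mM/\partial\mW(l,k)-[\partial\mM/\partial\mW(l,k)]_{\rm dg}$, with $\partial\mM/\partial\mW(l,k)$ taken from Lemma~\ref{lem:derivative_mfpt}, and this gives the third bracket. The last term differentiates simply to $(\mI-\mZ+\mathbf{1}\mathbf{1}^\top[\mZ]_{\rm dg})\,\partial[\mM^{(2)}]_{\rm dg}/\partial\mW(l,k)$.

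The main obstacle is that this last term reintroduces the unknown diagonal derivative, so the formula is implicit until that diagonal is determined. I would not extract it from the matrix identity itself, which only reproduces itself on the diagonal. Instead I would differentiate the explicit diagonal formula of Theorem~\ref{th: second momemt},
\[
\vpi(i)\mM^{(2)}(i,i)=\vpi(\mP\circ\mW^{(2)})\mathbf{1}+2\big(\vpi(\mP\circ\mW)(\mM-[\mM]_{\rm dg})\big)(i),
\]
which is free of $\mM^{(2)}$ on the right-hand side. Three contributions arise:
\begin{itemize}
\item The first term gives $\vpi\,2\mW(l,k)\mP(l,k)\mathbf{e}_l=2\mW(l,k)\vpi(l)\mP(l,k)$.
\item The product-rule piece from $\mP\circ\mW$ gives $2\vpi(l)\mP(l,k)\,\mathbf{e}_k^\top(\mM-[\mM]_{\rm dg})\mathbf{e}_i$. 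This must be read as the $(k,i)$ entry, and I would check the stated index ordering carefully at this point.
\item The remaining piece gives $2\big(\vpi(\mP\circ\mW)(\partial\mM-[\partial\mM]_{\rm dg})\big)(i)$.
\end{itemize}
Dividing by $\vpi(i)$ gives the stated diagonal formula. Substituting it back into the last term closes the expression, which is then explicit in known quantities.
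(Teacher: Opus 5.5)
Your proposal is correct and follows the paper's proof essentially step for step: differentiate the closed form from Theorem~\ref{th: second momemt} term by term, with $\mP,\vpi,\mZ$ fixed and, as the paper does, deterministic weights so that $\partial(\mP\circ\mW^{(2)})/\partial\mW(l,k)=2\mW(l,k)\mP(l,k)\mathbf{e}_l\mathbf{e}_k^\top$; then close the implicit diagonal term by differentiating the explicit diagonal formula. Your index check is justified and should be stated outright rather than hedged: the product-rule contribution is $\vpi(l)\mP(l,k)\,(\mM-[\mM]_{\rm dg})(k,i)$, so the $(i,k)$ entry in the statement (and in the paper's own proof) is a transposition slip that your argument correctly avoids.
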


\noindent \textbf{Proof:}

We begin by noting that for deterministic weights, $\frac{\partial \mW^{(2)}(i,j)}{\partial \mW(l,k)} = 2\mW(l,k)\delta_{il}\delta_{jk}$, and thus 
\begin{dmath*}
    \frac{\partial(\mP \circ \mW^{(2)})}{\partial \mW(l,k)} = 2\mW(l,k)\mP(l,k)\mathbf{e}_l\mathbf{e}_k^\top.
\end{dmath*}
For the proof of the first part of the lemma, we start by noting that Theorem~ \ref{th: second momemt} yields 
\begin{dmath*}
\mM^{(2)} = \mZ(\mP \circ \mW^{(2)})\mathbf{1}\mathbf{1}^\top - \mathbf{1}\mathbf{1}^\top [\mZ(\mP \circ \mW^{(2)})\mathbf{1}\mathbf{1}^\top]_{\rm{\dg}} 
+ 2[\mZ(\mP \circ \mW)(\mM - [\mM]_{\rm{dg}}) - \mathbf{1}\mathbf{1}^\top [\mZ(\mP \circ \mW)(\mM - [\mM]_{\rm{dg}})]_{\rm{dg}}]
+ (\mI - \mZ + \mathbf{1}\mathbf{1}^\top [\mZ]_{\rm{dg}})[\mM^{(2)}]_{\rm{dg}}.
\end{dmath*}
In the following, we take the derivative of each term:

\noindent
First term:
\begin{dmath*}
\frac{\partial}{\partial \mW(l,k)}[\mZ(\mP \circ \mW^{(2)})\mathbf{1}\mathbf{1}^\top] = \mZ\frac{\partial(\mP \circ \mW^{(2)})}{\partial \mW(l,k)}\mathbf{1}\mathbf{1}^\top
= \mZ \cdot 2\mW(l,k)\mP(l,k)\mathbf{e}_l\mathbf{e}_k^\top\mathbf{1}\mathbf{1}^\top
= 2\mW(l,k)\mP(l,k)\mZ\mathbf{e}_l\mathbf{1}^\top ,
\end{dmath*}
Second term:
\begin{dmath*}
\frac{\partial}{\partial \mW(l,k)}[\mathbf{1}\mathbf{1}^\top [\mZ(\mP \circ \mW^{(2)})\mathbf{1}\mathbf{1}^\top]_{\rm{\dg}}] = \mathbf{1}\mathbf{1}^\top \left[\frac{\partial}{\partial \mW(l,k)}[\mZ(\mP \circ \mW^{(2)})\mathbf{1}\mathbf{1}^\top]\right]_{\rm{dg}} 
= 2\mW(l,k)\mP(l,k)\mathbf{1}\mathbf{1}^\top [\mZ\mathbf{e}_l\mathbf{1}^\top]_{\rm{dg}},
\end{dmath*}
Third term (with factor 2):
\begin{dmath*}
\frac{\partial}{\partial \mW(l,k)}[\mZ(\mP \circ \mW)(\mM - [\mM]_{\rm{dg}})] 
= \mZ\frac{\partial(\mP \circ \mW)}{\partial \mW(l,k)}(\mM - [\mM]_{\rm{dg}}) + \mZ(\mP \circ \mW)\frac{\partial(\mM - [\mM]_{\rm{dg}})}{\partial \mW(l,k)} 
= \mZ\mP(l,k)\mathbf{e}_l\mathbf{e}_k^\top(\mM - [\mM]_{\rm{dg}}) + \mZ(\mP \circ \mW)\left(\frac{\partial \mM}{\partial \mW(l,k)} - \left[\frac{\partial \mM}{\partial \mW(l,k)}\right]_{\rm{dg}}\right) ,
\end{dmath*}
Fourth term (inside the factor 2):
\begin{dmath*}
    \frac{\partial}{\partial \mW(l,k)}[\mathbf{1}\mathbf{1}^\top [\mZ(\mP \circ \mW)(\mM - [\mM]_{\rm{dg}})]_{\rm{dg}}] = \mathbf{1}\mathbf{1}^\top \left[\frac{\partial}{\partial \mW(l,k)}[\mZ(\mP \circ \mW)(\mM - [\mM]_{\rm{dg}})]\right]_{\rm{dg}} ,
\end{dmath*}
Fifth term:
\begin{dmath*}
    \frac{\partial}{\partial \mW(l,k)}[(\mI - \mZ + \mathbf{1}\mathbf{1}^\top [\mZ]_{\rm{dg}})[\mM^{(2)}]_{\rm{dg}}] = (\mI - \mZ + \mathbf{1}\mathbf{1}^\top [\mZ]_{\rm{dg}})\frac{\partial [\mM^{(2)}]_{\rm{dg}}}{\partial \mW(l,k)}.
\end{dmath*}
Combining all terms yields the first result.

We now turn to the explicit solution for the diagonal derivatives.
From Theorem~\ref{th: second momemt}, the diagonal elements of $\mM^{(2)}$ satisfy:
\begin{dmath*}
    \mM^{(2)}(i,i) = \frac{\vpi(\mP \circ \mW^{(2)})\mathbf{1} + 2[\vpi(\mP \circ \mW)(\mM - [\mM]_{\rm{dg}})]_i}{\vpi(i)} .
\end{dmath*}
Taking the derivative with respect to $\mW(l,k)$:
\begin{dmath*}
\frac{\partial \mM^{(2)}(i,i)}{\partial \mW(l,k)} = \frac{1}{\vpi(i)}\left[\frac{\partial[\vpi(\mP \circ \mW^{(2)})\mathbf{1}]}{\partial \mW(l,k)} + 2\frac{\partial[\vpi(\mP \circ \mW)(\mM - [\mM]_{\rm{dg}})]_i}{\partial \mW(l,k)}\right] .
\end{dmath*}
For the first term, using $\frac{\partial(\mP \circ \mW^{(2)})}{\partial \mW(l,k)} = 2\mW(l,k)\mP(l,k)\mathbf{e}_l\mathbf{e}_k^\top$:
\begin{dmath*}
    \frac{\partial[\vpi(\mP \circ \mW^{(2)})\mathbf{1}]}{\partial \mW(l,k)} = \vpi \cdot 2\mW(l,k)\mP(l,k)\mathbf{e}_l\mathbf{e}_k^\top\mathbf{1} = 2\mW(l,k)\vpi(l)\mP(l,k) .
\end{dmath*}
For the second term, using the product rule:
\begin{dmath*}
\frac{\partial[\vpi(\mP \circ \mW)(\mM - [\mM]_{\rm{dg}})]_i}{\partial \mW(l,k)} = \vpi(l)\mP(l,k)[\mM - [\mM]_{\rm{dg}}](i,k) + \left[\vpi(\mP \circ \mW)\frac{\partial(\mM - [\mM]_{\rm{dg}})}{\partial \mW(l,k)}\right]_i .
\end{dmath*}
Putting the above terms together provides an explicit formula for each diagonal element.
This concludes the proof.

\section{Gradients with respect to the transition matrix} \label{sec:gradp}

For a Markov chain $\mP$, the partial derivative $\frac{\partial \mP}{\partial \mP(l,k)}$ must satisfy the row-sum constraint. Specifically, for each row $i$:
$$\frac{\partial \mP}{\partial \mP(l,k)}\mathbf{1} = \mathbf{0}$$
This ensures that when $\mP(l,k)$ increases by $\delta$, the row sum $\sum_j \mP(l,j)$ remains equal to 1. For a principled way to deal with the row sum constraint in this partial derivative, refer to the study of \cite{franssen2025first}.

\begin{lemma}[Derivative of the Stationary Distribution]\label{lem:derivative_stationary}
Let $\mP$ be the transition matrix of an irreducible Markov chain with stationary distribution $\vpi$ and fundamental matrix $\mZ = (\mI - \mP + \mathbf{1}\vpi)^{-1}$, the derivative of the stationary distribution is:

\begin{equation*}\label{eq:dpi_dP}
\frac{\partial \vpi}{\partial \mP(l,k)} = \vpi \frac{\partial \mP}{\partial \mP(l,k)} \mZ ,
\end{equation*}
where $\frac{\partial \mP}{\partial \mP(l,k)}$ satisfies the row-sum constraint $\frac{\partial \mP}{\partial \mP(l,k)}\mathbf{1} = \mathbf{0}$.
\end{lemma}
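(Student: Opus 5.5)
The plan is to differentiate the two defining identities of $\vpi$, namely $\vpi \mP = \vpi$ and $\vpi \mathbf{1} = 1$, with respect to $\mP(l,k)$, where the perturbation direction $\mD := \frac{\partial \mP}{\partial \mP(l,k)}$ satisfies $\mD\mathbf{1} = \mathbf{0}$. Irreducibility makes $\mZ = (\mI - \mP + \Pi)^{-1}$ exist, and the stationary distribution is the unique solution of a nonsingular linear system whose coefficients are affine in $\mP$. I will therefore first argue that $\vpi$ is differentiable, and then identify the derivative.

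\textbf{Differentiability.} I will show that $\vpi$ is the unique solution of $\vpi(\mI - \mP + \mathbf{1}\vpi) = \vpi$. Since $\vpi\mathbf{1} = 1$, this can be rewritten as $\vpi(\mI - \mP + \Pi) = \mathbf{1}^\top$ after fixing a normalization. To avoid the circularity of $\Pi$ depending on $\vpi$, I will use the equivalent system $\vpi(\mI - \mP + \mathbf{1}\mathbf{1}^\top/n)$ or, more simply, replace one column of $\mI - \mP$ by $\mathbf{1}$. In either form, $\vpi$ solves a nonsingular linear system with coefficients affine in $\mP$. By Cramer's rule, $\vpi$ is then a rational function of $\mP$ with a nonvanishing denominator near the given $\mP$, and hence smooth.

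\textbf{Computing the derivative.} Write $\vpi' := \frac{\partial \vpi}{\partial \mP(l,k)}$.
\begin{itemize}
\item Differentiating $\vpi\mP = \vpi$ gives $\vpi'\mP + \vpi\mD = \vpi'$, that is, $\vpi'(\mI - \mP) = \vpi\mD$.
\item Differentiating $\vpi\mathbf{1} = 1$ gives $\vpi'\mathbf{1} = 0$, and therefore $\vpi'\Pi = (\vpi'\mathbf{1})\vpi = \mathbf{0}$.
\item Adding these two identities yields $\vpi'(\mI - \mP + \Pi) = \vpi\mD$.
\item Right-multiplying by $\mZ$ gives $\vpi' = \vpi\mD\mZ$, which is the claim.
\end{itemize}

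\textbf{Consistency check.} As a sanity check, $\vpi\mD\mZ\mathbf{1} = \vpi\mD\mathbf{1} = 0$. This uses $\mZ\mathbf{1} = \mathbf{1}$, which holds because $(\mI - \mP + \Pi)\mathbf{1} = \mathbf{1}$. So the formula is consistent with normalization, and the row-sum constraint on $\mD$ is exactly what keeps the perturbed matrix stochastic.

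\textbf{Main obstacle.} The computation itself is routine. The delicate point is the differentiability step, together with the interpretation of ``$\partial/\partial\mP(l,k)$'' as a directional derivative along an admissible $\mD$ with $\mD\mathbf{1} = \mathbf{0}$. The argument therefore needs to be phrased for an arbitrary direction $\mD$ of this type. Under that reading, the Cramer's-rule argument settles differentiability, following the framework of \cite{franssen2025first}.
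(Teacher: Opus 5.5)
Your proof is correct and is essentially the paper's. Both arguments differentiate $\vpi\mP=\vpi$ and $\vpi\mathbf{1}=1$ along a zero-row-sum direction, and your step of adding $\vpi'\Pi=\mathbf{0}$ before right-multiplying by $\mZ$ is the same computation as the paper's right-multiplication by $\mZ$ followed by $(\mI-\mP)\mZ=\mI-\Pi$ and $\vpi'\mathbf{1}=0$.

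Your extra differentiability step, which the paper omits, is sound in its column-replacement or $\mI-\mP+\mathbf{1}\mathbf{1}^\top/n$ form. Two intermediate claims are off, however. First, $\vpi(\mI-\mP+\Pi)=\vpi$, not $\mathbf{1}^\top$; the right-hand side $\mathbf{1}^\top$ belongs to $\mI-\mP+\mathbf{1}\mathbf{1}^\top$. Second, $\vx(\mI-\mP+\mathbf{1}\vx)=\vx$ does not single out $\vpi$, since $\vx=\mathbf{0}$ also solves it.
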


\begin{proof}{Proof:}
The stationary distribution satisfies $\vpi\mP = \vpi$ and $\vpi\mathbf{1} = 1$.

Taking the derivative of $\vpi\mP = \vpi$ with respect to $\mP(l,k)$, using the product rule:
$$\frac{\partial \vpi}{\partial \mP(l,k)}\mP + \vpi\frac{\partial \mP}{\partial \mP(l,k)} = \frac{\partial \vpi}{\partial \mP(l,k)}$$

Rearranging:
$$\frac{\partial \vpi}{\partial \mP(l,k)}(\mI - \mP) = \vpi\frac{\partial \mP}{\partial \mP(l,k)}$$

Multiplying both sides by $\mZ$ from the right:
$$\frac{\partial \vpi}{\partial \mP(l,k)}(\mI - \mP)\mZ = \vpi\frac{\partial \mP}{\partial \mP(l,k)}\mZ$$

Using the property $(\mI - \mP)\mZ = \mI - \mathbf{1}\vpi$ (See \cite{kemeny1969finite}, p. 100):
$$\frac{\partial \vpi}{\partial \mP(l,k)}(\mI - \mathbf{1}\vpi) = \vpi\frac{\partial \mP}{\partial \mP(l,k)}\mZ$$

Expanding the left side:
$$\frac{\partial \vpi}{\partial \mP(l,k)} - \frac{\partial \vpi}{\partial \mP(l,k)}\mathbf{1}\vpi = \vpi\frac{\partial \mP}{\partial \mP(l,k)}\mZ$$

Now, from the normalization constraint $\vpi\mathbf{1} = 1$, taking derivatives:
$$\frac{\partial \vpi}{\partial \mP(l,k)}\mathbf{1} = 0$$

This is automatically satisfied because:
$$\frac{\partial \vpi}{\partial \mP(l,k)}\mathbf{1} = \vpi\frac{\partial \mP}{\partial \mP(l,k)}\mZ\mathbf{1} = \vpi\frac{\partial \mP}{\partial \mP(l,k)}\mathbf{1} = \vpi \cdot \mathbf{0} = 0$$

where we used $\mZ\mathbf{1} = \mathbf{1}$ \citep{kemeny1969finite} and $\frac{\partial \mP}{\partial \mP(l,k)}\mathbf{1} = \mathbf{0}$.

Therefore, the first term simplifies:
$$\frac{\partial \vpi}{\partial \mP(l,k)} - 0 = \vpi\frac{\partial \mP}{\partial \mP(l,k)}\mZ .$$
Hence:
$$\frac{\partial \vpi}{\partial \mP(l,k)} = \vpi\frac{\partial \mP}{\partial \mP(l,k)}\mZ .$$
\end{proof}

\begin{lemma}[Derivative of the Fundamental Matrix]\label{lem:derivative_fundamental}
The derivative of the fundamental matrix $\mZ = (\mI - \mP + \mathbf{1}\vpi)^{-1}$ with respect to $\mP(l,k)$ is:
\begin{equation*}\label{eq:dZ_dP}
\frac{\partial \mZ}{\partial \mP(l,k)} = \mZ\left(\frac{\partial \mP}{\partial \mP(l,k)} - \mathbf{1}\frac{\partial \vpi}{\partial \mP(l,k)}\right)\mZ,
\end{equation*}
where $\frac{\partial \mP}{\partial \mP(l,k)}$ is the feasible perturbation that satisfies the row sum constraints and $\frac{\partial \vpi}{\partial \mP(l,k)}$ is given by Lemma~\ref{lem:derivative_stationary}.
\end{lemma}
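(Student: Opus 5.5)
Differentiate the defining identity of $\mZ$ directly, rather than its series representation. Write $\mA := \mI - \mP + \mathbf{1}\vpi$, so that $\mZ = \mA^{-1}$, and recall that $\mA$ is invertible for irreducible $\mP$, as noted after \eqref{eq:ks}.

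First, differentiate $\mA\mZ = \mI$ with respect to $\mP(l,k)$ using the product rule. This gives $\frac{\partial \mA}{\partial \mP(l,k)}\mZ + \mA\frac{\partial \mZ}{\partial \mP(l,k)} = 0$. Premultiplying by $\mZ = \mA^{-1}$ yields the standard inverse-derivative formula
\[
\frac{\partial \mZ}{\partial \mP(l,k)} = -\mZ\,\frac{\partial \mA}{\partial \mP(l,k)}\,\mZ .
\]

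Second, compute $\partial\mA/\partial \mP(l,k)$. The identity is constant, so it drops out. The term $-\mP$ contributes $-\partial\mP/\partial\mP(l,k)$, where this is the feasible perturbation satisfying $\frac{\partial \mP}{\partial \mP(l,k)}\mathbf{1}=\mathbf{0}$, exactly as in the preamble to Lemma~\ref{lem:derivative_stationary}. The rank-one term $\mathbf{1}\vpi$ contributes $\mathbf{1}\,\partial\vpi/\partial\mP(l,k)$, since $\mathbf{1}$ is constant. Hence
\[
\frac{\partial \mA}{\partial \mP(l,k)} = -\frac{\partial \mP}{\partial \mP(l,k)} + \mathbf{1}\frac{\partial \vpi}{\partial \mP(l,k)} .
\]
Substituting this into the inverse-derivative formula cancels the minus sign and gives the claimed expression. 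The derivative of $\vpi$ is supplied by Lemma~\ref{lem:derivative_stationary}.

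The only subtle point is justifying differentiability, namely that $\vpi$ is a differentiable function of $\mP$ along feasible directions. Lemma~\ref{lem:derivative_stationary} already presupposes this. If needed, I would justify it by the implicit function theorem: $\vpi$ is the unique solution of the nonsingular linear system $\vpi\mA = \vpi\,$... or, more directly, of $\vpi(\mI-\mP)=0$, $\vpi\mathbf{1}=1$, whose coefficient matrix has full rank under irreducibility. Its solution is therefore a rational, and hence smooth, function of the entries of $\mP$.

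As an optional check, I would note that $\frac{\partial\vpi}{\partial\mP(l,k)}\mathbf{1}=0$ and $\mZ\mathbf{1}=\mathbf{1}$ imply $\frac{\partial \mZ}{\partial \mP(l,k)}\mathbf{1}=\mZ\bigl(\frac{\partial \mP}{\partial \mP(l,k)}\mathbf{1}-\mathbf{1}\cdot 0\bigr)=\mathbf{0}$. This is consistent with $\mZ\mathbf{1}=\mathbf{1}$ holding identically in $\mP$.
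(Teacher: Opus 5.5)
Your proposal is correct and follows essentially the same route as the paper: apply the inverse-derivative identity $\partial \mA^{-1} = -\mA^{-1}(\partial \mA)\mA^{-1}$ to $\mA = \mI - \mP + \mathbf{1}\vpi$ and use $\partial \mA/\partial \mP(l,k) = -\partial \mP/\partial \mP(l,k) + \mathbf{1}\,\partial \vpi/\partial \mP(l,k)$. The differences are minor: you derive that identity from $\mA\mZ = \mI$ instead of citing Magnus--Neudecker, and you add a differentiability argument for $\vpi$ and the consistency check $\frac{\partial \mZ}{\partial \mP(l,k)}\mathbf{1} = \mathbf{0}$, neither of which the paper includes.
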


\begin{proof}{Proof:}
We use the matrix inverse derivative formula \cite{magnus2019matrix}.
For an invertible matrix $\mX(\theta)$ that depends on parameter $\theta$:
$$\frac{\partial \mX^{-1}}{\partial \theta} = -\mX^{-1}\frac{\partial \mX}{\partial \theta}\mX^{-1} .$$
Applied to $\mZ = (\mI - \mP + \mathbf{1}\vpi)^{-1}$:
$$\frac{\partial \mZ}{\partial \mP(l,k)} = -\mZ\frac{\partial(\mI - \mP + \mathbf{1}\vpi)}{\partial \mP(l,k)}\mZ .$$
Computing the derivative inside:
\begin{align*}
\frac{\partial(\mI - \mP + \mathbf{1}\vpi)}{\partial \mP(l,k)} 
&= \frac{\partial \mI}{\partial \mP(l,k)} - \frac{\partial \mP}{\partial \mP(l,k)} + \frac{\partial(\mathbf{1}\vpi)}{\partial \mP(l,k)} \\
&= 0 - \frac{\partial \mP}{\partial \mP(l,k)} + \mathbf{1}\frac{\partial \vpi}{\partial \mP(l,k)} \\
&= -\frac{\partial \mP}{\partial \mP(l,k)} + \mathbf{1}\frac{\partial \vpi}{\partial \mP(l,k)},
\end{align*}
and substituting back
\begin{align*}
\frac{\partial \mZ}{\partial \mP(l,k)} 
&= -\mZ\left(-\frac{\partial \mP}{\partial \mP(l,k)} + \mathbf{1}\frac{\partial \vpi}{\partial \mP(l,k)}\right)\mZ \\
&= \mZ\left(\frac{\partial \mP}{\partial \mP(l,k)} - \mathbf{1}\frac{\partial \vpi}{\partial \mP(l,k)}\right)\mZ .
\end{align*}
\end{proof}

\begin{theorem}[Derivative of Mean first passage time Weights with respect to $ \mP$]\label{thm:derivative_MT_wrt_P}
Provided that (A) holds, the partial derivative of the mean first passage weight with respect to the transition probability $ \mP ( l , k ) $ is given by
\begin{dmath*}
\frac{\partial \mM}{\partial \mP(l,k)} 
= \Bigg[
\frac{\partial \mZ}{\partial \mP(l,k)} 
\Big(
(\mP \circ \mW)\mathbf{1}\vpi 
- \mathbf{1}\mathbf{1}^\top [(\mP \circ \mW)\mathbf{1}\vpi]_{\rm{dg}} 
+ C(\mI - \mZ + \mathbf{1}\mathbf{1}^\top [\mZ]_{\rm{dg}})
\Big)
+ \mZ\Big(
\frac{\partial(\mP \circ \mW)}{\partial \mP(l,k)}\mathbf{1}\vpi 
+ (\mP \circ \mW)\mathbf{1}\frac{\partial \vpi}{\partial \mP(l,k)} 
- \mathbf{1}\mathbf{1}^\top
\Big[
\frac{\partial(\mP \circ \mW)}{\partial \mP(l,k)}\mathbf{1}\vpi 
+ (\mP \circ \mW)\mathbf{1}\frac{\partial \vpi}{\partial \mP(l,k)}
\Big]_{\rm{dg}}
\Big)
+ \frac{\partial C}{\partial \mP(l,k)}(\mI - \mZ + \mathbf{1}\mathbf{1}^\top [\mZ]_{\rm{dg}})
- C\frac{\partial \mZ}{\partial \mP(l,k)} 
+ C\mathbf{1}\mathbf{1}^\top 
\Big[\frac{\partial \mZ}{\partial \mP(l,k)}\Big]_{\rm{dg}}
\Bigg]\Xi^{-1}
- 
\Big[
\mZ(\mP \circ \mW)\mathbf{1}\vpi 
- \mathbf{1}\mathbf{1}^\top [\mZ(\mP \circ \mW)\mathbf{1}\vpi]_{\rm{dg}} 
+ C(\mI - \mZ + \mathbf{1}\mathbf{1}^\top [\mZ]_{\rm{dg}})
\Big]\Xi^{-1}
\frac{\partial \vpi}{\partial \mP(l,k)}\Xi^{-1} .
\end{dmath*}
\end{theorem}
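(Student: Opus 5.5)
The plan is to differentiate the closed-form expression for $\mM$ from Theorem~\ref{le:1} term by term, treating $\mP$ as the variable. We write it in the form used in the proof of Lemma~\ref{lem:derivative_mfpt},
\[
\mM = \mA\,\Xi^{-1}, \qquad \mA := \mZ(\mP \circ \mW)\mathbf{1}\vpi - \mathbf{1}\mathbf{1}^\top[\mZ(\mP \circ \mW)\mathbf{1}\vpi]_{\rm{dg}} + C(\mI - \mZ + \mathbf{1}\mathbf{1}^\top[\mZ]_{\rm{dg}}),
\]
with $C = \vpi(\mP\circ\mW)\mathbf{1}$. Here every ingredient depends on $\mP$: $\mZ$, $\vpi$ (hence $\Xi$ and $C$), and $\mP\circ\mW$. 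Note that $\mW$ itself is held fixed. Throughout, $\partial\mP/\partial\mP(l,k)$ denotes the feasible, row-sum-preserving perturbation from the preamble of this appendix, so that Lemma~\ref{lem:derivative_stationary} and Lemma~\ref{lem:derivative_fundamental} apply verbatim.

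\emph{Step 1 (outer product rule).} Apply the product rule to $\mM = \mA\Xi^{-1}$, which gives $\partial\mM = (\partial\mA)\Xi^{-1} + \mA\,\partial(\Xi^{-1})$. Since $\Xi = {\rm dg}(\vpi)$, the inverse-derivative formula used in the proof of Lemma~\ref{lem:derivative_fundamental} yields
\[
\partial(\Xi^{-1}) = -\Xi^{-1}\,{\rm dg}(\partial\vpi)\,\Xi^{-1}.
\]
This produces the final subtracted term of the statement, with $\partial\vpi/\partial\mP(l,k)$ read as the diagonal matrix ${\rm dg}(\partial\vpi/\partial\mP(l,k))$.

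\emph{Step 2 (inner derivative $\partial\mA$).} We handle the three summands of $\mA$ separately.
\begin{itemize}
\item For the first summand, the product rule gives $\partial\mZ\,(\mP\circ\mW)\mathbf{1}\vpi + \mZ\,\partial(\mP\circ\mW)\mathbf{1}\vpi + \mZ(\mP\circ\mW)\mathbf{1}\,\partial\vpi$. Here $\partial(\mP\circ\mW) = (\partial\mP)\circ\mW$.
\item For the second summand, $[\cdot]_{\rm{dg}}$ is linear and so commutes with differentiation. Its derivative is therefore $-\mathbf{1}\mathbf{1}^\top[\partial(\mZ(\mP\circ\mW)\mathbf{1}\vpi)]_{\rm{dg}}$, which splits into the same three pieces.
\item For the third summand, we obtain $\partial C\,(\mI-\mZ+\mathbf{1}\mathbf{1}^\top[\mZ]_{\rm{dg}}) - C\,\partial\mZ + C\mathbf{1}\mathbf{1}^\top[\partial\mZ]_{\rm{dg}}$. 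Here
\[
\partial C = \partial\vpi\,(\mP\circ\mW)\mathbf{1} + \vpi\,((\partial\mP)\circ\mW)\mathbf{1},
\]
which is kept abstract as $\partial C/\partial\mP(l,k)$.
\end{itemize}
Substituting $\partial\vpi$ and $\partial\mZ$ from Lemmas~\ref{lem:derivative_stationary} and~\ref{lem:derivative_fundamental} makes every term explicit.

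\emph{Step 3 (regrouping).} We collect all terms carrying a leading $\partial\mZ/\partial\mP(l,k)$ into a single bracket, and gather the $\mZ(\ldots)$ terms and the $C$-terms. This reproduces the displayed grouping.

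The main obstacle is bookkeeping rather than any conceptual step. The $\partial\mZ$ contributions coming from the diagonal operator, namely $-\mathbf{1}\mathbf{1}^\top[\partial\mZ\,(\mP\circ\mW)\mathbf{1}\vpi]_{\rm{dg}}$, do not factor as a left multiplication by $\partial\mZ$. So the first bracket of the statement must be interpreted with $[\cdot]_{\rm{dg}}$ applied after multiplication, and I would check carefully that the regrouping respects this. The Step 1 term also needs the same care: it must be read with $\partial\vpi$ as a diagonal matrix and not a row vector.
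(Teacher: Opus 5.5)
Your Steps 1 and 2 are correct and match the paper's own sketched argument. You write $\mM=\mF\,\Xi^{-1}$, use $\partial(\Xi^{-1})=-\Xi^{-1}{\rm dg}(\partial\vpi)\,\Xi^{-1}$, and differentiate the three summands of $\mF$ via Lemmas~\ref{lem:derivative_stationary} and~\ref{lem:derivative_fundamental}. The gap is Step 3: no regrouping of what you obtain reproduces the display, because the display is not the derivative. Write $\partial$ for $\partial/\partial\mP(l,k)$ and $\mathbf{B}:=(\mP\circ\mW)\mathbf{1}\vpi$. Your Step 2 gives
\begin{align*}
\partial\mF={}&\partial\mZ\,\mathbf{B}+\mZ\,\partial\mathbf{B}-\mathbf{1}\mathbf{1}^\top\big[\partial\mZ\,\mathbf{B}+\mZ\,\partial\mathbf{B}\big]_{\rm{dg}}\\
&+\partial C\,(\mI-\mZ+\mathbf{1}\mathbf{1}^\top[\mZ]_{\rm{dg}})-C\,\partial\mZ+C\,\mathbf{1}\mathbf{1}^\top[\partial\mZ]_{\rm{dg}}.
\end{align*}
Since $\mZ\mathbf{1}=\mathbf{1}$ for every $\mP$, we have $\partial\mZ\,\mathbf{1}=0$. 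Using this, the statement's big bracket (the one multiplying $\Xi^{-1}$) minus $\partial\mF$ equals
\[
C\,\partial\mZ\,(\mI-\mZ)+\mathbf{1}\mathbf{1}^\top\big[\partial\mZ\,\mathbf{B}+(\mZ-\mI)\,\partial\mathbf{B}\big]_{\rm{dg}} .
\]
The second piece comes from pulling $\partial\mZ$ and $\mZ$ out of $\mathbf{1}\mathbf{1}^\top[\cdot]_{\rm{dg}}$. You noticed the $\partial\mZ$ instance; the second bracket has the same defect, since $\mZ\,\mathbf{1}\mathbf{1}^\top[\partial\mathbf{B}]_{\rm{dg}}=\mathbf{1}\mathbf{1}^\top[\partial\mathbf{B}]_{\rm{dg}}$, which in general differs from $\mathbf{1}\mathbf{1}^\top[\mZ\,\partial\mathbf{B}]_{\rm{dg}}$. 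The first piece is worse and survives any reading of the notation. The statement's first bracket contains $C\,\partial\mZ\,(\mI-\mZ+\mathbf{1}\mathbf{1}^\top[\mZ]_{\rm{dg}})$. This double-counts the $\mZ$-dependence of the $C$-summand, which is already listed as $-C\,\partial\mZ+C\,\mathbf{1}\mathbf{1}^\top[\partial\mZ]_{\rm{dg}}$.

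A two-state check makes this concrete. Take $\mP=\begin{pmatrix}1-a&a\\ b&1-b\end{pmatrix}$ with $a=b=\tfrac14$ and $\mW=\mathbf{1}\mathbf{1}^\top$, so that $\mM=\mL$ with $\mL(1,2)=1/a$, $\mL(2,1)=1/b$, $\mL(1,1)=(a+b)/b$ and $\mL(2,2)=(a+b)/a$. Use the feasible direction $\partial\mP=\mathbf{e}_1(\mathbf{e}_2-\mathbf{e}_1)^\top$. Then $\vpi=(\tfrac12,\tfrac12)$, $\partial\vpi=(-1,1)$, $\partial\mZ=\begin{pmatrix}-1&1\\ 3&-3\end{pmatrix}$, $C=1$ and $\partial C=0$, and both $\partial\mZ\,\mathbf{B}$ and $(\mZ-\mI)\partial\mathbf{B}$ vanish. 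Differentiating $\mL$ directly gives $\begin{pmatrix}4&-16\\ 0&-4\end{pmatrix}$, which is exactly what your Steps 1--2 produce. The displayed formula instead gives $\begin{pmatrix}6&-18\\ -6&2\end{pmatrix}$, and the difference is exactly $C\,\partial\mZ\,(\mI-\mZ)\,\Xi^{-1}$.

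So your argument proves the corrected identity $\partial\mM=\partial\mF\,\Xi^{-1}-\mF\,\Xi^{-1}{\rm dg}(\partial\vpi)\,\Xi^{-1}$, with $\partial\mF$ as above. You should state that, rather than claim that regrouping recovers the display. The paper's proof never carries out its ``combining'' step, and that is where the error sits.
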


\begin{proof}{Proof:}
Write $\mM = \mF[\mZ, \vpi, \mP, \mW] \cdot \Xi^{-1}[\vpi]$, where $\mF$ denotes the bracketed expression. The formula for $\mM$ depends on $\mP$ both directly through $(\mP \circ \mW)$ and indirectly through $\mZ$, $\vpi$, $C = \vpi(\mP \circ \mW)\mathbf{1}$, and $\Xi^{-1}$. Applying the product rule:
$$\frac{\partial \mM}{\partial \mP(l,k)} = \frac{\partial \mF}{\partial \mP(l,k)} \Xi^{-1} 
+ \mF \frac{\partial \Xi^{-1}}{\partial \mP(l,k)} .$$
For the second term, since $\Xi^{-1} = \text{diag}(1/\vpi(1), \ldots, 1/\vpi(n))$:
$$\frac{\partial \Xi^{-1}}{\partial \mP(l,k)} = -\Xi^{-1}\frac{\partial \vpi}{\partial \mP(l,k)}\Xi^{-1} .$$
For $\frac{\partial \mF}{\partial \mP(l,k)}$, we differentiate each of the three terms in $\mF$ using the product rule, propagating derivatives through all $\mP$-dependent quantities. Combining these with the component derivatives from Lemma~\ref{lem:derivative_stationary} and Lemma~\ref{lem:derivative_fundamental} establishes the claim.
\end{proof}

\section{Relation to Taboo Kernel}\label{sec:taboo}

A standard alternative for computing MFPTs is the following.
Let $ _{i} \mP $ denote $ \mP$ with the $ i$th column set to zero. Then, $ _{i} \mP $ is a deficient Markov transition matrix that models the $ \mP$ chain that never enters state $ i$.
By simple algebra, it can be shown that the expected number of steps the random walker takes going from $ i $ back to $ j $ is given by
\[
\mM ( i , j ) = \left ( \sum_{k=0}^{\infty} 
\left ( _{j} \mP \right)^k  \right ) (  i, j )  - 1 .
\]
Since any irreducible finite Markov chain is positive recurrent, the sum on the above right-hand side is finite. It is easily checked that 
\begin{dmath*}
 \left ( \mI - _{j} \! \! \mP \right ) 
\left ( \sum_{k=0}^{\infty} 
\left ( _{j} \mP \right)^k  \right )
=
\sum_{k=0}^{\infty} 
\left ( _{j} \mP \right)^k
=
\left ( \sum_{k=0}^{\infty} 
\left ( _{j} \mP \right)^k  \right ) \left ( \mI - _{j} \! \! \mP  \right )
.
\end{dmath*}
Hence, we can solve it by
\[
\sum_{k=0}^{\infty} \left ( _{j} \mP  \right )^k =
( \mI -  _{j} \! \! \mP )^{-1} .
\]
Unfortunately, this requires inversions of the $n \times n $ matrix to compute all elements of $ \mM$, making this method infeasible to compute $ \mM$.

It is worth noting that \cite{borkar2012hamiltonian} uses this approach and computes the element $ ( i , i ) $ for the VFPT matrix for some fixed node $i$. 
They then show that if this variance approaches zero, $ \mP$ models a Hamiltonian cycle. The argument being that if $ \mP$ is irreducible and VFPT $( i , i ) =0 $ for one specific node $i$, then this deterministic path has to visit all nodes. 
\end{appendices}

\end{document}